\renewcommand{\leq}{\leqslant}
\renewcommand{\geq}{\geqslant}
\renewcommand{\setminus}{\smallsetminus}
\renewcommand{\gamma}{\upgamma}
\renewcommand{\pi}{\uppi}
\newcommand{\e}{\varepsilon}
\newcommand{\R}{\mathbb R}
\newtheorem{theorem}{Theorem}
\newtheorem{lemma}[theorem]{Lemma}
\newtheorem{proposition}[theorem]{Proposition}
\newtheorem{corollary}[theorem]{Corollary}
\theoremstyle{remark}
\newtheorem{remark}[theorem]{Remark}
\renewcommand{\tau}{\uptau}
\renewcommand{\xi}{\upxi}
\renewcommand{\rho}{\uprho}
\renewcommand{\subset}{\subseteq}
\newcommand{\C}{\mathbb C}
\newcommand{\N}{\mathbb N}
\newcommand{\eqdef}{\stackrel{\mathrm{def}}{=}}
\renewcommand{\theta}{\uptheta}
\renewcommand{\lambda}{\uplambda}
\renewcommand{\gamma}{\upgamma}
\renewcommand{\beta}{\upbeta}
\renewcommand{\alpha}{\upalpha}
\renewcommand{\kappa}{\upkappa}
\renewcommand{\psi}{\uppsi}
\renewcommand{\rho}{\uprho}
\renewcommand{\delta}{\updelta}
\renewcommand{\pi}{\uppi}
\renewcommand{\omega}{\upomega}
\renewcommand{\sigma}{\upsigma}
\renewcommand{\eta}{\upeta}
\renewcommand{\kappa}{\upkappa}
\renewcommand{\mu}{\upmu}
\renewcommand{\nu}{\upnu}
\renewcommand{\pi}{\uppi}
\renewcommand{\zeta}{\upzeta}
\newcommand{\mb}{\mathbb}
\newcommand*\diff{\mathop{}\!\mathrm{d}}
\newcommand{\ms}{\mathscr}
\newcommand{\msf}{\mathsf}
\begin{document}

\title{Polynomial inequalities on the Hamming cube}

\author{Alexandros Eskenazis}
\address{(A.~E.) Department of Mathematics\\ Princeton University\\ Princeton, NJ 08544-1000, USA}
\email{ae3@math.princeton.edu}

\author{Paata Ivanisvili}
\address{(P.~I.) Department of Mathematics\\ University of California, Irvine\\ CA 92617, USA}
\email{pivanisv@uci.edu}

\thanks{P.~I.~ was partially supported by NSF DMS-1856486 and NSF CAREER-1945102. This work was carried out under the auspices of the Simons Algorithms and Geometry (A\&G) Think Tank.}

\vspace{-0.25in}

\begin{abstract} 
Let $(X,\|\cdot\|_X)$ be a Banach space. The purpose of this article is to systematically investigate dimension independent properties of vector valued functions $f:\{-1,1\}^n\to X$ on the Hamming cube whose spectrum is bounded above or below. Our proofs exploit contractivity properties of the heat flow, induced by the geometry of the target space $(X,\|\cdot\|_X)$, combined with duality arguments and suitable tools from approximation theory and complex analysis. We obtain a series of improvements of various well-studied estimates for functions with bounded spectrum, including moment comparison results for low degree Walsh polynomials and Bernstein--Markov type inequalities, which constitute discrete vector valued analogues of Freud's inequality in Gauss space (1971). Many of these inequalities are new even for scalar valued functions. Furthermore, we provide a short proof of Mendel and Naor's heat smoothing theorem (2014) for functions in tail spaces with values in spaces of nontrivial type and we also prove a dual lower bound on the decay of the heat semigroup acting on functions with spectrum bounded from above. Finally, we improve the reverse Bernstein--Markov inequalities of Meyer (1984) and Mendel and Naor (2014) for functions with narrow enough spectrum and improve the bounds of Filmus, Hatami, Keller and Lifshitz (2016) on the $\ell_p$ sums of influences of bounded functions for $p\in\big(1,\frac{4}{3}\big)$.
\end{abstract}

\maketitle

{\footnotesize
\noindent {\em 2010 Mathematics Subject Classification.} Primary: 42C10; Secondary: 41A17, 41A63, 46B07.

\noindent {\em Key words.} Hamming cube, heat semigroup, hypercontractivity, Bernstein--Markov inequality, moment comparison.}

\section{Introduction}

Fix $n\in\N$ and let $(X,\|\cdot\|_X)$ be a Banach space. If $p\in[1,\infty)$, the vector valued $L_p$ norm of a function $f:\{-1,1\}^n\to X$ is defined as
\begin{equation}
\|f\|_{L_p(\{-1,1\}^n;X)} \eqdef \Big( \frac{1}{2^n} \sum_{\e\in\{-1,1\}^n} \|f(\e)\|_X^p\Big)^{1/p}.
\end{equation}
As usual, we denote $\|f\|_{L_\infty(\{-1,1\}^n;X)} \eqdef \max_{\e\in\{-1,1\}^n} \|f(\e)\|_X$.  For a subset $A\subseteq\{1,\ldots,n\}$ the Walsh function $w_A:\{-1,1\}^n \to \{-1,1\}$ is the Boolean function given by $w_A(\e) = \prod_{i\in A}\e_i$, where $\e=(\e_1,\ldots,\e_n)\in\{-1,1\}^n$. Every function $f:\{-1,1\}^n \to X$ admits an expansion of the form
\begin{equation}
f = \sum_{A\subseteq\{1,\ldots,n\}} \widehat{f}(A) w_A,
\end{equation}
where
\begin{equation}
\widehat{f}(A) \eqdef \frac{1}{2^n} \sum_{\delta\in\{-1,1\}^n} f(\delta) w_A(\delta) \in X.
\end{equation}
For $i\in\{1,\ldots,n\}$ the $i$-th partial derivative of such a function $f$ is given by
\begin{equation}
\partial_i f(\e) \eqdef \frac{f(\e)-f(\e_1,\ldots,\e_{i-1},-\e_i,\e_{i+1},\ldots,\e_n)}{2} = \sum_{\substack{A\subseteq\{1,\ldots,n\} \\ i\in A}} \widehat{f}(A) w_A(\e)
\end{equation}
and satisfies $\partial_i^2 f = \partial_i f$. Therefore, the hypercube Laplacian of $f$ is defined as
\begin{equation}
\Delta f \eqdef \sum_{i=1}^n \partial_i f = \sum_{A\subseteq\{1,\ldots,n\}} |A|\widehat{f}(A) w_A.
\end{equation}
Finally, the action of the discrete heat semigroup $\{e^{-t\Delta}\}_{t\geq0}$ on the function $f$ is given by
\begin{equation}
\forall \ t\geq0, \ \ \ e^{-t\Delta} f \eqdef \sum_{A\subseteq\{1,\ldots,n\}} e^{-t|A|}\widehat{f}(A) w_A.
\end{equation}
A straightforward calculation shows that the heat semigroup can equivalently be expressed as
\begin{equation} \label{eq:heatrepresent}
e^{-t\Delta}f(\e) =\frac{1}{2^n} \sum_{\delta\in\{-1,1\}^n} \sum_{B\subseteq\{1\ldots,n\}} e^{-t|B|}(1-e^{-t})^{n-|B|} f\Big(\sum_{j\in B} \e_j e_j + \sum_{j\in \{1,\ldots,n\}\setminus B} \delta_j e_j\Big),
\end{equation}
where $\{e_1,\ldots,e_n\}$ is the orthonormal basis of $\R^n$, which by convexity implies that for every $t\geq0$ and $p\in[1,\infty]$, $\|e^{-t\Delta}f\|_{L_p(\{-1,1\}^n;X)} \leq \|f\|_{L_p(\{-1,1\}^n;X)}$. Identity \eqref{eq:heatrepresent} also has a useful probabilistic interpretation. For a fixed $\e\in\{-1,1\}^n$ and $t\geq0$, consider a random vector $\eta\in\{-1,1\}^n$ such that each coordinate $\eta_i$ is chosen independently to coincide with $\e_i$ with probability $\frac{1+e^{-t}}{2}$ and with $-\e_i$ with probability $\frac{1-e^{-t}}{2}$. Then, the value $e^{-t\Delta}f(\e)$ is the expectation of the random vector $f(\eta)$.

The main purpose of this paper is to investigate finer contractivity properties of the heat semigroup under suitable assumptions on the spectrum of the function $f$ and the geometry of the target space $(X,\|\cdot\|_X)$. The common feature in the proofs of most of the following results is a duality argument inspired by classical work of Figiel (see \cite[Theorem~14.6]{MS86}), which allows the self-improvement of contractivity properties of the heat semigroup relying on suitable inequalities from classical approximation theory and complex analysis. All Banach spaces in the ensuing discussion will be assumed to be over the field of complex numbers.

In the rest of the introduction, we proceed to describe our results in decreasing order of generality. In Section \ref{subsec:1.1}, we present estimates for functions with spectrum bounded from above and values in a general Banach space. In Section \ref{subsec:1.2}, we improve the bounds of Section \ref{subsec:1.1} under the additional assumption that the target space $(X,\|\cdot\|_X)$ is $K$-convex (we postpone the relevant definitions until Section 1.2). We also present a new proof of a theorem of Mendel and Naor \cite{MN14} related to the heat smoothing conjecture. Finally, in Section \ref{subsec:1.3} we present explicit estimates which hold true for scalar valued functions. These include various Bernstein--Markov type inequalities and their reverses, estimates on the influences of bounded functions and moment comparison results.


\subsection{Estimates for a general Banach space} \label{subsec:1.1} Fix $n\in\N$ and let $(X,\|\cdot\|_X)$ be a Banach space. If $d\in\{0,1\ldots,n\}$, we say that a function $f:\{-1,1\}^n\to X$ has degree at most $d$ if $\widehat{f}(A)=0$ when $|A|>d$ and we say that $f$ belongs in the $d$-th tail space if $\widehat{f}(A)=0$ when $|A|<d$. Finally, we say that $f$ is $d$-homogeneous if $\widehat{f}(A)=0$ when $|A|\neq d$.

\subsubsection{A lower bound on the decay of the heat semigroup} The following theorem establishes a lower bound on the decay of the heat semigroup acting on functions of low degree.

\begin{theorem} \label{thm:reverseheatgeneral}
Fix $n,d\in\N$ with $d\in\{1,\ldots,n\}$ and let $(X,\|\cdot\|_X)$ be a Banach space. For every $p\in[1,\infty]$ and every function $f:\{-1,1\}^n \to X$ of degree at most $d$, we have
\begin{equation} \label{eq:reverseheatgeneral}
\forall \ t\geq0, \ \ \ \|e^{-t\Delta}f\|_{L_p(\{-1,1\}^n;X)} \geq \frac{1}{T_d(e^t)} \|f\|_{L_p(\{-1,1\}^n;X)},
\end{equation}
where $T_d$ is the $d$-th Chebyshev polynomial of the first kind.
\end{theorem}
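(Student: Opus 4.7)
My plan is to reduce \eqref{eq:reverseheatgeneral} to the classical Chebyshev extremal inequality for real polynomials via a Hahn--Banach duality argument. For each $k\in\{0,1,\ldots,d\}$, write $f_k\eqdef \sum_{|A|=k}\widehat{f}(A)w_A$ so that $f=\sum_{k=0}^d f_k$, and introduce the $L_p(\{-1,1\}^n;X)$-valued polynomial
\begin{equation*}
\phi(z) \eqdef \sum_{k=0}^d z^k f_k,\qquad z\in\C,
\end{equation*}
of degree at most $d$. By construction $\phi(1)=f$ and $\phi(e^{-t})=e^{-t\Delta}f$, so the goal becomes to bound $\|\phi(1)\|_{L_p(\{-1,1\}^n;X)}$ in terms of $\|\phi(e^{-t})\|_{L_p(\{-1,1\}^n;X)}$.

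The first step is to prove the pointwise norm bound
\begin{equation}\label{eq:plankey}
\forall\ z\in[-e^{-t},e^{-t}],\qquad \|\phi(z)\|_{L_p(\{-1,1\}^n;X)}\leq \|e^{-t\Delta}f\|_{L_p(\{-1,1\}^n;X)}.
\end{equation}
If $z=e^{-s}\in(0,e^{-t}]$, then $s\geq t$ and $\phi(z)=e^{-s\Delta}f=e^{-(s-t)\Delta}(e^{-t\Delta}f)$, so \eqref{eq:plankey} follows from the $L_p$ contractivity of the heat semigroup noted after \eqref{eq:heatrepresent}. If $z=-e^{-s}\in[-e^{-t},0)$, introduce $\widetilde f(\e)\eqdef f(-\e)$; the identity $w_A(-\e)=(-1)^{|A|}w_A(\e)$ gives $\widehat{\widetilde f}(A)=(-1)^{|A|}\widehat f(A)$, whence $\phi(-e^{-s})=e^{-s\Delta}\widetilde f$, and since $\|\widetilde g\|_{L_p(\{-1,1\}^n;X)}=\|g\|_{L_p(\{-1,1\}^n;X)}$ for any $g$, we obtain $\|\phi(z)\|_{L_p(\{-1,1\}^n;X)}\leq \|e^{-t\Delta}f\|_{L_p(\{-1,1\}^n;X)}$ by the same contractivity argument. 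The value $z=0$ is handled by Jensen's inequality applied to the constant function $\phi(0)=\widehat f(\varnothing)$, using that the heat semigroup preserves averages.

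Having established \eqref{eq:plankey}, rescale by setting $\psi(z)\eqdef \phi(e^{-t}z)$, which is an $L_p(\{-1,1\}^n;X)$-valued polynomial of degree at most $d$ with $\psi(e^t)=f$ and $\sup_{z\in[-1,1]}\|\psi(z)\|_{L_p(\{-1,1\}^n;X)}\leq \|e^{-t\Delta}f\|_{L_p(\{-1,1\}^n;X)}$. By Hahn--Banach, pick a norm-one linear functional $\Lambda$ on $L_p(\{-1,1\}^n;X)$ with $\Lambda(f)=\|f\|_{L_p(\{-1,1\}^n;X)}$. Then the scalar polynomial $P(z)\eqdef \Lambda(\psi(z))$ has degree at most $d$ and satisfies $\|P\|_{L_\infty[-1,1]}\leq \|e^{-t\Delta}f\|_{L_p(\{-1,1\}^n;X)}$, so Chebyshev's extremal theorem yields
\begin{equation*}
\|f\|_{L_p(\{-1,1\}^n;X)}=P(e^t)\leq T_d(e^t)\,\|P\|_{L_\infty[-1,1]}\leq T_d(e^t)\,\|e^{-t\Delta}f\|_{L_p(\{-1,1\}^n;X)},
\end{equation*}
which is the desired inequality \eqref{eq:reverseheatgeneral}.

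I do not expect a substantive obstacle: the only real subtlety is the use of the \emph{symmetric} interval $[-e^{-t},e^{-t}]$ in \eqref{eq:plankey} rather than the one-sided interval $[0,e^{-t}]$. This symmetry, made available by the $\e\mapsto-\e$ involution on the Hamming cube, is exactly what invokes the sharp form of Chebyshev's theorem and produces the constant $T_d(e^t)$ rather than a weaker growth estimate (such as the one governed by $\frac{1}{2}(e^t+\sqrt{e^{2t}-1})^d$ coming from a one-sided extremal problem).
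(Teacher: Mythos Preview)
Your proof is correct. Both your argument and the paper's ultimately rest on the same two ingredients: the contractivity of $x^{\Delta}$ for $x\in[-1,1]$ (obtained via the $\e\mapsto-\e$ symmetry) and Chebyshev's extremal inequality \eqref{eq:chebyshevineq}. The difference lies in where the duality is applied. The paper uses Hahn--Banach on the evaluation functional $p\mapsto p(e^t)$ acting on $\big(\mathrm{span}\{1,x,\ldots,x^d\},\|\cdot\|_{\ms{C}([-1,1])}\big)$, obtaining via Riesz representation a measure $\mu_t$ on $[-1,1]$ of total variation $T_d(e^t)$ satisfying $\int x^k\,\diff\mu_t=e^{tk}$, and then integrating the operator $x^{\Delta}$ against $\mu_t$. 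You instead pick a norming functional $\Lambda$ on $L_p(\{-1,1\}^n;X)$ and reduce to the scalar polynomial $P(z)=\Lambda(\psi(z))$, to which Chebyshev's inequality applies directly. Your route is slightly more elementary for this particular statement, since it avoids the Riesz representation theorem. The paper's ``Figiel duality'' framework, on the other hand, is what is reused throughout Sections~\ref{sec:2}--\ref{sec:3}: replacing $[-1,1]$ by domains $\overline{\Omega}\subset\C$ and the functional $p\mapsto p(e^t)$ by $p\mapsto p'(1)$ or other functionals yields Theorems~\ref{thm:laplaciangeneral}, \ref{thm:mendel-naor}, \ref{thm:reverseheatKconvex}, and \ref{thm:laplacianKconvex} by the same template.
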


We note that a weaker bound in the spirit of Theorem \ref{thm:reverseheatgeneral}, attributed partially to Oleszkiewicz, was established in \cite[Lemma~5.4]{FHKL16} (see Remark \ref{rem:ole} below for a comparison \mbox{with Theorem \ref{thm:reverseheatgeneral}).}

Using Theorem \ref{thm:reverseheatgeneral} and the hypercontractivity of the discrete heat semigroup (see \cite{Bon70}), we deduce the following moment comparison for functions of low degree.

\begin{corollary} \label{cor:realchaos}
Fix $n,d\in\N$ with $d\in\{1,\ldots,n\}$ and let $(X,\|\cdot\|_X)$ be a Banach space. For every $p>q>1$ and every function $f:\{-1,1\}^n\to X$ of degree at most $d$, we have
\begin{equation} \label{eq:realchaos}
\|f\|_{L_p(\{-1,1\}^n;X)} \leq T_d\Big(\sqrt{\frac{p-1}{q-1}}\Big) \|f\|_{L_q(\{-1,1\}^n;X)}.
\end{equation}
\end{corollary}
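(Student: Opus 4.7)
The plan is to combine the reverse heat-semigroup estimate of Theorem \ref{thm:reverseheatgeneral} with the (vector valued) hypercontractive inequality of Bonami applied in the right direction, and then to optimize the time parameter $t$ so that both endpoints line up.

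First, I would upgrade scalar hypercontractivity to the vector valued setting. By the probabilistic representation \eqref{eq:heatrepresent}, for every $\e\in\{-1,1\}^n$ the vector $e^{-t\Delta}f(\e)$ is the expectation of $f(\eta)$ with respect to the noise variable $\eta$ described after \eqref{eq:heatrepresent}. Therefore, if $g(\e)\eqdef\|f(\e)\|_X$, the triangle inequality gives
\begin{equation*}
\|e^{-t\Delta}f(\e)\|_X \leq \E\bigl[\|f(\eta)\|_X\bigr] = e^{-t\Delta}g(\e).
\end{equation*}
Taking $L_p$ norms and invoking the scalar Bonami--Beckner inequality for the nonnegative function $g$, one obtains the vector valued hypercontractive bound
\begin{equation*}
\|e^{-t\Delta}f\|_{L_p(\{-1,1\}^n;X)} \leq \|e^{-t\Delta}g\|_{L_p(\{-1,1\}^n)} \leq \|g\|_{L_q(\{-1,1\}^n)} = \|f\|_{L_q(\{-1,1\}^n;X)},
\end{equation*}
valid whenever $e^{2t}\geq\tfrac{p-1}{q-1}$.

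Next, I would choose $t$ to be the smallest admissible time, namely $e^{t}=\sqrt{(p-1)/(q-1)}$. Since $f$ has degree at most $d$, Theorem \ref{thm:reverseheatgeneral} yields the matching lower bound
\begin{equation*}
\|e^{-t\Delta}f\|_{L_p(\{-1,1\}^n;X)} \geq \frac{1}{T_d(e^{t})}\|f\|_{L_p(\{-1,1\}^n;X)} = \frac{1}{T_d\!\bigl(\sqrt{(p-1)/(q-1)}\bigr)}\|f\|_{L_p(\{-1,1\}^n;X)}.
\end{equation*}
Chaining the two inequalities and rearranging gives exactly the bound \eqref{eq:realchaos}.

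There is essentially no obstacle here beyond writing the argument cleanly: the only subtlety is the transfer of scalar hypercontractivity to the vector valued setting through the Jensen-type inequality $\|e^{-t\Delta}f\|_X\leq e^{-t\Delta}\|f\|_X$, and the careful selection of $t$ so that the hypercontractive inequality saturates at the point where the reverse bound of Theorem \ref{thm:reverseheatgeneral} is evaluated. The case $p=\infty$ follows by letting $p\to\infty$, or directly by the same argument using the $L_\infty$ endpoint of hypercontractivity together with Theorem \ref{thm:reverseheatgeneral}.
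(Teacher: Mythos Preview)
Your proposal is correct and follows essentially the same route as the paper: combine Theorem \ref{thm:reverseheatgeneral} with vector valued hypercontractivity at the optimal time $t=\tfrac{1}{2}\log\tfrac{p-1}{q-1}$. The only cosmetic difference is that the paper cites Borell for the vector valued extension of Bonami's inequality, whereas you spell out the pointwise domination $\|e^{-t\Delta}f\|_X\le e^{-t\Delta}\|f\|_X$ explicitly; this is precisely the content of Borell's observation, so the arguments coincide.
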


To the extent of our knowledge, the best previously known moment comparison for general vector valued functions of low degree on the discrete cube can be extracted from an argument in the monograph \cite{KW92} of Kwapie\'{n} and Woyczy\'{n}ski. In Remark \ref{rem:kwapien} below, we quantify their argument and show that the bounds of \cite[Proposition~6.5.1]{KW92} \mbox{are weaker than those of Corollary \ref{cor:realchaos}.}

Another application of Theorem \ref{thm:reverseheatgeneral} is a refinement of a celebrated inequality on the discrete cube due to Pisier \cite{Pis86}. In connection with his work on nonlinear type, Pisier showed that for every Banach space $(X,\|\cdot\|_X)$, every $p\in[1,\infty]$ and every function $f:\{-1,1\}^n\to X$, we have
\begin{equation} \label{eq:pisier}
\Big\| f - \frac{1}{2^n} \sum_{\delta\in\{-1,1\}^n} f(\delta) \Big\|_{L_p(\{-1,1\}^n;X)} \leq (\log n+1) \Big( \frac{1}{2^n} \sum_{\delta\in\{-1,1\}^n} \Big\| \sum_{i=1}^n \delta_i \partial_i f\Big\|^p_{L_p(\{-1,1\}^n;X)}\Big)^{1/p}.
\end{equation}
For $X=\R$, the right hand side of \eqref{eq:pisier} is equivalent to the $L_p$ norm of the gradient of $f$ due to Khintchine's inequality \cite{Khi23}, thus \eqref{eq:pisier} can be understood as a vector valued Poincar\'e inequality. The dependence on the dimension $n$ in Pisier's inequality \eqref{eq:pisier} for various classes of spaces $X$ is fundamental in investigations in the nonlinear geometry of Banach spaces and the Ribe program (see \cite{Nao12} for a detailed discussion around this topic). For general Banach spaces, Talagrand \cite{Tal93} has proven that the factor $\log n$ in \eqref{eq:pisier} is asymptotically optimal for every $p\in[1,\infty)$, whereas Wagner \cite{Wag00} has shown that when $p=\infty$, Pisier's inequality holds with an absolute constant. Here we show the following refinement of Pisier's inequality for functions of low degree.

\begin{theorem} \label{thm:Pisierlowfreq}
Fix $n,d\in\N$ with $d\in\{1,\ldots,n\}$ and let $(X,\|\cdot\|_X)$ be a Banach space. For every $p\in[1,\infty)$ and every function $f:\{-1,1\}^n \to X$ of degree at most $d$, we have
\begin{equation} \label{eq:Pisierlowfreq}
\Big\| f - \frac{1}{2^n} \sum_{\delta\in\{-1,1\}^n} f(\delta) \Big\|_{L_p(\{-1,1\}^n;X)} \leq 3(\log d+1) \Big( \frac{1}{2^n} \sum_{\delta \in \{-1,1\}^n} \Big\| \sum_{i=1}^n \delta_i \partial_i f \Big\|_{L_p(\{-1,1\}^n;X)}^p\Big)^{1/p}
\end{equation}
and the $\log d$ factor is asymptotically sharp for every $p\in[1,\infty)$.
\end{theorem}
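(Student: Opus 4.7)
My plan is to follow the semigroup-based derivation of Pisier's original inequality \eqref{eq:pisier}, but to truncate the time integral at $t = \log d$ using the low-degree hypothesis. Starting from
$$f - \E f \;=\; \int_0^\infty e^{-t\Delta}\Delta f\,\mathrm{d}t,$$
I would extract an envelope $\Psi(t)$ satisfying $\|e^{-t\Delta}\Delta f\|_{L_p(\{-1,1\}^n;X)} \le \Psi(t)\cdot G$, where $G$ denotes the right-hand side of \eqref{eq:Pisierlowfreq} divided by $3(\log d + 1)$, and bound $\int_0^\infty \Psi(t)\,\mathrm{d}t$. Pisier's classical analysis yields an envelope of the form $\Psi(t) \le C\min(1, ne^{-t})$, whose integral over $(0,\infty)$ is $\log n + 1$; the task is to replace the $n$ by $d$.

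For the short-time regime $t \in [0, \log d]$, I would retain Pisier's original pointwise coupling identity, which expresses $\sum_i \partial_i e^{-t\Delta} f(\e)$ as an expectation of $\sum_i \delta_i \partial_i f$-type quantities evaluated at noise-perturbed versions of $\e$ and yields $\Psi(t) \le C_0$ uniformly; integrating contributes $C_0 \log d$. For the long-time regime $t \ge \log d$, the function $\Delta f$ lies in the tail space of level $\ge 1$, so the heat semigroup contracts it by the multiplicative factor $e^{-t}$ in $L_p$. Combined with a vector-valued Bernstein--Markov type estimate
$$\|\Delta f\|_{L_p(\{-1,1\}^n;X)} \;\le\; C_1 d \cdot G \qquad (\deg f \le d),$$
obtained by feeding Theorem \ref{thm:reverseheatgeneral} (applied to the degree-$\le d$ function $\Delta f$) into the Figiel-style duality scheme highlighted after \eqref{eq:heatrepresent}, this gives $\Psi(t) \le C_1 d e^{-t}$ in the tail regime. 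The tail integral $\int_{\log d}^\infty C_1 d e^{-t}\,\mathrm{d}t = C_1$ is then of constant order, and summing both regimes while carefully tracking the absolute constants $C_0$ and $C_1$ produces the factor $3(\log d + 1)$.

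The main obstacle is establishing the Bernstein--Markov type bound $\|\Delta f\|_{L_p(\{-1,1\}^n;X)} \le C d \cdot G$ for a general Banach target $X$. In the scalar $L_2$ case it is immediate from Parseval's formula, since $\|\Delta f\|_2^2 = \sum_A |A|^2 \widehat{f}(A)^2 \le d \sum_A |A|\, \widehat{f}(A)^2 = d\,G^2$; but for an arbitrary Banach-valued $f$ and arbitrary $p$ no such spectral reduction is available, and the proof must instead proceed through Theorem \ref{thm:reverseheatgeneral} together with a duality argument. This is the technical crux of the entire theorem.

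Asymptotic sharpness of the factor $\log d$ for every $p \in [1,\infty)$ follows by embedding Talagrand's extremal construction \cite{Tal93} (which saturates the $\log m$ factor in \eqref{eq:pisier} at dimension $m$) from $\{-1,1\}^d$ into $\{-1,1\}^n$ as a function depending only on the first $d$ coordinates; such an extension has degree $\le d$ and witnesses that $\log d$ cannot be improved in \eqref{eq:Pisierlowfreq}.
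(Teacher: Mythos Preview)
Your short-time estimate is wrong, and this is fatal to the scheme as written. You claim that Pisier's coupling identity yields $\|e^{-t\Delta}\Delta f\|_{L_p(\{-1,1\}^n;X)}\le C_0\,G$ uniformly in $t\in[0,\log d]$. But at $t=0$ this would give $\|\Delta f\|_{L_p}\le C_0\,G$, which is false already for $X=\C$, $p=2$: take $f=w_{\{1,\ldots,d\}}$, so that $\|\Delta f\|_{L_2}=d$ while $G=(\E_\delta|\delta_1+\cdots+\delta_d|^2)^{1/2}=\sqrt{d}$. Hence $\Psi(0)\asymp\sqrt{d}$, not $O(1)$. What Pisier's coupling (the two-variable function $g_{s,t}^*$) actually delivers is the envelope $\frac{1}{e^t-1}$, which blows up at $t=0$; your asserted envelope $C\min(1,ne^{-t})$ is not the one that comes out of Pisier's argument. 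This also makes your ``main obstacle'' $\|\Delta f\|_{L_p}\le C_1 d\,G$ both unproven (your sketch via Theorem~\ref{thm:reverseheatgeneral} does not produce it) and, more importantly, redundant: if the short-time bound held, the long-time bound would follow a fortiori.

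The paper's route avoids the divergence at $t=0$ altogether by never splitting the integral. One runs Pisier's argument verbatim to obtain, for every $s>0$,
\[
\|e^{-s\Delta}f\|_{L_p(\{-1,1\}^n;X)}\ \le\ \log\!\Big(\frac{e^s}{e^s-1}\Big)\,G,
\]
and then invokes Theorem~\ref{thm:reverseheatgeneral} on the degree-$d$ function $f$ to get $\|f\|_{L_p}\le T_d(e^s)\,\|e^{-s\Delta}f\|_{L_p}$. Combining and using $T_d(e^s)\le e^{d^2 s}$, one minimizes $e^{d^2 s}\log\!\big(\tfrac{e^s}{e^s-1}\big)$ over $s>0$ (the optimum is near $s\asymp d^{-2}$), which yields the constant $3(\log d+1)$. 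So the low-degree hypothesis enters exactly once, through the reverse heat bound~\eqref{eq:reverseheatgeneral}, not through any Bernstein--Markov estimate relating $\Delta f$ to $G$. Your sharpness argument via Talagrand's example embedded in the first $d$ coordinates is fine.
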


\subsubsection{A Bernstein--Markov type inequality for $\Delta$} A variant of the proof of Theorem \ref{thm:reverseheatgeneral} implies the following Bernstein--Markov \mbox{type inequality, which had previously appeared in \cite{FHKL16}.}

\begin{theorem} \label{thm:laplaciangeneral}
Fix $n,d\in\N$ with $d\in\{1,\ldots,n\}$ and let $(X,\|\cdot\|_X)$ be a Banach space. For every $p\in[1,\infty]$ and every function $f:\{-1,1\}^n \to X$ of degree at most $d$, we have
\begin{equation} \label{eq:laplaciangeneral}
\| \Delta f\|_{L_p(\{-1,1\}^n;X)} \leq d^2 \|f\|_{L_p(\{-1,1\}^n;X)}.
\end{equation}
\end{theorem}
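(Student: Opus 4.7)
The plan is to deduce this Bernstein--Markov inequality from the classical Markov inequality for real polynomials, viewed in a vector-valued setting, exactly as in the argument underlying Theorem \ref{thm:reverseheatgeneral}. The key object is the $\lambda$-deformation of $f$: for a function $f = \sum_{|A|\le d} \widehat{f}(A) w_A$ of degree at most $d$, define
\begin{equation}
F(\lambda) \eqdef \sum_{A\subseteq\{1,\ldots,n\}} \lambda^{|A|}\widehat{f}(A) w_A, \qquad \lambda \in \R.
\end{equation}
Then $F$ is a polynomial of degree at most $d$ in $\lambda$ taking values in $L_p(\{-1,1\}^n;X)$, which interpolates the data $F(1)=f$, $F(e^{-t})=e^{-t\Delta}f$ for $t\ge 0$, and $F(-1)(\e) = f(-\e)$. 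Moreover, differentiating at $\lambda=1$ produces exactly
\begin{equation}
F'(1) = \sum_{A\subseteq\{1,\ldots,n\}} |A|\widehat{f}(A) w_A = \Delta f.
\end{equation}

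The next step is to verify that $\|F(\lambda)\|_{L_p(\{-1,1\}^n;X)} \le \|f\|_{L_p(\{-1,1\}^n;X)}$ for every $\lambda\in[-1,1]$. For $\lambda\in[0,1]$ this is immediate from the identity $F(\lambda) = e^{-t\Delta}f$ with $t=-\log\lambda$ and the $L_p$-contractivity of the heat semigroup noted after \eqref{eq:heatrepresent}. For $\lambda\in[-1,0]$, I would write $\lambda=-|\lambda|$ and observe that if $g(\e)\eqdef f(-\e)$, then $\widehat{g}(A)=(-1)^{|A|}\widehat{f}(A)$, so that $F(\lambda) = e^{-s\Delta}g$ with $s=-\log|\lambda|\ge0$. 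Since $\e\mapsto -\e$ is a measure-preserving involution of $\{-1,1\}^n$, one has $\|g\|_{L_p(\{-1,1\}^n;X)}=\|f\|_{L_p(\{-1,1\}^n;X)}$, and the contractivity of the semigroup again delivers the bound.

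With the uniform estimate on $[-1,1]$ in hand, the conclusion follows from the vector-valued form of Markov's classical inequality: for any polynomial $P$ of degree at most $d$ with values in a Banach space $Y$,
\begin{equation}
\|P'(1)\|_Y \leq d^2 \sup_{\lambda\in[-1,1]} \|P(\lambda)\|_Y.
\end{equation}
This reduces immediately to the scalar case by Hahn--Banach: applying any norming functional $\varphi\in Y^*$ with $\|\varphi\|\le1$ to $P$ yields a scalar polynomial of degree at most $d$ to which the usual Markov inequality applies, and then taking the supremum over $\varphi$ recovers the vector-valued statement. Applying this to $Y=L_p(\{-1,1\}^n;X)$ and $P=F$ gives $\|\Delta f\|_{L_p(\{-1,1\}^n;X)} = \|F'(1)\|_{L_p(\{-1,1\}^n;X)} \le d^2\|f\|_{L_p(\{-1,1\}^n;X)}$.

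No step here is truly delicate; the only subtlety worth flagging is the passage through negative $\lambda$, where one must use the coordinatewise-reflection symmetry rather than attempt to interpret $e^{-t\Delta}$ at complex $t$. Once $F$ is controlled on the full interval $[-1,1]$, the $d^2$ factor is exactly what the scalar Markov inequality provides, and it is known to be sharp already in the scalar case (witnessed by Chebyshev polynomials), so no improvement is to be expected from this route.
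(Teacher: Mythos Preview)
Your proof is correct and follows essentially the same route as the paper: both rest on the contractivity of $x^\Delta$ for $x\in[-1,1]$ together with the classical Markov inequality $|p'(1)|\le d^2\|p\|_{\ms{C}([-1,1])}$. The only cosmetic difference is that the paper applies Hahn--Banach to the functional $p\mapsto p'(1)$ on $\ms{C}([-1,1])$ (obtaining a representing measure via Riesz) and then integrates $x^\Delta f$ against it, whereas you apply Hahn--Banach on the target space $L_p(\{-1,1\}^n;X)$ to reduce the vector-valued Markov inequality to the scalar one---these are dual packagings of the same argument.
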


In \cite[Lemma~5.4]{FHKL16}, the Bernstein--Markov type inequality \eqref{eq:laplaciangeneral} was stated only for real valued functions and $p=1$. In the vector valued setting which is of interest here, inequality \eqref{eq:laplaciangeneral} is sharp for general Banach spaces. Recall that a Banach space $(X,\|\cdot\|_X)$ has cotype $q\in[2,\infty]$ with constant $C\in(0,\infty)$ if for every $n\in\N$ and vectors $x_1,\ldots,x_n\in X$, we have
\begin{equation}
\Big(\frac{1}{2^n} \sum_{\e\in\{-1,1\}^n} \Big\|\sum_{i=1}^n \e_i x_i\Big\|_X^q\Big)^{1/q}  \geq \frac{1}{C} \Big(\sum_{i=1}^n \|x_i\|_X^q\Big)^{1/q}.
\end{equation}
The fact that every Banach space $(X,\|\cdot\|_X)$ has cotype $q=\infty$ with constant $C=1$ follows from the triangle inequality, yet having finite cotype is a meaningful structural property of a given space. We refer to the survey \cite{Mau03} for further information on the rich theory of type and cotype. We will show that any improvement of \eqref{eq:laplaciangeneral}\mbox{, forces the target space $X$ to have finite cotype.}

\begin{theorem} \label{thm:cotype}
Let $(X,\|\cdot\|_X)$ be a Banach space and $p\in[1,\infty]$. Suppose that there exist some $\eta\in(0,1)$ and $d\in\N$ such that for every $n\in\N$ with $d\in\{1,\ldots,n\}$, every function $f:\{-1,1\}^n \to X$ of degree at most $d$ satisfies the inequality
\begin{equation} \label{eq:cotypeassumption}
\|\Delta f\|_{L_p(\{-1,1\}^n;X)} \leq (1-\eta)d^2 \|f\|_{L_p(\{-1,1\}^n;X)}.
\end{equation}
Then $X$ has finite cotype.
\end{theorem}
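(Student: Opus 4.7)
I argue by contrapositive: assuming $X$ has no finite cotype, I will produce degree-$d$ functions $f:\{-1,1\}^n\to X$ whose Bernstein--Markov ratio $\|\Delta f\|_{L_p(X)}/\|f\|_{L_p(X)}$ can be made arbitrarily close to $d^2$, contradicting \eqref{eq:cotypeassumption}. The main tool is the Maurey--Pisier theorem: lack of finite cotype means that $\ell_\infty$ is finitely representable in $X$, so for every $N\in\N$ and every $\lambda>1$ there exist $x_1,\ldots,x_N\in X$ with $\max_j|a_j|\le\|\sum_j a_j x_j\|_X\le\lambda\max_j|a_j|$ for all scalars $a_1,\ldots,a_N$. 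This $\ell_\infty^N$-embedding will be used to amplify a scalar near-extremizer of Markov's inequality -- namely, the Chebyshev polynomial $T_d$ applied to a normalised Rademacher sum.

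Given $m,N\in\N$ and $\lambda>1$ as above, set $n=Nm$ and partition $\{1,\ldots,n\}$ into $N$ consecutive blocks $A_1,\ldots,A_N$ of size $m$; for each $j$ let $w_j(\e)=\frac{1}{m}\sum_{i\in A_j}\e_i\in[-1,1]$. Define $f:\{-1,1\}^n\to X$ by
\begin{equation*}
f(\e)\,\eqdef\,\sum_{j=1}^N T_d(w_j(\e))\,x_j.
\end{equation*}
Each $w_j$ is Walsh-linear and $T_d$ has degree $d$, so $f$ has degree at most $d$; and since $T_d(w_j)$ depends only on the coordinates in $A_j$, we also have $\Delta f=\sum_j \Delta T_d(w_j)\,x_j$. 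The key scalar computation is the value of $\Delta T_d(w_j)$ on the event $\{w_j=1\}$ (all $\e_i=+1$ for $i\in A_j$): each of the $m$ partial derivatives there equals $\tfrac12(T_d(1)-T_d(1-2/m))$, and Taylor-expanding $T_d(1-x)=1-d^2 x+O_d(x^2)$ yields
\begin{equation*}
\Delta T_d(w_j)(\e)\big|_{w_j(\e)=1}\,=\,\tfrac{m}{2}\bigl(1-T_d(1-\tfrac{2}{m})\bigr)\,\ge\,d^2-O_d(1/m).
\end{equation*}
On the same event $T_d(w_j)=1$, which means that $T_d$ nearly saturates Markov's inequality as $m\to\infty$.

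The $\ell_\infty^N$-bounds yield pointwise $\|f(\e)\|_X\le\lambda$ and $\|\Delta f(\e)\|_X\ge\max_j|\Delta T_d(w_j)(\e)|$. Let $E=\bigcup_{j=1}^N\{w_j=1\}$, whose probability, by block independence, equals $1-(1-2^{-m})^N$; on $E$ at least one $j$ realises $|\Delta T_d(w_j)|\ge d^2-O_d(1/m)$, so
\begin{equation*}
\frac{\|\Delta f\|_{L_p(X)}}{\|f\|_{L_p(X)}}\,\ge\,\frac{d^2-O_d(1/m)}{\lambda}\,\bigl(1-(1-2^{-m})^N\bigr)^{1/p}.
\end{equation*}
Taking $N\to\infty$ at fixed $m$, then $m\to\infty$, and finally $\lambda\downarrow 1$, the right hand side exceeds $(1-\eta)d^2$ for any $\eta>0$, contradicting \eqref{eq:cotypeassumption}. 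Therefore $X$ must have finite cotype.

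The technical heart is the $L_\infty$-type scalar computation displayed above; the $\ell_\infty^N$-amplification is then a standard device converting this $L_\infty$-extremizer into an $L_p$-extremizer for every $p\in[1,\infty)$, since with $N\gg 2^m$ independent blocks some block lands in the extremal configuration with probability close to $1$. The case $p=\infty$ is in fact vacuous: already the scalar Chebyshev on a single block forces $\eta=0$ regardless of $X$.
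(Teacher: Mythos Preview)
Your proof is correct and shares the paper's core strategy: argue by contrapositive via Maurey--Pisier, and use the Chebyshev polynomial of a normalised Rademacher average as the scalar near-extremiser for Markov's inequality $T_d'(1)=d^2$. The difference lies in how the $\ell_\infty^N$-embedding is exploited to promote the $L_\infty$-extremiser to an $L_p$-extremiser.

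You partition $\{1,\ldots,n\}$ into $N$ independent blocks, place $T_d$ on each block, and rely on the fact that with $N\gg 2^m$ blocks at least one hits the extremal configuration $\{w_j=1\}$ with probability close to $1$; this yields the lower bound on $\|\Delta f\|_{L_p}$ only on a high-probability event, and you must then send $N\to\infty$ before $m\to\infty$. The paper instead uses a single ``diagonal'' construction: it takes $N=2^n$ and defines $f:\{-1,1\}^n\to\ell_\infty^{2^n}$ by $[f(\e)](\delta)=T_d\big(\frac{\e\cdot\delta}{n}\big)$, then composes with the embedding $\ell_\infty^{2^n}\hookrightarrow X$. The point is that for \emph{every} $\e$ the coordinate $\delta=\e$ realises $T_d(1)=1$ and the extremal value of the Laplacian, so both $\|F(\e)\|_X\le 1+\theta$ and $\|\Delta F(\e)\|_X\ge\frac{n}{2}(1-T_d(1-2/n))$ hold pointwise, with no probabilistic step and only a single limit $n\to\infty$. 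Your block argument is perhaps the more natural first thought and is perfectly rigorous; the paper's diagonal trick is slicker and shows how to manufacture the extremal configuration deterministically at every point. Your closing remark that $p=\infty$ is vacuous (already witnessed by scalar functions) is also correct.
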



\subsection{Estimates for $K$-convex Banach spaces} \label{subsec:1.2} Let $(X,\|\cdot\|_X)$ be a Banach space. The $X$-valued Rademacher projection $\msf{Rad}:L_p(\{-1,1\}^n;X)\to L_p(\{-1,1\}^n;X)$ is the operator given by
\begin{equation}
\forall \ \e\in\{-1,1\}^n, \ \ \ \msf{Rad}(f)(\e) \eqdef \sum_{i=1}^n \widehat{f}(\{i\}) \e_i.
\end{equation}
We say that $(X,\|\cdot\|_X)$ is $K$-convex (see also \cite{Mau03}) if $\sup_{n\in\N} \|\msf{Rad}\|_{L_p(\{-1,1\}^n;X)\to L_p(\{-1,1\}^n;X)} < \infty$ for some (equivalently, for all) $p\in(1,\infty)$. Pisier's $K$-convexity theorem \cite{Pis82} asserts that a Banach space $(X,\|\cdot\|_X)$ is $K$-convex if and only if $X$ does not contain copies of $\{\ell_1^n\}_{n=1}^\infty$ with distortion arbitrarily close to 1. Moreover, both conditions are equivalent to $X$ having nontrivial type, see \cite{Pis73}. In the proofs of most results of this section, we will crucially use the deep fact that the heat semigroup with values in a $K$-convex space is a bounded analytic semigroup \cite{Pis82}.

\subsubsection{The heat smoothing conjecture} In \cite{MN14}, Mendel and Naor asked whether for every $K$-convex Banach space $(X,\|\cdot\|_X)$ and $p\in(1,\infty)$ there exist $c(p,X), C(p,X)\in(0,\infty)$ such that for every $n\in\N$, every function $f:\{-1,1\}^n\to X$ in the $d$-th tail space satisfies the estimate
\begin{equation}
\forall \ t\geq0, \ \ \ \|e^{-t\Delta}f\|_{L_p(\{-1,1\}^n;X)} \leq C(p,X)e^{-c(p,X)dt} \|f\|_{L_p(\{-1,1\}^n;X)}.
\end{equation}
In the direction of this question, currently known as the heat smoothing conjecture, they showed the following theorem, partially relying on ideas from \cite{Pis07} (see also the work \cite{HMO17} of Heilman, Mossel and Oleszkiewicz for an optimal result when $d=1$ and $X=\C$).

\begin{theorem} [Mendel--Naor] \label{thm:mendel-naor}
Let $(X,\|\cdot\|_X)$ be a $K$-convex Banach space. For every $p\in(1,\infty)$, there exist $c(p,X), C(p,X)\in(0,\infty)$ and $A(p,X)\in[1,\infty)$ such that for every $n,d\in\N$ with $d\in\{0,1,\ldots,n-1\}$ and every function $f:\{-1,1\}^n\to X$ in the $d$-th tail space, we have
\begin{equation} \label{eq:mendel-naor}
\forall \ t\geq0, \ \ \ \|e^{-t\Delta}f\|_{L_p(\{-1,1\}^n;X)} \leq C(p,X)e^{-c(p,X)d\min\{t,t^{A(p,X)}\}} \|f\|_{L_p(\{-1,1\}^n;X)}.
\end{equation}
\end{theorem}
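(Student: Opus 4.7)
The plan is to leverage Pisier's theorem that the heat semigroup extends to a bounded analytic family on a sector of $\C$ whenever $X$ is $K$-convex, together with a complex-analytic maximum principle argument and a Phragm\'en--Lindel\"of interpolation to handle short times. Throughout the sketch I write $\|\cdot\|_{L_p}$ as a shorthand for $\|\cdot\|_{L_p(\{-1,1\}^n;X)}$.

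By Pisier's $K$-convexity theorem \cite{Pis82}, there exist $\theta = \theta(p,X) \in (0,\pi/2)$ and $M = M(p,X) \in [1,\infty)$ such that the heat semigroup extends holomorphically to $\Sigma_\theta := \{z \in \C\setminus\{0\} : |\arg z|<\theta\}$ with $\sup_{z \in \Sigma_\theta}\|e^{-z\Delta}\|_{L_p \to L_p} \le M$ uniformly in $n \in \N$. To settle the large-time regime, fix $f$ in the $d$-th tail space and write $f_k = \sum_{|A|=k}\widehat{f}(A)w_A$ for its $k$-homogeneous parts. The map $w \mapsto \sum_{k\ge d}f_k w^k =: h(w)$ is an entire $L_p$-valued polynomial which coincides with $e^{-z\Delta}f$ whenever $w = e^{-z}$. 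For $0 < |w| \le r_0 := e^{-\pi/\tan\theta}$, the principal branch of $-\log w$ lies in $\Sigma_\theta$, so the above bound yields $\|h(w)\|_{L_p} \le M\|f\|_{L_p}$ throughout the disc $\{|w| \le r_0\}$. Factoring $h(w) = w^d g(w)$ with $g(w) = \sum_{k\ge d}f_k w^{k-d}$ and applying the maximum principle to the subharmonic function $w \mapsto \|g(w)\|_{L_p}$ produces $\|g(w)\|_{L_p} \le M r_0^{-d}\|f\|_{L_p}$ for $|w|\le r_0$. Specializing to $w = e^{-t}$ with $t \ge t_\ast := \pi/\tan\theta$ gives
\[
\|e^{-t\Delta}f\|_{L_p} \le Me^{-d(t-t_\ast)}\|f\|_{L_p},
\]
and in particular $\|e^{-t\Delta}f\|_{L_p} \le Me^{-dt/2}\|f\|_{L_p}$ for $t \ge 2t_\ast$, which is the asserted linear regime.

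For the short-time regime $t \in (0, 2t_\ast)$, I would invoke a Phragm\'en--Lindel\"of argument on the sector. The subharmonic function $z \mapsto \log\|e^{-z\Delta}f\|_{L_p}$ on $\Sigma_\theta$ is globally bounded above by $\log(M\|f\|_{L_p})$ and, by the previous step, is additionally bounded above by $\log(M\|f\|_{L_p}) - dt/2$ along the ray $\{t \ge 2t_\ast\}$. Mapping $\Sigma_\theta$ conformally onto the right half-plane via $z \mapsto z^{\pi/(2\theta)}$ and inserting a holomorphic multiplier of the form $\exp(cd z^{\pi/(2\theta)})$ reduces the interpolation to the three-lines theorem and produces a bound of the form $\|e^{-t\Delta}f\|_{L_p} \le C(p,X)e^{-c(p,X)dt^{\pi/(2\theta)}}\|f\|_{L_p}$ for $t \in (0, 2t_\ast)$. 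Combining the two regimes and setting $A(p,X) := \pi/(2\theta(p,X))$ yields the conclusion.

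The main technical obstacle is the Phragm\'en--Lindel\"of step: the decay $e^{-dt/2}$ along the positive real axis has to be propagated to a sub-linear rate $e^{-cdt^A}$ at short times while keeping the prefactor independent of $d$, $t$, and $n$. The asymmetry of the sector---mere boundedness on the boundary rays $\arg z = \pm\theta$ against the genuine exponential-in-$d$ decay along the interior ray $\R_+$---dictates both the need for the exponent $A = \pi/(2\theta)$ and the precise choice of holomorphic multiplier; balancing these constraints is the heart of the argument.
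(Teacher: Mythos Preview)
Your large-time argument is correct and close in spirit to the paper's: both exploit that $w\mapsto w^{\Delta}f$ vanishes to order $d$ at $w=0$, apply the maximum principle on a disc where $\|w^\Delta\|_{L_p\to L_p}$ is bounded, and extract the factor $|w|^d$. The paper obtains boundedness on this disc via the Rademacher-projection estimate $\|\msf{Rad}_k\|\le Me^{ka}$ rather than through $w\mapsto -\log w$, but the two routes are equivalent.

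The small-time step, however, has a genuine gap. Your multiplier $e^{cdz^{\pi/(2\theta)}}$ has modulus one on the boundary rays $\arg z=\pm\theta$, but its growth $\exp\!\big(cd\,\operatorname{Re}(z^{\pi/(2\theta)})\big)$ in the interior of $\Sigma_\theta$ is exactly of the critical Phragm\'en--Lindel\"of order $\pi/(2\theta)$ for a sector of opening $2\theta$. Standard Phragm\'en--Lindel\"of therefore does not apply, and the refined version requiring $o(|z|^{\pi/(2\theta)})$ growth fails too: the only additional information available---the linear-in-$t$ decay $\|e^{-t\Delta}f\|_{L_p}\le Me^{-dt/2}\|f\|_{L_p}$ for $t\ge 2t_\ast$---is dominated by $cdt^{\pi/(2\theta)}$ as $t\to\infty$ whenever $\theta<\pi/2$. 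No choice of $c>0$ keeps $\|e^{cdz^{\pi/(2\theta)}}e^{-z\Delta}f\|_{L_p}$ bounded along the real axis at infinity, so the three-lines reduction cannot close. Restricting to a smaller sector $\Sigma_{\theta'}$ does not help either, since the multiplier is then unbounded on the new boundary rays.

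The paper avoids this obstacle by staying in the variable $w=e^{-z}$ on a \emph{bounded} domain. Corollary~\ref{cor:lens} furnishes a lens domain $\Omega(r)\subseteq\mb{D}$, containing $0$, on which $\|w^\Delta\|_{L_p\to L_p}\le K$. The tail-space condition says the holomorphic map $w\mapsto w^\Delta f$ vanishes to order $d$ at $0$; dividing by $\phi_{\Omega(r)}(w)^d$, where $\phi_{\Omega(r)}:\Omega(r)\to\mb{D}$ is conformal with $\phi_{\Omega(r)}(0)=0$, yields a function bounded by $K\|f\|_{L_p}$ on $\partial\Omega(r)$ and hence, by the ordinary maximum principle, everywhere in $\Omega(r)$. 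Evaluating at $w=e^{-t}$ and computing $\phi_{\Omega(r)}(e^{-t})$ explicitly (Lemma~\ref{lem:computeconformal}) gives the small-time decay with $A(p,X)=\pi/\theta(r)$. The point is that the order-$d$ zero at $w=0$, which you used only inside the small disc $\{|w|\le r_0\}$, does all the work on the full lens once one replaces $w^d$ by the correct conformal factor $\phi_{\Omega(r)}^d$; no Phragm\'en--Lindel\"of limiting argument is needed.
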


\noindent In Section \ref{sec:3} below, we present a simple proof of Theorem \ref{thm:mendel-naor} relying on a duality argument.

\subsubsection{An improved lower bound on the decay of the heat semigroup} Under the assumption that the target space $(X,\|\cdot\|_X)$ is $K$-convex, we get the following improvement over Theorem \ref{thm:reverseheatgeneral} (see also equation \eqref{eq:chsqrt} in Remark \ref{rem:ole} for comparison).

\begin{theorem} \label{thm:reverseheatKconvex}
Let $(X,\|\cdot\|_X)$ be a $K$-convex Banach space. For every $p\in(1,\infty)$, there exist $c(p,X), C(p,X)\in(0,\infty)$ and $\eta(p,X) \in \big(\frac{1}{2},1\big]$ such that for every $n,d\in\N$ with $d\in\{1,\ldots,n\}$ and every function $f:\{-1,1\}^n \to X$ of degree at most $d$, we have
\begin{equation} \label{eq:reverseheatKconvex}
\forall \ t\geq0, \ \ \ \|e^{-t\Delta} f\|_{L_p(\{-1,1\}^n;X)} \geq c(p,X) e^{-C(p,X)d\max\{t, t^{\eta(p,X)}\}}\|f\|_{L_p(\{-1,1\}^n;X)}.
\end{equation}
\end{theorem}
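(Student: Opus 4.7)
The plan is to mimic the duality/polynomial argument underlying Theorem~\ref{thm:reverseheatgeneral} but replace the mere contractivity of the heat semigroup by its \emph{analyticity} on $L_p(\{-1,1\}^n; X)$, which is available because $X$ is $K$-convex. By a theorem of Pisier~\cite{Pis82}, there exist $\theta_0 = \theta_0(p, X) \in (0, \pi/2)$ and $M = M(p, X) \in [1, \infty)$ such that $\|e^{-z\Delta}\|_{L_p(\{-1,1\}^n; X) \to L_p(\{-1,1\}^n; X)} \leq M$ for all $z$ in the closed sector $\overline{S_{\theta_0}} := \{z \in \C: |\arg z| \leq \theta_0\}$; this will be the starting point.

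I fix $f:\{-1,1\}^n \to X$ of degree at most $d$, set $g := e^{-t\Delta} f$, and introduce the $L_p(\{-1,1\}^n; X)$-valued polynomial $G(y) := \sum_{k=0}^d y^k g_k$, where $g_k$ is the $k$-homogeneous part of $g$, so that $G(1) = g$ and $G(e^t) = f$. Parameterizing $y = e^{-z}$ for $z \in \overline{S_{\theta_0}}$ and exploiting the sign-flip isometry $\e \mapsto -\e$ of $L_p(\{-1,1\}^n; X)$ (which handles negative $y$), I would obtain
\[
\|G(y)\|_{L_p(\{-1,1\}^n; X)} \leq M \,\|g\|_{L_p(\{-1,1\}^n; X)} \qquad \text{for all } y \in \Omega,
\]
where $\Omega := \{\pm e^{-z}: z \in \overline{S_{\theta_0}}\}$ is a compact subset of the closed unit disk with connected complement (in $\overline\C$), symmetric about $0$, touching the unit circle only at the two points $\pm 1$, at which $\Omega$ exhibits corners of interior angle $2\theta_0$ (with boundaries that are logarithmic spirals).

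Reducing to the scalar setting by composing with norming functionals in the closed unit ball of $L_p(\{-1,1\}^n; X)^*$, the vector-valued Bernstein--Walsh polynomial inequality would then give
\[
\|G(x)\|_{L_p(\{-1,1\}^n; X)} \leq M \,\|g\|_{L_p(\{-1,1\}^n; X)} \cdot |\Phi_\Omega(x)|^d, \qquad x \in \C \setminus \Omega,
\]
where $\Phi_\Omega$ is the conformal map of $\overline\C \setminus \Omega$ onto $\{|w|>1\}$ with $\Phi_\Omega(\infty) = \infty$. The classical asymptotics for Green's functions of planar domains at a corner of interior angle $2\theta_0$ yield
\[
\log|\Phi_\Omega(1+h)| \asymp c(\theta_0)\, h^{\pi/(2\pi - 2\theta_0)} \qquad \text{as } h \to 0^+ \text{ along the positive real axis,}
\]
whereas for large $h$ one has the crude estimate $|\Phi_\Omega(1+h)| \lesssim 1 + h$. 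Setting $\eta(p, X) := \pi/(2\pi - 2\theta_0(p, X)) \in (1/2, 1)$ and combining both regimes gives $|\Phi_\Omega(e^t)|^d \leq C(p,X) e^{C(p,X)\, d\, \max\{t, t^{\eta(p,X)}\}}$. Specializing to $x = e^t$ and rearranging (with $c(p, X)$ comparable to $1/M$) produces \eqref{eq:reverseheatKconvex}.

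The hard part will be quantifying the corner asymptotics of the Green's function of $\C \setminus \Omega$ uniformly in $t$, since the boundary of $\Omega$ consists of log spirals rather than line segments. I expect to handle this by comparing $\Omega$ with a model region built from two infinite circular sectors of half-angle $\theta_0$ attached at $\pm 1$ (for which the power map $\zeta \mapsto (1-\zeta)^{\pi/(2\pi - 2\theta_0)}$ explicitly linearizes the corner and yields the Green's function) and appealing to the monotonicity of Green's functions in the domain to transfer the estimate back to $\Omega$.
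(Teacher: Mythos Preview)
Your proposal is correct and follows essentially the same strategy as the paper: bound the polynomial $y\mapsto y^{\Delta}g$ on a domain $\Omega\subset\mathbb D$ on which $w^\Delta$ is uniformly bounded, then invoke a Bernstein--Walsh/maximum-principle argument via the conformal map of $\overline\Omega^c$ to control $\|e^{t\Delta}g\|$ by $|\Phi_\Omega(e^t)|^d\|g\|$.

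The one substantive difference is how the ``hard part'' you flag is handled. You work directly with $\Omega=\{\pm e^{-z}:z\in\overline{S_{\theta_0}}\}$, whose boundary near $\pm1$ consists of logarithmic spirals, and you plan to extract the corner exponent $\pi/(2\pi-2\theta_0)$ via Green's-function asymptotics or domain monotonicity. The paper bypasses this entirely: in Corollary~\ref{cor:lens} it first shows (using the Rademacher-projection bound \eqref{eq:boundRadk} to control a small disc about $0$) that $w^\Delta$ is uniformly bounded on a \emph{lens domain} $\Omega(r)\subseteq\Omega$. The exterior conformal map of a lens is written down explicitly in Lemma~\ref{lem:computeconformal} as a composition of M\"obius maps and a power map, so $|\phi_{\Omega(r)^c}(e^t)|$ is computed in closed form and the small-$t$ bound \eqref{eq:boundconformalbypower2} follows by elementary calculus---no asymptotic analysis or comparison argument is needed. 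Your route works, but the lens-domain reduction is what turns the analytic step into a two-line computation.
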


Theorem \ref{thm:reverseheatKconvex} should be understood as the dual of Mendel and Naor's bound \eqref{eq:mendel-naor} for the heat smoothing conjecture. We conjecture that one can in fact take $\eta(p,X)=1$ in Theorem \ref{thm:reverseheatKconvex} for every $p\in(1,\infty)$ and $K$-convex Banach space $(X,\|\cdot\|_X)$, but a proof of such a claim appears intractable with the technique presented here. A real valued version of this conjecture for $p=1$ has previously appeared in \cite[Section~5]{Simons}. As in the case of Theorem \ref{thm:reverseheatgeneral}, Theorem \ref{thm:reverseheatKconvex} also implies moment comparison for functions of low degree. We postpone the relevant result\mbox{ (Corollary \ref{cor:momentcompKconvex}) to Section \ref{sec:3}.}

\subsubsection{Bernstein--Markov type inequalities for the hypercube Laplacian and the gradient} Under the additional assumption that the target space $(X,\|\cdot\|_X)$ is $K$-convex, we can obtain the following asymptotic improvement of the bound of Theorem \ref{thm:laplaciangeneral}.

\begin{theorem} \label{thm:laplacianKconvex}
Let $(X,\|\cdot\|_X)$ be a $K$-convex Banach space. For every $p\in(1,\infty)$, there exist $\alpha(p,X) \in [1,2)$ and $C(p,X)\in(0,\infty)$ such that for every $n,d\in\N$ with $d\in\{1,\ldots,n\}$ and every function $f:\{-1,1\}^n \to X$ of degree at most $d$, we have
\begin{equation} \label{eq:laplacianKconvex}
\|\Delta f\|_{L_p(\{-1,1\}^n;X)} \leq C(p,X) d^{\alpha(p,X)} \|f\|_{L_p(\{-1,1\}^n;X)}.
\end{equation}
\end{theorem}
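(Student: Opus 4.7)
The strategy is to write $\Delta = \bigl(\Delta\, e^{-t\Delta}\bigr) \circ e^{t\Delta}$ for a parameter $t>0$ to be optimized, to bound the first factor via the analyticity of the heat semigroup on $L_p(\{-1,1\}^n;X)$, and to bound the second factor by dualizing Theorem~\ref{thm:reverseheatKconvex}. In this way, essentially all of the work has already been absorbed into the hypothesis; Theorem~\ref{thm:laplacianKconvex} then follows from a short duality/analyticity calculation.

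The first input is Pisier's theorem \cite{Pis82}, which asserts that for $K$-convex $X$ and $p\in(1,\infty)$ the semigroup $\{e^{-t\Delta}\}_{t\geq 0}$ is bounded analytic on $L_p(\{-1,1\}^n;X)$ with constants depending only on $(p,X)$ and, in particular,
\begin{equation*}
\forall\, t>0,\qquad \bigl\|\Delta\, e^{-t\Delta}\bigr\|_{L_p(\{-1,1\}^n;X)\to L_p(\{-1,1\}^n;X)} \leq \frac{K(p,X)}{t}.
\end{equation*}
The second input is the dual of Theorem~\ref{thm:reverseheatKconvex}. For $f:\{-1,1\}^n\to X$ of degree at most $d$, the finite Walsh sum $e^{t\Delta}f$ is again of degree at most $d$. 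Applying inequality~\eqref{eq:reverseheatKconvex} with $e^{t\Delta}f$ in place of $f$ and using the identity $e^{-t\Delta}(e^{t\Delta}f)=f$ yields
\begin{equation*}
\|e^{t\Delta}f\|_{L_p(\{-1,1\}^n;X)} \leq c(p,X)^{-1}\, e^{C(p,X)\, d \max\{t,\, t^{\eta}\}}\,\|f\|_{L_p(\{-1,1\}^n;X)},
\end{equation*}
where $\eta=\eta(p,X)\in(\tfrac{1}{2},1]$.

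Combining these two bounds with $\Delta f = (\Delta\, e^{-t\Delta})(e^{t\Delta}f)$ and choosing $t_\star\eqdef d^{-1/\eta}\in(0,1]$ (so that $\max\{t_\star,\, t_\star^{\eta}\}=t_\star^{\eta}$ and $C(p,X)\,d\,t_\star^{\eta}=C(p,X)$ is a constant depending only on $(p,X)$) gives
\begin{equation*}
\|\Delta f\|_{L_p(\{-1,1\}^n;X)} \leq \frac{K(p,X)}{c(p,X)}\cdot \frac{e^{C(p,X)}}{t_\star}\,\|f\|_{L_p(\{-1,1\}^n;X)} = C'(p,X)\, d^{1/\eta}\,\|f\|_{L_p(\{-1,1\}^n;X)},
\end{equation*}
which is the claimed inequality with $\alpha(p,X)\eqdef 1/\eta(p,X)\in[1,2)$. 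The only genuine obstacle in this line of reasoning is Theorem~\ref{thm:reverseheatKconvex} itself; once that is in hand, no further subtlety arises from manipulating the formal inverse $e^{t\Delta}$, since it acts on the finite-dimensional space of Walsh polynomials of degree at most $d$, on which both $e^{\pm t\Delta}$ are trivially bounded.
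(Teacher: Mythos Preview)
Your proof is correct but takes a different route from the paper. The paper proves Theorem~\ref{thm:laplacianKconvex} directly, using the same Hahn--Banach/Riesz duality scheme as in Theorem~\ref{thm:laplaciangeneral}, but with the contractivity domain $[-1,1]$ replaced by the sector-image $V=\{e^{-z}:|\arg z|<\theta\}\subset\mathbb{D}$ supplied by Pisier's theorem. The norm of the functional $p\mapsto p'(1)$ on $(\mathrm{span}\{1,w,\ldots,w^d\},\|\cdot\|_{\mathscr{C}(\overline V)})$ is then bounded via Szeg\H{o}'s inequality, which gives $|p'(1)|\le K\,d^{2-2\theta/\pi}\|p\|_{\mathscr{C}(\overline V)}$ because the exterior angle of $\partial V$ at $1$ is $2\pi-2\theta$. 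This yields the explicit exponent $\alpha(p,X)=2-\tfrac{2\theta(p,X)}{\pi}$.

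Your argument instead factors $\Delta=(\Delta e^{-t\Delta})\,e^{t\Delta}$, uses the Cauchy-formula consequence $\|\Delta e^{-t\Delta}\|\le K/t$ of analyticity (this is exactly \eqref{eq:boundDetD} in the paper), and controls $\|e^{t\Delta}f\|$ by dualizing the already-proven Theorem~\ref{thm:reverseheatKconvex}. Optimizing $t=d^{-1/\eta}$ gives $\alpha=1/\eta(p,X)=2-\tfrac{\theta(r(p,X))}{\pi}$, where $\theta(r)$ is the opening angle of the lens domain $\Omega(r)$ from Corollary~\ref{cor:lens}. This is a legitimate and self-contained reduction: Theorem~\ref{thm:reverseheatKconvex} is established earlier and does not rely on Theorem~\ref{thm:laplacianKconvex}, so there is no circularity. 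The trade-off is that your exponent is governed by the lens inscribed in the sector rather than the sector itself, so it may be marginally larger than the paper's $2-2\theta/\pi$, though of course still in $[1,2)$; on the other hand, you avoid invoking Szeg\H{o}'s theorem as a separate black box, and the argument makes transparent the equivalence-in-spirit between Theorems~\ref{thm:reverseheatKconvex} and~\ref{thm:laplacianKconvex} (compare Remark~\ref{rem:laplacianimpliesreverseheat}, which records the reverse implication in the conjecturally optimal regime).
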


We conjecture that the conclusion of Theorem \ref{thm:laplacianKconvex} holds true with $\alpha(p,X)=1$ for every $p\in(1,\infty)$ and $K$-convex Banach space $(X,\|\cdot\|_X)$ (see also Remark \ref{rem:laplacianimpliesreverseheat} below). Since every $K$-convex Banach space has finite cotype, the conclusion of Theorem \ref{thm:laplacianKconvex} is consistent with Theorem \ref{thm:cotype}. However, there exist Banach spaces (e.g.~$X=\ell_1$) which have finite cotype but are not $K$-convex. It remains an interesting (and potentially challenging) open problem to understand whether the dependence on the degree in the vector valued Bernstein--Markov inequality \eqref{eq:laplaciangeneral} (even for, say, $p=2$) can be improved to $o(d^2)$ under the minimal assumption that $(X,\|\cdot\|_X)$ has finite cotype.

We conclude this section by presenting a Bernstein--Markov type inequality where the hypercube Laplacian is replaced by the vector valued gradient appearing in Pisier's inequality \eqref{eq:pisier}.

\begin{theorem} \label{thm:gradientKconvex}
Let $(X,\|\cdot\|_X)$ be a $K$-convex Banach space. For every $p\in(1,\infty)$ there exist $\alpha(p,X) \in [1,2)$ and $C(p,X)\in(0,\infty)$ such that for every $n,d\in\N$ with $d\in\{1,\ldots,n\}$ and every function $f:\{-1,1\}^n \to X$ of degree at most $d$, we have
\begin{equation} \label{eq:gradientKconvex}
\Big( \frac{1}{2^n} \sum_{\delta\in\{-1,1\}^n} \Big\| \sum_{i=1}^n \delta_i \partial_i f\Big\|_{L_p(\{-1,1\}^n;X)}^p \Big)^{1/p} \leq C(p,X) d^{\alpha(p,X)} (\log d+1) \|f\|_{L_p(\{-1,1\}^n;X)}.
\end{equation}
\end{theorem}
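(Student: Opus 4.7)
The plan is to combine the Laplacian Bernstein--Markov estimate of Theorem~\ref{thm:laplacianKconvex} (supplying the factor $d^{\alpha(p,X)}$) with a Pisier-type estimate that converts a bound on $\|\Delta g\|_{L_p}$ into a bound on the Rademacher gradient $\sum_i\delta_i\partial_ig$ at the cost of a $(\log d+1)$ factor. A natural intermediate target is to prove that for every $K$-convex Banach space $X$, every $p\in(1,\infty)$ and every $g:\{-1,1\}^n\to X$ of degree at most $d$ with $\mathbb{E}g=0$,
\begin{equation*}
\Bigl(\frac{1}{2^n}\sum_{\delta\in\{-1,1\}^n}\Bigl\|\sum_{i=1}^n\delta_i\partial_ig\Bigr\|_{L_p(\{-1,1\}^n;X)}^p\Bigr)^{1/p}\leq C(p,X)(\log d+1)\|\Delta g\|_{L_p(\{-1,1\}^n;X)};
\end{equation*}
chaining this with Theorem~\ref{thm:laplacianKconvex} would then immediately yield \eqref{eq:gradientKconvex}.

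To prove this intermediate inequality I would use the Figiel-type duality strategy announced in the introduction together with the analyticity of the heat semigroup on $K$-convex targets. One begins with the semigroup representation $g=\int_0^\infty\Delta e^{-s\Delta}g\,\mathrm{d}s$, which holds since $\mathbb{E}g=0$, and exploits Pisier's theorem \cite{Pis82} asserting that $e^{-z\Delta}$ extends to a bounded analytic semigroup on $L_p(\{-1,1\}^n;X)$, so that $\|\Delta e^{-s\Delta}\|_{L_p\to L_p}\leq C(p,X)/s$. Combined with the spectral localization of $g$ to frequencies in $\{1,\ldots,d\}$ --- which yields a pointwise (in $s$) bound essentially of the form $\|\Delta e^{-s\Delta}g\|_p\lesssim \min\{d,1/s\}e^{-s}\|g\|_p$ on the relevant subspace --- the integral $\int_0^\infty\|\Delta e^{-s\Delta}g\|_p\,\mathrm{d}s$ is of order $\log d$, consistent with the logarithmic factor sought. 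To close the argument, one pairs $\sum_i\delta_i\partial_ig$ against a dual vector and uses the boundedness of the Rademacher projection $\msf{Rad}$ on $L_p(\{-1,1\}^n;X)$ (the defining property of $K$-convexity) to absorb the $\delta_i$'s, transferring a bound on $\Delta$ into one on the gradient.

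The main obstacle I anticipate is avoiding circularity: inserting the semigroup representation directly into the gradient only bounds $\|\nabla g\|_p$ by a logarithmic multiple of itself, which is useless. The correct implementation must pair the gradient against a carefully chosen probe in the $\delta$-variable so that the Rademacher cancellations are captured by $\msf{Rad}$ rather than by a triangle inequality; this is both where the $(\log d+1)$ factor should emerge cleanly and where the $K$-convexity hypothesis is essential. The technical heart of the argument is likely to be a contour-integration estimate of the semigroup on a sector depending on $K(p,X)$, analogous to the complex-analytic step underlying the proof of Theorem~\ref{thm:laplacianKconvex} itself.
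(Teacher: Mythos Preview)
Your overall plan is exactly the paper's: first prove the ``dual Pisier inequality''
\[
\Big(\frac{1}{2^n}\sum_{\delta}\Big\|\sum_{i}\delta_i\partial_i g\Big\|_{L_p}^p\Big)^{1/p}\le B(p,X)(\log d+1)\,\|\Delta g\|_{L_p}
\]
for degree-$d$ functions, then chain it with Theorem~\ref{thm:laplacianKconvex}. You also correctly identify that the only place $K$-convexity enters this intermediate inequality is through the boundedness of $\msf{Rad}$.

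Where your sketch goes astray is in the mechanism for the intermediate step. The ``technical heart'' is \emph{not} a contour-integration/analyticity estimate on a sector; none is needed here beyond what is already baked into Theorem~\ref{thm:laplacianKconvex}. The missing concrete ingredient is Pisier's two-variable probe
\[
g_t(\e,\delta)=\sum_{A}\widehat{g}(A)\prod_{i\in A}\big(e^{-t}\e_i+(1-e^{-t})\delta_i\big),
\]
which satisfies $\|g_t\|_{L_p(\e,\delta)}\le\|g\|_{L_p(\e)}$ by convexity and whose $\delta$-Rademacher projection is exactly $(e^t-1)\sum_i\e_i\delta_i\partial_i e^{-t\Delta}g(\e)$. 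Applying $\msf{Rad}_\delta$ thus gives
\[
\Big\|\sum_i\delta_i\partial_i e^{-t\Delta}g\Big\|_{L_p}\le \frac{K(p,X)}{e^t-1}\,\|g\|_{L_p},
\]
which after integrating over $t\in[s,\infty)$ yields $\big\|\sum_i\delta_i\partial_i\Delta^{-1}e^{-s\Delta}g\big\|_{L_p}\le K\log\!\big(\tfrac{e^s}{e^s-1}\big)\|g\|_{L_p}$. The $\log d$ factor then arises not from analyticity but from the \emph{elementary} reverse heat bound of Theorem~\ref{thm:reverseheatgeneral} (namely $T_d(e^s)\le e^{d^2 s}$), used to strip the $e^{-s\Delta}$ and optimize in $s$. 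Your analyticity bound $\|\Delta e^{-s\Delta}\|\lesssim 1/s$ controls the Laplacian, not the Rademacher gradient, and you have correctly spotted that trying to pass from one to the other here is circular; the two-variable probe is precisely the device that breaks that circularity.
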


\subsubsection{A reverse Bernstein--Markov inequality for $\Delta$.} In \cite{MN14}, Mendel and Naor asked if for every $K$-convex Banach space $(X,\|\cdot\|_X)$ and $p\in(1,\infty)$ there exists $c(p,X)\in(0,\infty)$ such that for every $n\in\N$ and $d\in\{0,1,\ldots,n-1\}$, every function $f:\{-1,1\}^n\to X$ \mbox{in the $d$-th tail space satisfies}
\begin{equation}
\|\Delta f\|_{L_p(\{-1,1\}^n;X)} \geq c(p,X) d \|f\|_{L_p(\{-1,1\}^n;X)}.
\end{equation}
Similar estimates for scalar valued functions had also been obtained by Meyer in \cite{Mey84}. The following theorem, whose proof relies on a recent inequality of Erd\'elyi, contains a result in this direction under the additional assumption that \mbox{the spectrum of the function $f$ is also bounded above.}

\begin{theorem} \label{thm:reversebernsteinKconvex}
Let $(X,\|\cdot\|_X)$ be a $K$-convex Banach space. For every $p\in(1,\infty)$ there exists $c(p,X)\in(0,\infty)$ such that for every $n,d,m\in\N$ with $d+m\leq n$ and every function $f:\{-1,1\}^n\to X$ of degree at most $d+m$ which is also in the $d$-th tail space, we have
\begin{equation} \label{eq:reverseLaplacian}
\|\Delta f\|_{L_p(\{-1,1\}^n;X)} \geq c(p,X) \frac{d}{m} \|f\|_{L_p(\{-1,1\}^n;X)}.
\end{equation}
\end{theorem}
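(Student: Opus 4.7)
\emph{Proof proposal.} The strategy is to invert $\Delta$ on the relevant spectral window through an interpolating polynomial, and thereby reduce the lower bound in \eqref{eq:reverseLaplacian} to an operator-norm estimate $\|P(\Delta)\|_{L_p(\{-1,1\}^n;X) \to L_p(\{-1,1\}^n;X)} \le C(p,X)\,m/d$. Let $P$ be the unique polynomial of degree at most $m$ with $P(k) = 1/k$ for each $k \in \{d, d+1, \ldots, d+m\}$. Since $f$ lies in the $d$-th tail space and has degree at most $d+m$, the spectrum of $\Delta$ on $f$ is contained in $\{d, d+1, \ldots, d+m\}$, on which $xP(x)\equiv 1$. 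Hence $P(\Delta)(\Delta f) = f$ and
\begin{equation}
\|f\|_{L_p(\{-1,1\}^n;X)} \le \|P(\Delta)\|_{L_p(\{-1,1\}^n;X)\to L_p(\{-1,1\}^n;X)}\,\|\Delta f\|_{L_p(\{-1,1\}^n;X)},
\end{equation}
so it suffices to establish the operator bound.

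The $K$-convexity of $X$ enters through Pisier's theorem: the heat semigroup $\{e^{-t\Delta}\}_{t\ge 0}$ extends to a bounded analytic semigroup on $L_p(\{-1,1\}^n;X)$ with constants depending only on $p$ and $X$. This furnishes a dimension-free sectorial resolvent estimate of the form $\|(zI - \Delta)^{-1}\|_{L_p(X)\to L_p(X)} \le C(p,X)/\dist(z,[0,\infty))$ for $z$ in a suitable complex region surrounding $[0,\infty)$, and legitimizes representing $P(\Delta)$ via the Dunford functional calculus,
\begin{equation}
P(\Delta) = \frac{1}{2\pi i}\oint_\Gamma P(z)\,(zI - \Delta)^{-1}\,dz,
\end{equation}
with $\Gamma$ a contour enclosing $\{d,\ldots,d+m\}$ inside the admissible region. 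A natural choice is an ellipse with foci at $d$ and $d+m$ and semi-minor axis of order $1$; its length is $O(m)$ and the resolvent norm along $\Gamma$ is then $O(1)$.

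The remaining ingredient is a uniform bound $\max_{z\in\Gamma}|P(z)| \le C/d$, and this is precisely what the recent polynomial inequality of Erd\'elyi delivers: for the Lagrange interpolant of $1/k$ at the $m+1$ consecutive integer nodes $k = d, d+1,\ldots,d+m$, one obtains the desired boundary estimate on a contour lying at a constant distance from the interpolation interval. Assembling the three ingredients (polynomial size, resolvent norm, and contour length) yields $\|P(\Delta)\|_{L_p(X)\to L_p(X)} \le C(p,X)\,m/d$, from which \eqref{eq:reverseLaplacian} follows.

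The main technical hurdle is this last step: the contour $\Gamma$ must be chosen to balance two competing requirements, namely (i) staying far enough from the spectrum of $\Delta$ that the sectorial resolvent bound remains effective, against (ii) remaining close enough to $[d, d+m]$ that the degree-$m$ interpolant $P$, pinned to the values $1/k$ on the integer grid, does not inflate well past $C/d$. Erd\'elyi's inequality is exactly the tool that sharply quantifies (ii) on a contour of constant width, thereby allowing the two estimates to balance and produce the optimal $m/d$ dependence.
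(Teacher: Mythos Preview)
Your scheme has two genuine gaps.

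\textbf{The resolvent bound is not available where you need it.} Bounded analyticity of $\{e^{-t\Delta}\}$ on $L_p(\{-1,1\}^n;X)$ (Pisier's theorem) yields the sectorial estimate $\|(zI-\Delta)^{-1}\|\le C(p,X)/|z|$ only for $z$ \emph{outside} a sector $\{|\arg z|\le\omega\}$ containing $[0,\infty)$; it does \emph{not} give $\|(zI-\Delta)^{-1}\|\le C/\dist(z,[0,\infty))$ uniformly in $n$. Any closed contour $\Gamma$ surrounding $\{d,\dots,d+m\}$ at distance $O(1)$ lies deep inside that sector once $d$ is large (since $|\arg z|\asymp 1/d$ there), so the dimension-free resolvent bound you invoke simply fails along $\Gamma$. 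Moreover, $\Gamma$ must cross the real axis to close, and at those points the full resolvent on $L_p(\{-1,1\}^n;X)$ need not even exist (the spectrum of $\Delta$ is all of $\{0,1,\dots,n\}$, not just $\{d,\dots,d+m\}$). Restricting to the spectral window does not help: there is no dimension-free $L_p$ bound for $(zI-\Delta)^{-1}$ on that subspace near the real axis either.

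\textbf{Erd\'elyi's inequality is not what you describe.} The result used in the paper (Theorem~\ref{thm:erd}) is a reverse Bernstein inequality for \emph{incomplete polynomials} $P(x)=\sum_{k=d}^{d+m}a_kx^k$ on $[0,1]$, asserting $|P(1)|\le 6\sqrt{m/d}\,\|\sqrt{1-x^2}\,P'(x)\|_{\ms{C}([0,1])}$. It says nothing about Lagrange interpolants of $1/k$ at integer nodes on a complex contour.

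The paper sidesteps both issues by never leaving the region where the semigroup is controlled. It first proves (Lemma~\ref{lem:cauchyformula}, via the Cauchy formula inside the sector of analyticity and a fractional-power interpolation inequality) that $\|\Delta^{1/2}e^{-t\Delta}f\|_{L_p}\le K(p,X)(e^{2t}-1)^{-1/2}\|f\|_{L_p}$. Substituting $x=e^{-t}$ turns this into a uniform bound $\|\sqrt{1-x^2}\,\Delta x^{\Delta-1}f\|_{L_p}\le K\|\Delta^{1/2}f\|_{L_p}$ for $x\in[0,1]$. Erd\'elyi's inequality then bounds the functional $\xi:\sqrt{1-x^2}\sum_{k=d}^{d+m}ka_kx^{k-1}\mapsto\sum a_k$ on $\ms{C}([0,1])$ by $6\sqrt{m/d}$, and Hahn--Banach/Riesz produces a measure of that total variation which, integrated against $\sqrt{1-x^2}\,\Delta x^{\Delta-1}f$, recovers $f$. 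This gives $\|\Delta^{1/2}f\|_{L_p}\ge c(p,X)\sqrt{d/m}\,\|f\|_{L_p}$, and one iterates once to obtain \eqref{eq:reverseLaplacian}. The point is that the duality argument operates entirely over $x\in[0,1]$ (where $x^\Delta$ is a contraction) and leverages $K$-convexity only through the time-derivative estimate of Lemma~\ref{lem:cauchyformula}, rather than through a resolvent bound near the spectrum.
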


In particular, Theorem \ref{thm:reversebernsteinKconvex} provides a positive answer to the question of \cite{MN14} in the special case when $m=O(1)$ and improves upon the previously known bounds of Meyer \cite{Mey84} and Mendel and Naor \cite{MN14} when $m$ is a small enough power of $d$.


\subsection{Estimates for scalar valued functions} \label{subsec:1.3} In this section we will present explicit estimates which hold true for scalar valued functions. Even though several of the following results are special cases of theorems from the previous section, we present the full statements for the convenience of the reader not interested in the general vector valued setting. 

\subsubsection{The decay of the heat semigroup} The optimal bounds that can be derived from our approach for the action of the heat semigroup on functions with bounded spectrum are the following.

\begin{theorem} \label{thm:smoothingR}
For $p\in[1,\infty]$, let $\theta_p = 2\arcsin\big(\frac{2\sqrt{p-1}}{p}\big)$. Then, for every $p\in(1,\infty)$, $n,d\in\N$ with $d\in\{0,1,\ldots,n\}$ and every function $f:\{-1,1\}^n\to\C$ of degree at most $d$, we have
\begin{equation} \label{eq:lowersmoothingR}
\forall \ t\geq0, \ \ \ \|e^{-t\Delta}f\|_{L_p(\{-1,1\}^n;\C)} \geq \left(\frac{(e^{t}+1)^{\frac{\pi}{2\pi-\theta_p}} - (e^{t}-1)^{\frac{\pi}{2\pi-\theta_p}}}{(e^{t}+1)^{\frac{\pi}{2\pi-\theta_p}} + (e^{t}-1)^{\frac{\pi}{2\pi-\theta_p}}}\right)^d \|f\|_{L_p(\{-1,1\}^n;\C)}.
\end{equation}
Moreover, for every function $f:\{-1,1\}^n\to\C$ in the $d$-th tail space, we have
\begin{equation} \label{eq:uppersmoothingR}
\forall \ t\geq0, \ \ \ \|e^{-t\Delta}f\|_{L_p(\{-1,1\}^n;\C)} \leq \left(\frac{(1+e^{-t})^{\frac{\pi}{\theta_p}} - (1-e^{-t})^{\frac{\pi}{\theta_p}}}{(1+e^{-t})^{\frac{\pi}{\theta_p}} + (1-e^{-t})^{\frac{\pi}{\theta_p}}}\right)^d \|f\|_{L_p(\{-1,1\}^n;\C)}.
\end{equation}
\end{theorem}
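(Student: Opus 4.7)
The plan is to view the heat flow as a vector-valued polynomial in $\sigma=e^{-z}$ and to transplant Schwarz's lemma and the Bernstein--Walsh inequality along explicit conformal maps arising from the complex contractivity region in the $\sigma$-plane. For any $f:\{-1,1\}^n\to\C$ let
\[F(\sigma)\eqdef\sum_{A\subseteq\{1,\ldots,n\}}\sigma^{|A|}\widehat{f}(A)w_A,\qquad \sigma\in\C,\]
so $F(1)=f$ and $F(e^{-t})=e^{-t\Delta}f$ for every $t\in\R$. The starting input is a two-point (Weissler-type) complex contractivity that tensorises to $\{-1,1\}^n$: for $p\in(1,\infty)$ there is a closed ``lens'' $\Omega_p\subset\overline{\mathbb{D}}$ with corners at $\pm1$ and interior angle $\theta_p$ at those corners such that $\|F(\sigma)\|_{L_p(\{-1,1\}^n;\C)}\le\|f\|_{L_p(\{-1,1\}^n;\C)}$ whenever $\sigma\in\Omega_p$. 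Unfolding the lens through the chain $\sigma\mapsto(1-\sigma)/(1+\sigma)\mapsto(\cdot)^{\pi/\theta_p}\mapsto(1-\cdot)/(1+\cdot)$ gives the conformal map
\[\phi_p(\sigma)=\frac{(1+\sigma)^{\pi/\theta_p}-(1-\sigma)^{\pi/\theta_p}}{(1+\sigma)^{\pi/\theta_p}+(1-\sigma)^{\pi/\theta_p}}\colon\Omega_p\longrightarrow\overline{\mathbb{D}}\]
with $\phi_p(0)=0$ and $\phi_p(\pm1)=\pm1$; symmetrically, the exterior of $\Omega_p$ in $\widehat{\C}$ has interior angle $2\pi-\theta_p$ at $\pm1$ and is conformally mapped onto $\widehat{\C}\setminus\overline{\mathbb{D}}$ by a map $\psi_p$ satisfying, for real $\sigma>1$,
\[|\psi_p(\sigma)|=\frac{(\sigma+1)^{\pi/(2\pi-\theta_p)}+(\sigma-1)^{\pi/(2\pi-\theta_p)}}{(\sigma+1)^{\pi/(2\pi-\theta_p)}-(\sigma-1)^{\pi/(2\pi-\theta_p)}}.\]

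For the upper bound \eqref{eq:uppersmoothingR}, take $f$ in the $d$-th tail space. Then $F$ is a vector-valued polynomial vanishing to order at least $d$ at $\sigma=0$, and the pullback $F\circ\phi_p^{-1}:\mathbb{D}\to L_p(\{-1,1\}^n;\C)$ is analytic, bounded by $\|f\|_{L_p}$, and vanishes to order at least $d$ at $\zeta=0$. A Hahn--Banach reduction to scalar analytic functions and Schwarz's lemma then yield $\|F(\sigma)\|_{L_p}\le|\phi_p(\sigma)|^d\|f\|_{L_p}$ for every $\sigma\in\Omega_p$; setting $\sigma=e^{-t}\in(0,1)$ is precisely \eqref{eq:uppersmoothingR}.

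For the lower bound \eqref{eq:lowersmoothingR}, take $f$ of degree at most $d$ and set $h\eqdef e^{-t\Delta}f$, which also has degree at most $d$. The associated polynomial $F_h(\sigma)\eqdef\sum_A\sigma^{|A|}\widehat{h}(A)w_A$ has degree at most $d$ in $\sigma$ and satisfies $\|F_h(\sigma)\|_{L_p}\le\|h\|_{L_p}$ on $\Omega_p$ by the complex contractivity applied to $h$. Since $e^t$ lies in the unbounded component of $\widehat{\C}\setminus\overline{\Omega_p}$, the vector-valued Bernstein--Walsh inequality (obtained again by Hahn--Banach) gives
\[\|F_h(e^t)\|_{L_p}\le|\psi_p(e^t)|^d\,\|h\|_{L_p}.\]
But $F_h(e^t)=e^{t\Delta}h=e^{t\Delta}e^{-t\Delta}f=f$, so rearranging produces $\|e^{-t\Delta}f\|_{L_p}\ge|\psi_p(e^t)|^{-d}\|f\|_{L_p}$, which is \eqref{eq:lowersmoothingR}.

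The main obstacle is the sharp two-point complex contractivity together with the precise identification of the interior angle of $\Omega_p$ at $\pm1$ as $\theta_p=2\arcsin(2\sqrt{p-1}/p)$, the rest being standard: a local analysis at $\sigma=\pm1$ must match the corner angle produced by the extremal functions in Weissler's two-point inequality. Once this sharp complex hypercontractive input is granted, the remaining argument is a routine (but clean) chain of conformal mappings and classical extremal principles of Schwarz and Bernstein--Walsh.
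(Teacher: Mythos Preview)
Your proposal is correct and follows essentially the same route as the paper: both arguments feed Weissler's sharp complex contractivity on the lens $\Omega(r_p)$ (with $r_p=p/(2\sqrt{p-1})$ and corner angle $\theta_p$) into the explicit conformal maps $\phi_p$, $\psi_p$ of Lemma~\ref{lem:computeconformal}, and both reduce to the maximum principle applied to $p(w)/\phi_\Omega(w)^d$ (Schwarz) and $p(z)/\phi_{\Omega^c}(z)^d$ (Bernstein--Walsh). The only difference is packaging: the paper first proves the scalar extremal inequality, then invokes Hahn--Banach and Riesz representation to produce a measure $\tau_t$ (resp.\ $\rho_t$) against which $w^\Delta f$ is integrated, whereas you apply Schwarz/Bernstein--Walsh directly to the vector-valued polynomial $F(\sigma)$ by testing against functionals; these are two presentations of the same maneuver.
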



The above lower bound \eqref{eq:lowersmoothingR} on the decay of the heat semigroup combined with classical hypercontractivite estimates \cite{Bon70} implies the following improved moment comparison result for functions of low degree.

\begin{corollary} \label{cor:momentcompR} 
For $p\in[1,\infty]$, let $\theta_p = 2\arcsin\big(\frac{2\sqrt{p-1}}{p}\big)$. Then, for every $p>q>1$, every $n,d\in\N$ with $d\in\{1,\ldots,n\}$ and every function $f:\{-1,1\}^n\to\C$ of degree at most $d$, we have
\begin{equation}
 \|f\|_{L_p(\{-1,1\}^n;\C)} \leq \left(\frac{(\sqrt{p-1}+\sqrt{q-1})^{\frac{\pi}{2\pi-\theta_p}} + (\sqrt{p-1}-\sqrt{q-1})^{\frac{\pi}{2\pi-\theta_p}}}{(\sqrt{p-1}+\sqrt{q-1})^{\frac{\pi}{2\pi-\theta_p}} - (\sqrt{p-1}-\sqrt{q-1})^{\frac{\pi}{2\pi-\theta_p}}}\right)^d \|f\|_{L_q(\{-1,1\}^n;\C)}.
\end{equation}
\end{corollary}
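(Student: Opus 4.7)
The plan is to combine Bonami's classical hypercontractive inequality \cite{Bon70} with the lower bound \eqref{eq:lowersmoothingR} from Theorem \ref{thm:smoothingR}. Recall that Bonami's theorem asserts that for every $p>q>1$ one has
\[
\|e^{-t\Delta}g\|_{L_p(\{-1,1\}^n;\C)} \leq \|g\|_{L_q(\{-1,1\}^n;\C)}
\]
whenever $e^{-2t}\le (q-1)/(p-1)$. I would take the sharp Bonami time $t^\ast=\tfrac{1}{2}\log\tfrac{p-1}{q-1}$, for which $e^{t^\ast}=\sqrt{(p-1)/(q-1)}$, and apply the inequality to $g=f$.

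Simultaneously, since $f$ has degree at most $d$, Theorem \ref{thm:smoothingR} applied at time $t^\ast$ furnishes
\[
\|e^{-t^\ast\Delta}f\|_{L_p(\{-1,1\}^n;\C)} \ge \Lambda(t^\ast)^{d}\, \|f\|_{L_p(\{-1,1\}^n;\C)},
\]
where $\Lambda(t)$ denotes the parenthesised quantity in \eqref{eq:lowersmoothingR}. Chaining this with the hypercontractive bound yields $\|f\|_{L_p(\{-1,1\}^n;\C)} \leq \Lambda(t^\ast)^{-d}\|f\|_{L_q(\{-1,1\}^n;\C)}$, which is already the claimed inequality up to simplification of $\Lambda(t^\ast)$.

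It then remains to rewrite $\Lambda(t^\ast)$ in the advertised form. Writing $\alpha=\pi/(2\pi-\theta_p)$ and using
\[
e^{t^\ast}+1 = \frac{\sqrt{p-1}+\sqrt{q-1}}{\sqrt{q-1}},\qquad e^{t^\ast}-1 = \frac{\sqrt{p-1}-\sqrt{q-1}}{\sqrt{q-1}},
\]
the common factor $(\sqrt{q-1})^{-\alpha}$ appears in every term of $(e^{t^\ast}\pm 1)^{\alpha}$ and cancels between the numerator and the denominator of $\Lambda(t^\ast)$. Inverting the resulting ratio produces exactly the constant in the statement of Corollary \ref{cor:momentcompR}.

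There is no genuine obstacle here: the proof is a direct concatenation of two inequalities at the critical Bonami time, followed by an elementary algebraic simplification. The only point that warrants verification is that the choice $t^\ast=\tfrac{1}{2}\log\tfrac{p-1}{q-1}$ is precisely the value for which Bonami's theorem is tight, so that the constant extracted from \eqref{eq:lowersmoothingR} is not diluted by a suboptimal choice of $t$.
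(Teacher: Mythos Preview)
Your proposal is correct and matches the paper's proof essentially verbatim: the paper also concatenates \eqref{eq:lowersmoothingR} with Bonami's hypercontractive inequality at the critical time $t=\tfrac{1}{2}\log\tfrac{p-1}{q-1}$, exactly as in the deduction of Corollary~\ref{cor:realchaos} from Theorem~\ref{thm:reverseheatgeneral}. Your explicit algebraic simplification of $\Lambda(t^\ast)$ is the only addition, and it is routine.
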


Even though Bonami's hypercontractive estimates \cite{Bon70} break down at the endpoint $p=1$, a well known trick (see \cite[Theorem~9.22]{O'D14})\mbox{ implies that if $f:\{-1,1\}^n\to \C$ has degree at most $d$,}
\begin{equation}
\|f\|_{L_2(\{-1,1\}^n;\C)} \leq e^d \|f\|_{L_1(\{-1,1\}^n;\C)}.
\end{equation}
Relying on works of Beckner \cite{Bec75} and Weissler \cite{Wei79}, we\mbox{ prove the following improved bound.}

\begin{theorem} \label{thm:momentcomp1}
For every $n,d\in\N$ with $d\in\{1,\ldots,n\}$ and every function $f:\{-1,1\}^n\to\C$ of degree at most $d$, we have
\begin{equation} \label{eq:momentcomp1}
\|f\|_{L_2(\{-1,1\}^n;\C)} \leq (2.69076)^d \|f\|_{L_1(\{-1,1\}^n;\C)}.
\end{equation}
\end{theorem}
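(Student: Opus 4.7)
The plan is to improve the standard $e^d$ bound (attributed to a trick in \cite[Theorem~9.22]{O'D14}) by sharpening both the Bonami--Beckner hypercontractivity step and the Lyapunov interpolation step via the results of Beckner \cite{Bec75} and Weissler \cite{Wei79}.

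First I would record the classical chain. For $\rho\in(0,1]$ and $f$ of degree at most $d$, Bonami--Beckner hypercontractivity gives $\|e^{-t\Delta}f\|_{L_2}\leq\|f\|_{L_{1+\rho^2}}$ with $\rho=e^{-t}$. Combined with the trivial reverse bound $\|f\|_{L_2}\leq\rho^{-d}\|e^{-t\Delta}f\|_{L_2}$ valid for $\deg f\leq d$ and with Lyapunov's inequality $\|f\|_{L_{1+\rho^2}}\leq\|f\|_{L_1}^{(1-\rho^2)/(1+\rho^2)}\|f\|_{L_2}^{2\rho^2/(1+\rho^2)}$, this yields after elementary algebra
\[
\|f\|_{L_2}\leq\rho^{-d(1+\rho^2)/(1-\rho^2)}\|f\|_{L_1}.
\]
The infimum of the prefactor over $\rho\in(0,1)$ equals $e^d$, attained only in the limit $\rho\to1^-$, recovering the known bound.

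To sharpen the constant, I would replace Bonami's inequality by Beckner's sharp two-point inequality and its complex-parameter extension due to Weissler. Weissler \cite{Wei79}, refining the approach of \cite{Bec75}, characterized the set of complex $z$ for which the noise operator $e^{-z\Delta}$ on the two-point space (and hence, by tensorization, on $\{-1,1\}^n$) is a contraction between complex $L_p$ spaces. This region strictly contains the classical Bonami interval of real parameters. By analytically continuing the semigroup to a complex time $z=t+is$ chosen optimally in the Weissler region, one obtains a strict improvement of the Bonami estimate $\|e^{-t\Delta}f\|_{L_2}\leq\|f\|_{L_{1+\rho^2}}$ precisely in the endpoint regime $\rho\to1^-$ where the classical argument becomes degenerate. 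Substituting this refined inequality into the chain above and reoptimizing over the complex parameter (as well as the Lyapunov exponent) produces a strictly smaller prefactor than $e^d$.

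The main obstacle is the explicit numerical optimization: since the improvement from $e\approx 2.71828$ to $2.69076$ is only about $1\%$, the optimal complex parameter sits close to the boundary of the real Bonami regime, and a delicate perturbative analysis of the Weissler region near this boundary is required. The constant $2.69076$ should then emerge as the numerical solution of the transcendental optimality condition arising from the sharp Beckner--Weissler two-point inequality, rather than from any closed-form expression.
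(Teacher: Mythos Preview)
Your sketch contains a genuine gap in the improvement step. You propose to sharpen the Bonami estimate $\|e^{-t\Delta}f\|_{L_2}\le\|f\|_{L_{1+\rho^2}}$ by passing to complex time in the Weissler region. But when one of the exponents is $2$, Weissler's own theorem (cf.~\eqref{www}) says that $\|w^\Delta\|_{L_{1+\rho^2}\to L_2}\le 1$ holds precisely on the \emph{disc} $|w|\le\rho$, not on any larger lens. Since your ``trivial reverse bound'' $\|f\|_{L_2}\le |w|^{-d}\|w^\Delta f\|_{L_2}$ depends only on $|w|$, going complex gives no improvement whatsoever in this chain; you are stuck at $e^d$.

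The paper's argument exploits complex hypercontractivity in a different place and through a different mechanism. It uses the Beckner--Weissler $L_{p^\ast}\to L_p$ domain $V_p$ (with $p>2$, $p^\ast=p/(p-1)$), which \emph{is} a genuine lens rather than a disc. The key step is not a direct semigroup estimate but the conformal--map duality of Proposition~\ref{prop:reverseheatcrucial}: the Riemann map $\varphi_p$ of $V_p^{\,c}$ onto $\overline{\mathbb D}^{\,c}$ yields
\[
\|f\|_{L_p}\le |\varphi_p(1)|^{d}\,\|f\|_{L_{p^\ast}},
\]
where the exponent encodes the interior angle of the lens at $1$. One then interpolates via H\"older among $L_1,L_{p^\ast},L_2,L_p$ to obtain $\|f\|_{L_2}\le |\varphi_p(1)|^{pd/(2(p-2))}\|f\|_{L_1}$ and minimizes over $p>2$; the constant $2.69076$ is the numerical value of this infimum. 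The conformal step is the missing idea in your proposal: without it, complex hypercontractivity alone does not give any mechanism to beat $e^d$.
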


\subsubsection{Bernstein--Markov type inequalities} The bounds \eqref{eq:laplacianKconvex} in the Bernstein--Markov inequality for the hypercube Laplacian take the following explicit form for scalar valued functions.

\begin{theorem} \label{thm:laplacianR}
For $p\in[1,\infty]$, let $\theta_p = 2\arcsin\big(\frac{2\sqrt{p-1}}{p}\big)$. Then, for every $n,d\in\N$ with $d\in\{1,\ldots,n\}$ and every function $f:\{-1,1\}^n\to\C$ of degree at most $d$, we have
\begin{equation} \label{eq:gooddomain}
\|\Delta f\|_{L_p(\{-1,1\}^n;\C)} \leq 10 d^{2-\frac{\theta_p}{\pi}} \|f\|_{L_p(\{-1,1\}^n;\C)}.
\end{equation}
\end{theorem}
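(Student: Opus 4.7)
The plan is to mirror the strategy behind Theorem~\ref{thm:laplacianKconvex}, but to exploit the sharp complex hypercontractivity bounds of Weissler \cite{Wei79}, available for scalar--valued functions, in order to pin down the exponent $2-\theta_p/\pi$. For $f:\{-1,1\}^n\to\C$ of degree at most $d$, I would introduce the $L_p(\{-1,1\}^n;\C)$-valued polynomial
$$
P(r)\eqdef\sum_{|A|\leq d}\widehat{f}(A)\,r^{|A|}\,w_A,\qquad r\in\C,
$$
of degree $\leq d$ in $r$, satisfying $P(1)=f$ and $P'(1)=\Delta f$. By Weissler's theorem, the Mehler operator $T_r$ is a contraction on $L_p(\{-1,1\}^n;\C)$ for every $r$ in a closed lens--shaped region $W_p\subset\overline{\mathbb{D}}$ whose boundary meets the unit circle at $r=1$ with opening angle exactly $\theta_p$. (For $p=2$ this is the whole unit disk; as $p\to 1^+$ or $p\to\infty$ the region degenerates to $[0,1]$.) Consequently, $\|P(r)\|_{L_p}\leq\|f\|_{L_p}$ for every $r\in W_p$.

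Next, pair with any dual element $g\in L_{p'}$ of unit norm: then $Q_g(r)\eqdef\langle P(r),g\rangle$ is a scalar polynomial of degree $\leq d$ with $|Q_g(r)|\leq\|f\|_{L_p}$ on $W_p$, and $Q_g'(1)=\langle\Delta f,g\rangle$. Taking the supremum over $g$, the theorem reduces to the scalar Bernstein--Markov estimate
$$
|Q'(1)|\leq 10\,d^{\,2-\theta_p/\pi}\sup_{r\in W_p}|Q(r)|
$$
for polynomials $Q$ of degree $\leq d$ on the domain $W_p$, which has a corner of opening $\theta_p$ at the boundary point $r=1$. The exponent $2-\theta_p/\pi$ is the natural one dictated by the geometry: $\theta=\pi$ (disk at a boundary point) recovers Bernstein's exponent $1$, while $\theta=0$ (segment) recovers Markov's exponent $2$.

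Concretely, I would establish the required corner Bernstein--Markov inequality by constructing an explicit \emph{complex Markov quadrature}: complex nodes $z_1,\dots,z_N$ lying in the preimage of $W_p$ under the exponential map $r=e^{-z}$ together with coefficients $c_1,\dots,c_N\in\C$ representing the multiplier $k\mapsto k$ on $\{0,1,\dots,d\}$ in the form $k=\sum_jc_je^{-z_jk}$, with $\sum_j|c_j|\leq 10\,d^{\,2-\theta_p/\pi}$. Applying Weissler's contractivity termwise to the resulting identity $\Delta f=\sum_jc_je^{-z_j\Delta}f$ would then deliver the theorem with the explicit constant claimed in \eqref{eq:gooddomain}.

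The hardest part will be producing the sharp corner Bernstein--Markov estimate (equivalently, the existence of the above quadrature) with the correct exponent \emph{and} a workable numerical constant. One route is to map $W_p$ locally near $r=1$ onto a canonical wedge of opening $\theta_p$ via a power function and to invoke classical Bernstein--Markov inequalities of Dzyadyk type, while controlling explicitly the distortion of the conformal chart; an alternative is to write $\Delta f$ as a Cauchy contour integral of $e^{-z\Delta}f$ along a contour lying in the preimage of $W_p$ under $z\mapsto e^{-z}$, optimize the contour to produce the exponent $2-\theta_p/\pi$, and track constants throughout. Granting such an estimate, the reduction from it to the Laplacian bound on the cube is the routine duality argument sketched above.
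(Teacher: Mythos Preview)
Your proposal is correct and follows essentially the same route as the paper: reduce via duality (equivalently, Hahn--Banach and Riesz representation) to a scalar Bernstein--Markov inequality $|Q'(1)|\le C\,d^{2-\theta_p/\pi}\|Q\|_{\ms{C}(\overline{\Omega(r_p)})}$ for polynomials of degree $\le d$ on Weissler's lens domain $\Omega(r_p)$, whose boundary has interior angle $\theta_p$ at $r=1$. The only divergence is at what you call ``the hardest part'': the paper does not construct a quadrature or conformal chart from scratch but simply invokes Szeg\H{o}'s classical corner Bernstein--Markov theorem (Theorem after \eqref{eq:szego}) for the exponent, together with \cite[Proposition~15]{EI18} to obtain the explicit constant $10$ specifically for lens domains $\Omega(r)$ at $w=1$.
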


For $p\in[1,\infty]$ and a function $f:\{-1,1\}^n\to\C$, denote by
\begin{equation}
\|\nabla f\|_{L_p(\{-1,1\}^n;\C)} \eqdef \Big\| \Big(\sum_{i=1}^n (\partial_i f)^2\Big)^{1/2} \Big\|_{L_p(\{-1,1\}^n;\C)}.
\end{equation}
In contrast to the vector valued Theorem \ref{thm:gradientKconvex}, we can prove the following improved Bernstein--Markov type inequality for the gradient of scalar valued functions as a consequence of Theorem \ref{thm:laplacianR}.

\begin{theorem} \label{thm:gradR}
For $p\in[1,\infty]$, let $\theta_p = 2\arcsin\big(\frac{2\sqrt{p-1}}{p}\big)$. Then, for every $p\in(1,\infty)$, there exists $C_p\in(0,\infty)$ such that for every $n,d\in\N$ with $d\in\{1,\ldots,n\}$ and every function $f:\{-1,1\}^n\to\C$ of degree at most $d$, we have
\begin{equation}
\|\nabla f\|_{L_p(\{-1,1\}^n;\C)} \leq   \begin{cases} 
C_p d^{\frac{2}{p}-\frac{\theta_p}{p\pi}} \log (d+1) \|f\|_{L_p(\{-1,1\}^n;\C)}, & \mbox{if } p\in\big(1,2\big)
\\ C_pd^{1-\frac{\theta_p}{2\pi}} \|f\|_{L_p(\{-1,1\}^n;\C)}, & \mbox{if } p\in[2,\infty) \end{cases}.
\end{equation}
\end{theorem}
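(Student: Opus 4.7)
My approach starts from the pointwise identity on the Hamming cube
\[
|\nabla f|^{2} \;=\; \Re\bigl(\overline{f}\,\Delta f\bigr) \;-\; \tfrac{1}{2}\Delta\bigl(|f|^{2}\bigr),
\]
which is verified by summing the componentwise formula $\partial_{i}(|f|^{2}) = 2\Re(\overline{f}\,\partial_{i}f) - 2|\partial_{i}f|^{2}$ (itself a direct computation from the definition of the discrete derivative together with the relation $f(\varepsilon^{(i)}) = f(\varepsilon) - 2\partial_{i}f(\varepsilon)$).

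For $p \in [2,\infty)$, taking the $L_{p/2}$ norm of the identity and applying the triangle inequality together with H\"older's inequality gives
\[
\|\nabla f\|_{L_{p}}^{2} \;=\; \bigl\||\nabla f|^{2}\bigr\|_{L_{p/2}} \;\leq\; \|f\|_{L_{p}}\|\Delta f\|_{L_{p}} + \tfrac{1}{2}\bigl\|\Delta(|f|^{2})\bigr\|_{L_{p/2}}.
\]
Theorem~\ref{thm:laplacianR} applied to $f$ at exponent $p$ bounds the first factor by $10 d^{2-\theta_{p}/\pi}\|f\|_{L_{p}}$, and since $|f|^{2}$ is a Walsh polynomial of degree at most $2d$, Theorem~\ref{thm:laplacianR} at exponent $p/2$ bounds the second by $10(2d)^{2-\theta_{p/2}/\pi}\|f\|_{L_{p}}^{2}$. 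The Chebyshev self-duality $\theta_{q} = \theta_{q/(q-1)}$ forces $\theta_{p/2} \geq \theta_{p}$ exactly when $p \geq 3$, in which case the first summand dominates and, after taking square roots, one obtains $\|\nabla f\|_{L_{p}} \leq C d^{1-\theta_{p}/(2\pi)}\|f\|_{L_{p}}$. For the intermediate range $p \in [2,3)$ I would interpolate via Riesz--Thorin between the sharp endpoint $\|\nabla f\|_{L_{2}} \leq \sqrt{d}\,\|f\|_{L_{2}}$ (which is immediate from the Parseval identity $\|\nabla f\|_{L_{2}}^{2} = \langle f, \Delta f\rangle$) and the $L_{3}$ bound just established.

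For $p \in (1,2)$, Khintchine's inequality on an auxiliary Rademacher cube gives
\[
\|\nabla f\|_{L_{p}(\{-1,1\}^{n})} \;\asymp_{p}\; \Big\|\sum_{i=1}^{n}\delta_{i}\,\partial_{i}f\Big\|_{L_{p}(\{-1,1\}^{2n})},
\]
and the Walsh polynomial $\sum_{i}\delta_{i}\partial_{i}f$ has total degree at most $d$ on the enlarged cube. Combining Theorem~\ref{thm:laplacianR} with Pisier's inequality~\eqref{eq:pisier} applied on the extended cube (after a Fourier truncation to degree $\leq d$ that replaces $\log(2n)$ by $\log d$) produces the claimed bound $C_{p}d^{(2-\theta_{p}/\pi)/p}(\log d + 1)\|f\|_{L_{p}}$, with the logarithmic factor being an intrinsic feature of~\eqref{eq:pisier}.

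The main technical obstacle I anticipate is the subrange $p \in [2,3)$: here the identity approach on its own only yields the weaker exponent $1-\theta_{p/2}/(2\pi)$, so one needs to interpolate carefully against the exact $L_{2}$ identity to recover the sharp exponent $1-\theta_{p}/(2\pi)$ while keeping the constants uniform in $d$.
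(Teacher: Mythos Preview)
Your identity $|\nabla f|^2 = \Re(\bar f \Delta f) - \tfrac12\Delta(|f|^2)$ is correct, and for $p\ge 3$ it does yield the exponent $1-\theta_p/(2\pi)$, giving a genuinely different route from the paper's. The paper instead uses Lust-Piquard's Riesz transform inequality $\|\nabla f\|_{L_p}\le C_p\|\Delta^{1/2}f\|_{L_p}$ (valid for all $p\ge 2$), then the interpolation bound $\|\Delta^{1/2}f\|_{L_p}\le 4\|\Delta f\|_{L_p}^{1/2}\|f\|_{L_p}^{1/2}$, and finally Theorem~\ref{thm:laplacianR}. This handles the entire range $p\in[2,\infty)$ in one shot.

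Your proposed fix for $p\in[2,3)$ does not work. Writing $s=1/p$, one has $\theta_p = 2\arcsin(2\sqrt{s(1-s)})$, and a direct differentiation gives $\frac{d}{ds}\theta_p = 2/\sqrt{s(1-s)}$ and $\frac{d^2}{ds^2}\theta_p = -(1-2s)/(s(1-s))^{3/2}<0$ on $(0,1/2)$. Thus $s\mapsto\theta_p$ is \emph{strictly concave}, so for $p\in(2,3)$ the linear interpolation between the $L_2$ and $L_3$ exponents yields a strictly \emph{larger} exponent than $1-\theta_p/(2\pi)$; Riesz--Thorin therefore cannot recover the claimed bound. The paper avoids this entirely via the Riesz transform inequality.

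For $p\in(1,2)$ your sketch has a more basic gap: Pisier's inequality~\eqref{eq:pisier} bounds $f$ by its gradient, not the reverse, and the dual version~\eqref{eq:dualpisier} bounds $\sum_i\delta_i\partial_i f$ by $\|\Delta f\|_{L_p}$, which after Theorem~\ref{thm:laplacianR} gives exponent $2-\theta_p/\pi$, not $(2-\theta_p/\pi)/p$. The factor $1/p$ in the exponent is the whole point, and it does not come for free. The paper obtains it from a new inequality (Proposition~\ref{prop:naor}): for $p\in(1,2)$ and $\varepsilon>0$, $\|\nabla f\|_{L_p}\le \frac{C_p}{\varepsilon}\|\Delta^{1/p+\varepsilon}f\|_{L_p}$, proven by interpolating a decay estimate for $\nabla e^{-t\Delta}$ between $L_1$ and $L_2$. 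One then bounds $\|\Delta^{1/p+\varepsilon}f\|_{L_p}\le 4\|\Delta f\|_{L_p}^{1/p+\varepsilon}\|f\|_{L_p}^{1-1/p-\varepsilon}$, applies Theorem~\ref{thm:laplacianR}, and chooses $\varepsilon=1/\log(d+1)$.
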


The change in the exponent from the range $p\in(1,2)$ to the range $p\in[2,\infty)$ and its relation to discrete Riesz transforms is further discussed in Section \ref{sec:4}. We also postpone until then the statement of some endpoint ($p=\infty$ and $p=1$) Bernstein--Markov inequalities for the discrete gradient (see Proposition \ref{prop:gradp=infty} and Remark \ref{rem:gradp=1} respectively).

We finally turn to a problem studied by Filmus, Hatami, Keller and Lifshitz in \cite{FHKL16} (see also \cite{BB14}). Following their notation, for $p\in[1,\infty)$ and a function $f:\{-1,1\}^n\to\C$, denote by
\begin{equation}
\mathrm{Inf}^{(p)} f \eqdef \sum_{i=1}^n \|\partial_i f\|^p_{L_p(\{-1,1\}^n;\C)}.
\end{equation}
Motivated by a question of Aaronson and Ambrainis \cite{AA14}, the authors were interested in obtaining bounds of the form 
\begin{equation} \label{eq:influences}
\mathrm{Inf}^{(p)} f \leq f_p(d) \|f\|_{L_\infty(\{-1,1\}^n;\C)}^p,
\end{equation}
where $f_p(d)$ is a polynomial in $d$. In this direction, they proved \eqref{eq:influences} with $f_p(d) = d^{3-p}$ when $p\in[1,2]$ and $f_p(d)=d$ when $p\in[2,\infty)$, the latter of which is sharp. Using Theorem \ref{thm:smoothingR}, we deduce the following improved bounds for $p\in\big(1,\frac{4}{3}\big)$.

\begin{corollary} \label{cor:influences}
For every $p\in\big(1,\frac{4}{3}\big)$ there exists a constant $K_p\in(0,\infty)$ such that for every $n,d\in\N$ with $d\in\{1,\ldots,n\}$ and every function $f:\{-1,1\}^n\to\C$ of degree at most $d$, we have
\begin{equation} \label{eq:influencebound}
\mathrm{Inf}^{(p)} f \leq K_pd^{2-\frac{1}{\pi}\arcsin\big(\frac{2\sqrt{p-1}}{p}\big)} \|f\|_{L_\infty(\{-1,1\}^n;\C)}^p.
\end{equation}
\end{corollary}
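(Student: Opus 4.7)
The plan is to apply the reverse heat smoothing estimate \eqref{eq:lowersmoothingR} from Theorem~\ref{thm:smoothingR} to each of the partial derivatives $\partial_i f$, which are polynomials of degree at most $d-1$, and to combine it with a sharp bound on the $L_2$ influences of the smoothed function $g=e^{-t\Delta}f$. Concretely, for any parameter $t>0$, \eqref{eq:lowersmoothingR} yields the pointwise (in $i$) bound
\[
\|\partial_i f\|_{L_p(\{-1,1\}^n;\C)}^p \leq \left(\frac{(e^{t}+1)^{\pi/(2\pi-\theta_p)} + (e^{t}-1)^{\pi/(2\pi-\theta_p)}}{(e^{t}+1)^{\pi/(2\pi-\theta_p)} - (e^{t}-1)^{\pi/(2\pi-\theta_p)}}\right)^{p(d-1)} \|\partial_i e^{-t\Delta} f\|_{L_p(\{-1,1\}^n;\C)}^p,
\]
and summing over $i\in\{1,\ldots,n\}$ reduces the task to bounding $\mathrm{Inf}^{(p)}(g)$.

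The crucial observation is that, although $\|g\|_\infty \leq \|f\|_\infty$ by contractivity, the $L_2$ influences of $g$ enjoy a dimension-free decay. Since $\max_{k\geq 1} k e^{-2tk} \leq 1/(2te)$, one has
\[
\sum_{i=1}^{n} \|\partial_i g\|_2^2 = \sum_{A \subseteq \{1,\ldots,n\}} |A|\,e^{-2t|A|}|\widehat{f}(A)|^2 \leq \frac{1}{2te}\|f\|_\infty^2,
\]
which is strictly better than the generic bound $d\|f\|_\infty^2$ whenever $t \gg 1/d$. For $p\geq 2$ this would immediately combine with the pointwise inequality $|\partial_i g|^p \leq \|g\|_\infty^{p-2}(\partial_i g)^2$ to give $\mathrm{Inf}^{(p)}(g) \leq (2te)^{-1}\|f\|_\infty^p$; for $p\in(1,\tfrac{4}{3})$ I instead plan to combine the improved $L_2$ estimate with the Filmus--Hatami--Keller--Lifshitz interpolation $\mathrm{Inf}^{(p)}(g) \leq \mathrm{Inf}^{(1)}(g)^{2-p}\mathrm{Inf}^{(2)}(g)^{p-1}$, together with additional structure of the heat-smoothed $L_1$-influences.

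Substituting back and writing $a_p = \pi/(2\pi-\theta_p)$, the overall estimate will have the schematic form
\[
\mathrm{Inf}^{(p)} f \leq C_p \exp\!\big(C'_p\, d\, t^{a_p}\big)\cdot d^{\alpha(p)}\, t^{-\gamma(p)}\, \|f\|_\infty^p
\]
for explicit exponents $\alpha(p),\gamma(p)\geq 0$ arising from the interpolation. Choosing $t$ proportional to $d^{-1/a_p} = d^{-(2-\theta_p/\pi)}$ renders the exponential factor bounded and converts $t^{-\gamma(p)}$ into $d^{\gamma(p)(2-\theta_p/\pi)}$. A careful balance yielding $\alpha(p) + \gamma(p)(2-\theta_p/\pi) = 2-\theta_p/\pi$ will produce the claimed exponent $2-\theta_p/\pi$ on $d$.

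The main obstacle I anticipate is the sub-$2$ regime. Using the standard bound $\mathrm{Inf}^{(1)}(g) \leq d^2\|g\|_\infty$ inside the above interpolation produces the exponent $2-(p-1)\theta_p/\pi$, which unfortunately exceeds the desired $2-\theta_p/\pi$ whenever $p<2$, so the heart of the argument must refine the $L_1$-influence bound for $g$ itself by exploiting its nature as a heat-smoothed function rather than an arbitrary polynomial of degree $\leq d$. Implementing this refinement (and tracking the constants through the optimization) is the principal technical step of the proof, and it is what restricts the range of applicability of \eqref{eq:influencebound} to $p\in\big(1,\tfrac{4}{3}\big)$, namely the sub-interval of $(1,2)$ where the exponent $2-\theta_p/\pi$ actually improves upon the Filmus--Hatami--Keller--Lifshitz value $3-p$.
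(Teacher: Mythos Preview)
Your overall architecture matches the paper's: apply the reverse heat estimate \eqref{eq:lowersmoothingR} to each $\partial_i f$, reduce to bounding $\mathrm{Inf}^{(p)}(e^{-t\Delta}f)$, then optimize over $t\asymp d^{-1/\eta_p}$. However, you correctly flag---and then leave open---the decisive step: a refined bound on $\mathrm{Inf}^{(1)}(e^{-t\Delta}f)$ that beats the generic polynomial estimate. Without it your scheme only yields the exponent $2-(p-1)\theta_p/\pi$, which you yourself note is too weak, so as written the proposal has a genuine gap.

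The paper fills this gap with the bound
\[
\mathrm{Inf}^{(1)}(e^{-t\Delta}f)\ \le\ \frac{d}{\sqrt{e^{2t}-1}}\,\|f\|_{L_\infty(\{-1,1\}^n;\C)},
\]
proved via Sarantopoulos' vector-valued Bernstein inequality applied to the multilinear extension $F$ of $f$: since $\partial_i e^{-t\Delta}f(\e)=e^{-t}\partial_iF(e^{-t}\e)$, one has $\mathrm{Inf}^{(1)}(e^{-t\Delta}f)\le e^{-t}\max_{\|x\|_\infty\le e^{-t},\ \|y\|_\infty\le 1}\langle y,\nabla F(x)\rangle$, and Sarantopoulos bounds this directional derivative at an interior point by $\tfrac{d}{\sqrt{1-e^{-2t}}}\max_{\|x\|_\infty\le 1}|F(x)|=\tfrac{d}{\sqrt{1-e^{-2t}}}\|f\|_\infty$. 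With this in hand the paper does \emph{not} use your FHKL interpolation or the $L_2$ spectral bound; it uses the simpler pointwise inequality $|\partial_i g|^p\le |\partial_i g|\,\|g\|_\infty^{p-1}$ (valid because $|\partial_i g|\le\|g\|_\infty$), giving
\[
\mathrm{Inf}^{(p)}(e^{-t\Delta}f)\ \le\ \mathrm{Inf}^{(1)}(e^{-t\Delta}f)\,\|f\|_\infty^{p-1}\ \le\ \frac{d}{\sqrt{e^{2t}-1}}\,\|f\|_\infty^{p}.
\]
Plugging $t=d^{-1/\eta_p}$ into $e^{pC_p t^{\eta_p}d}\cdot d/\sqrt{2t}$ then produces the exponent $1+\tfrac{1}{2\eta_p}=2-\tfrac{\theta_p}{2\pi}$ directly. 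So the missing idea is precisely the Sarantopoulos-based $L_1$-influence estimate for the heat-smoothed function; once you have it, the route through $\mathrm{Inf}^{(2)}$ is unnecessary.
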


\subsubsection{A reverse Bernstein--Markov type inequality for $\nabla$} For the case of scalar valued functions we will also prove the following variant of Theorem \ref{thm:reversebernsteinKconvex} for the discrete gradient.

\begin{theorem} \label{thm:reversebernstein}
For every $p\in(1,\infty)$ there exists $c_p\in(0,\infty)$ such that for every $n,d,m\in\N$ with $d+m\leq n$ and every function $f:\{-1,1\}^n\to\C$ of degree at most $d+m$ which is also in the $d$-th tail space, we have
\begin{equation} \label{eq:reversegradient}
\|\nabla f\|_{L_p(\{-1,1\}^n;\C)} \geq c_p \sqrt{\frac{d}{m}} \|f\|_{L_p(\{-1,1\}^n;\C)}.
\end{equation}
\end{theorem}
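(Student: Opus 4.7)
The plan is to reduce the desired gradient lower bound to a lower bound on $\|\Delta^{1/2} f\|_{L_p}$ by invoking the classical Lust--Piquard Riesz transform inequality for scalar valued functions on the Hamming cube, and then to extract the square-root scaling from Theorem~\ref{thm:reversebernsteinKconvex} via the standard moment inequality for fractional powers of a positive operator with bounded $H^\infty$ functional calculus on $L_p$. Concretely, the Lust--Piquard inequality asserts that for every $p\in(1,\infty)$ there is $K_p\in(0,\infty)$ with
\[
\tfrac{1}{K_p}\|\Delta^{1/2} h\|_{L_p(\{-1,1\}^n;\C)} \leq \|\nabla h\|_{L_p(\{-1,1\}^n;\C)} \leq K_p \|\Delta^{1/2} h\|_{L_p(\{-1,1\}^n;\C)}
\]
for every mean-zero $h:\{-1,1\}^n\to\C$. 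Since any $f$ as in the theorem has mean zero (as $d\geq 1$), it suffices to prove $\|\Delta^{1/2} f\|_{L_p(\{-1,1\}^n;\C)} \geq c'_p\sqrt{d/m}\,\|f\|_{L_p(\{-1,1\}^n;\C)}$ for $f$ with spectrum in $I=\{d,\ldots,d+m\}$. Setting $g=\Delta^{1/2} f$ (which has the same spectrum as $f$), this is in turn equivalent to the operator inequality $\|\Delta^{-1/2}\|_{L_p\to L_p}\leq C_p\sqrt{m/d}$ on the subspace of $L_p(\{-1,1\}^n;\C)$ consisting of functions with spectrum in $I$.

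The second step is the heart of the argument. Applying Theorem~\ref{thm:reversebernsteinKconvex} with $X=\C$ (trivially $K$-convex) gives $\|\Delta f\|_{L_p}\geq c_p(d/m)\|f\|_{L_p}$ for $f$ with spectrum in $I$, equivalently $\|\Delta^{-1}\|_{L_p\to L_p}\leq m/(c_p d)$ on the same subspace. To pass from $\Delta^{-1}$ to $\Delta^{-1/2}$ one uses the moment inequality
\[
\|A^{-\theta} x\|_{L_p}\leq C_\theta\,\|A^{-1} x\|_{L_p}^{\theta}\,\|x\|_{L_p}^{1-\theta}, \qquad \theta\in[0,1],
\]
valid for any positive operator $A$ on $L_p$ with bounded $H^\infty$ functional calculus, a standard consequence of Stein's complex interpolation applied to the analytic family $z\mapsto A^{-z}$. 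Taking $A=\Delta$ and $\theta=1/2$, this yields $\|\Delta^{-1/2}\|_{L_p\to L_p}\leq C\sqrt{m/(c_p d)}$ on the subspace, which is the required operator bound.

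The main obstacle is the availability of the moment inequality in $L_p(\{-1,1\}^n;\C)$, which itself rests on the $L_p$-boundedness of the imaginary powers $\Delta^{it}$ with at most polynomial growth in $|t|$. For the Hamming cube Laplacian this is a classical consequence of the bounded $H^\infty$-functional calculus theory for generators of symmetric Markov semigroups on $L_p$. A more self-contained alternative, in line with the polynomial methods used elsewhere in the paper, would be to establish a half-power analogue of the Erd\'elyi-type polynomial inequality underlying the proof of Theorem~\ref{thm:reversebernsteinKconvex}, directly yielding the factor $\sqrt{d/m}$; this route appears plausible but technically more delicate than the functional-calculus approach.
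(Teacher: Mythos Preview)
Your proof is correct, and its first step (reducing to a lower bound on $\|\Delta^{1/2}f\|_{L_p}$ via Lust--Piquard) coincides exactly with the paper's. A small caveat: the two-sided Riesz transform inequality you quote fails in the upper direction for $p\in(1,2)$, but only the lower bound is needed here, and that does hold for all $p\in(1,\infty)$ (see \eqref{eq:lust2}).

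Where you diverge is in how you obtain the bound $\|\Delta^{1/2}f\|_{L_p}\ge c_p\sqrt{d/m}\,\|f\|_{L_p}$. The paper proves this directly as Theorem~\ref{thm:delta1/2}: Erd\'elyi's inequality \eqref{eq:erdelyi} already delivers the factor $\sqrt{d/m}$ for $\Delta^{1/2}$, and in fact Theorem~\ref{thm:reversebernsteinKconvex} is then \emph{derived} from Theorem~\ref{thm:delta1/2} by applying it twice. Your route goes the other way: take Theorem~\ref{thm:reversebernsteinKconvex} as input and interpolate back down to $\Delta^{1/2}$ via the moment inequality. This is logically fine (Theorem~\ref{thm:reversebernsteinKconvex} is already established), just slightly circuitous within the paper's architecture. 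Amusingly, the ``more self-contained alternative'' you sketch at the end---a half-power Erd\'elyi-type inequality giving $\sqrt{d/m}$ directly---is precisely the paper's primary route, and it is not more delicate but rather the simpler path.

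One simplification: the moment inequality you need, $\|\Delta^{-1/2}x\|_{L_p}\le C\|\Delta^{-1}x\|_{L_p}^{1/2}\|x\|_{L_p}^{1/2}$, follows immediately from the Naor--Schechtman inequality \eqref{eq:naos} already quoted in the paper (substitute $g=\Delta^{-1}x$, $\beta=\tfrac12$). There is no need to invoke $H^\infty$ functional calculus or boundedness of imaginary powers.
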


\subsection*{Acknowledgements} We are indebted to Assaf Naor for many helpful discussions. We are also very grateful to Tam\'as Erd\'elyi for proving the main result of \cite{Erd18} upon our request and to an anonymous referee for sharing with us an argument which improved Corollary \ref{cor:entropy}. Finally, we would like to thank Françoise Lust-Piquard for valuable feedback.


\section{Estimates for a general Banach space} \label{sec:2}

We first present the proof of Theorem \ref{thm:reverseheatgeneral}, the lower bound on the decay of the heat semigroup acting on functions with values in a general Banach space. Recall that the $d$-th Chebyshev polynomial of the first kind $T_d(x)$ is the unique polynomial of degree $d$ such that $T_d(\cos \theta) = \cos(d\theta)$ for every $\theta\in\R$. Chebyshev's inequality \cite[p.~235]{BE95} asserts that the $d$-th Chebyshev polynomial of the first kind is characterized by the extremal property
\begin{equation} \label{eq:chebyshevineq}
\forall \ x\in\R\setminus[-1,1], \ \ \ |T_d(x)| = \max\big\{ |p(x)|: \ \mathrm{deg}(p) \leq d \ \mbox{and} \ \|p\|_{\ms{C}([-1,1])}=1 \big\},
\end{equation}
where for a continuous function $h:K\to\C$ on a compact space $K$, we set $\|h\|_{\ms{C}(K)} = \max_{x\in K} |h(x)|$.

\medskip

\noindent {\it Proof of Theorem \ref{thm:reverseheatgeneral}.}
Fix $n\in\N$, $d\in\{1,\ldots,n\}$, $p\in[1,\infty]$ and let $f:\{-1,1\}^n\to X$ be a function of degree at most $d$. The contractivity of the heat semigroup which we derived from \eqref{eq:heatrepresent}, can be rewritten as $\|x^\Delta f\|_{L_p(\{-1,1\}^n;X)} \leq \|f\|_{L_p(\{-1,1\}^n;X)}$ for every $x\in[0,1]$. However, for $x\in\R$,
\begin{equation} \label{eq:symmetrysemigroup}
(-x)^\Delta f(\e) = \sum_{A\subseteq\{1,\ldots,n\}} (-x)^{|A|} \widehat{f}(A) w_A(\e) =  \sum_{A\subseteq\{1,\ldots,n\}} x^{|A|} \widehat{f}(A) w_A(-\e) = (x^\Delta f)(-\e),
\end{equation}
thus $\|(-x)^\Delta f\|_{L_p(\{-1,1\}^n;X)} = \|x^\Delta f\|_{L_p(\{-1,1\}^n;X)}$. Consequently,
\begin{equation} \label{eq:contract<0}
\forall \ x\in[-1,1], \ \ \ \|x^\Delta f\|_{L_p(\{-1,1\}^n;X)}\leq \|f\|_{L_p(\{-1,1\}^n;X)}.
\end{equation}
Let $\mu$ be any complex measure on $[-1,1]$. Then, averaging over \eqref{eq:contract<0}, we get
\begin{equation} \label{eq:usetriangleformeasure}
\begin{split}
\Big\| \int_{-1}^1 x^\Delta f \diff\mu(x)&\Big\|_{L_p(\{-1,1\}^n;X)} \leq \int_{-1}^1 \|x^\Delta f\|_{L_p(\{-1,1\}^n;X)} \diff|\mu|(x)
\\ & \stackrel{\eqref{eq:contract<0}}{\leq} \int_{-1}^1 \|f\|_{L_p(\{-1,1\}^n;X)} \diff|\mu|(x) = \|\mu\|_{\ms{M}([-1,1])} \|f\|_{L_p(\{-1,1\}^n;X)},
\end{split}
\end{equation}
where for a complex measure $\mu$ on a compact space $K$, we denote by $\|\mu\|_{\ms{M}(K)}$ the total variation of $\mu$. For $t\geq0$, consider the linear \mbox{functional $\varphi_t: \big( \mathrm{span}\{1,x,\ldots,x^d\}, \|\cdot\|_{\ms{C}([-1,1])}\big) \to \C$ given by}
\begin{equation}
\varphi_t\Big(\sum_{k=0}^d a_k x^k\Big) \eqdef \sum_{k=0}^d a_ke^{tk},
\end{equation}
or $\varphi_t(p) = p(e^t)$ when $p$ is a polynomial with $\mathrm{deg}(p)\leq d$. Then, Chebyshev's inequality \eqref{eq:chebyshevineq} can be rewritten as
\begin{equation} \label{cheche}
\forall \ p\in\mathrm{span}\{1,x,\ldots,x^d\}, \ \ \ |\varphi_t(p)| \leq T_d(e^t) \|p\|_{\ms{C}([-1,1])}.
\end{equation}
Therefore, by the Hahn--Banach theorem and the Riesz representation theorem, there exists a complex measure $\mu_t$ on $[-1,1]$ such that $\|\mu_t\|_{\ms{M}([-1,1])}\leq T_d(e^t)$ and for every polynomial $p$, we have
\begin{equation} \label{eq:constructedmeasure}
\mathrm{deg}(p)\leq d \ \ \ \Longrightarrow \ \ \ \int_{-1}^1 p(x)\diff\mu_t(x) = p(e^t).
\end{equation}
Since $\widehat{f}(A)=0$ when $|A|>d$, applying \eqref{eq:usetriangleformeasure} for the measure $\mu_t$ satisfying \eqref{eq:constructedmeasure}, we deduce that
\begin{equation}
\forall \ t\geq0, \ \ \ \|e^{t\Delta} f\|_{L_p(\{-1,1\}^n;X)} \stackrel{\eqref{eq:constructedmeasure}}{=} \Big\| \int_{-1}^1 x^\Delta f \diff\mu_t(x)\Big\|_{L_p(\{-1,1\}^n;X)} \stackrel{\eqref{cheche}}{\leq} T_d(e^t) \|f\|_{L_p(\{-1,1\}^n;X)},
\end{equation}
which is equivalent to the desired inequality \eqref{eq:reverseheatgeneral}.
\hfill$\Box$

\begin{remark} \label{rem:ole}
A weaker lower bound for the decay of the heat semigroup, attributed partially to Oleszkiewicz, was established in \cite[Lemma~5.4]{FHKL16} and asserts that for every function $f:\{-1,1\}^n \to X$ of degree at most $d$, we have
\begin{equation} \label{eq:ole}
\|e^{-t\Delta}f\|_{L_p(\{-1,1\}^n;X)} \geq e^{-td^2} \|f\|_{L_p(\{-1,1\}^n;X)}.
\end{equation}
Inequality \eqref{eq:ole} was stated in \cite{FHKL16} only for $X=\R$ and $p=1$, but the argument presented there works in greater generality. Checking that \eqref{eq:ole} is weaker than the estimate \eqref{eq:reverseheatgeneral} amounts to showing that for $y\geq1$, we have $T_d(y)\leq y^{d^2}$, which can be easily derived from the identity
\begin{equation} \label{eq:chebyshevidentity}
\forall \ |y|\geq1, \ \ \ T_d(y) = \frac{(y+\sqrt{y^2-1})^d+(y-\sqrt{y^2-1})^d}{2}.
\end{equation}
Furthermore, using \eqref{eq:chebyshevidentity}, it is elementary to check that
\begin{equation} \label{eq:chsqrt}
\forall \ t\geq0, \ \ \ T_d(e^t) \leq e^{3\max\{t,\sqrt{t}\}d},
\end{equation}
therefore inequality \eqref{eq:reverseheatKconvex} is also an improvement over \eqref{eq:reverseheatgeneral} for $t\approx0$.
\end{remark}

\smallskip

\noindent {\it Proof of Corollary \ref{cor:realchaos}.}
Fix $n\in\N$, $d\in\{1,\ldots,n\}$, $p>q>1$ and a function $f:\{-1,1\}^n\to X$ of degree at most $d$. Bonami's hypercontractive inequality \cite{Bon70} (the straightforward vector valued extension of which is due to Borell, see \cite{Bor79}) asserts that
\begin{equation} \label{eq:bonami}
\forall \ t\geq \frac{1}{2}\log\Big(\frac{p-1}{q-1}\Big), \ \ \ \|e^{-t\Delta} f\|_{L_p(\{-1,1\}^n;X)} \leq \|f\|_{L_q(\{-1,1\}^n;X)}.
\end{equation}
Combining \eqref{eq:reverseheatgeneral} and \eqref{eq:bonami}, we deduce that for $t\geq\frac{1}{2}\log\big(\frac{p-1}{q-1}\big)$,
\begin{equation} \label{eq:combinehyperandthm}
\|f\|_{L_p(\{-1,1\}^n;X)} \stackrel{\eqref{eq:reverseheatgeneral}}{\leq} T_d(e^t) \|e^{-t\Delta}f\|_{L_p(\{-1,1\}^n;X)} \stackrel{\eqref{eq:bonami}}{\leq} T_d(e^t) \|f\|_{L_q(\{-1,1\}^n;X)}.
\end{equation}
Plugging $t=\frac{1}{2}\log\big(\frac{p-1}{q-1}\big)$ in \eqref{eq:combinehyperandthm}, we deduce the moment comparison \eqref{eq:realchaos}.
\hfill$\Box$

\begin{remark} \label{rem:kwapien}
Inequality \eqref{eq:realchaos} for $d=1$ with some constant $C(p,q)$ originated in the work \cite{Kah64} of Kahane and the implicit constant was later improved to $\sqrt{\frac{p-1}{q-1}}$ by Kwapie\'{n} in \cite{Kwa76}. The use of hypercontractivity for moment comparison of vector valued Walsh polynomials was initiated by Borell in \cite{Bor79} (see the exposition \cite{Pis78}), who later showed in \cite{Bor84} that inequality \eqref{eq:realchaos} holds true with some constant depending only on $p,q$ and $d$ (this had previously been proven for scalar valued functions by Bourgain in \cite{Bou80} via a square function approach). To the extent of our knowledge, the best known dependence in \eqref{eq:realchaos} before the present work could be extracted from an argument in the monograph \cite[Proposition~6.5.1]{KW92} which goes as follows. For $k\in\{0,1,\ldots,n\}$ let $\msf{Rad}_k:L_p(\{-1,1\}^n;X)\to L_p(\{-1,1\}^n;X)$ be the Rademacher projection on level $k$ given by
\begin{equation}
\forall \ \e\in\{-1,1\}^n, \ \ \ \msf{Rad}_k(f)(\e) \eqdef \sum_{\substack{A\subseteq\{1,\ldots,n\}\\ |A|=k}} \widehat{f}(A) w_A(\e).
\end{equation}
If $d\in\{0,1,\ldots,n\}$, consider the linear functional $\sigma_k: \big( \mathrm{span}\{1,x,\ldots,x^d\}, \|\cdot\|_{\ms{C}([-1,1])}\big) \to \C$ given by $\sigma_k(p) = \frac{p^{(k)}(0)}{k!}$. Then, for every $k\in\{1,\ldots,d\}$, there exists some constant \mbox{$c(d,k)\in(0,\infty)$ such that}
\begin{equation}
\mathrm{deg}(p)\leq d \ \ \ \Longrightarrow \ \ \ |\sigma_k(p)| \leq c(d,k) \|p\|_{\ms{C}([-1,1])}.
\end{equation}
The optimal value of $c(d,k)$ is known, see \cite[p.~248]{BE95}, but clearly $c(d,k)\geq \frac{|T_d^{(k)}(0)|}{k!}$. Repeating the duality argument used in the proof of Theorem \ref{thm:reverseheatgeneral}, we deduce that for every function $f:\{-1,1\}^n\to X$ of degree at most $d$, $q\in[1,\infty]$ and $k\in\{0,\ldots,d\}$,
\begin{equation}\label{eq:radbounded}
\big\|\msf{Rad}_k f\big\|_{L_q(\{-1,1\}^n;X)} \leq c(d,k) \|f\|_{L_q(\{-1,1\}^n;X)}.
\end{equation}
Furthermore, by the moment comparison of homogeneous Walsh polynomials \cite{Bor79}, for $p>q>1$,
\begin{equation} \label{eq:borellhomogeneous}
\big\|\msf{Rad}_k f\big\|_{L_p(\{-1,1\}^n;X)} \leq \Big(\frac{p-1}{q-1}\Big)^{k/2} \big\|\msf{Rad}_k f\big\|_{L_q(\{-1,1\}^n;X)}.
\end{equation}
Therefore, combining \eqref{eq:radbounded} and \eqref{eq:borellhomogeneous}, we get
\begin{equation}
\begin{split}
\|f\|_{L_p(\{-1,1\}^n;X)} \leq \sum_{k=0}^d \big\| \msf{Rad}_k f\big\|_{L_p(\{-1,1\}^n;X)}  \stackrel{\eqref{eq:borellhomogeneous}}{\leq} &\sum_{k=0}^d  \Big(\frac{p-1}{q-1}\Big)^{k/2}  \big\|\msf{Rad}_k f\big\|_{L_q(\{-1,1\}^n;X)} \\ & \stackrel{\eqref{eq:radbounded}}{\leq} \sum_{k=0}^d c(d,k) \Big(\frac{p-1}{q-1}\Big)^{k/2}  \|f\|_{L_q(\{-1,1\}^n;X)}.
\end{split}
\end{equation}
However, the constant obtained by this argument is
\begin{equation}
\begin{split}
\sum_{k=0}^d c(d,k) \Big(\frac{p-1}{q-1}\Big)^{k/2} \geq \sum_{k=0}^d \frac{|T_d^{(k)}(0)|}{k!} \Big(\frac{p-1}{q-1}\Big)^{k/2} \geq  \sum_{k=0}^d \frac{T_d^{(k)}(0)}{k!} \Big(\frac{p-1}{q-1}\Big)^{k/2}  = T_d\Big(\sqrt{\frac{p-1}{q-1}}\Big),
\end{split}
\end{equation}
and therefore Corollary \ref{cor:realchaos} improves over the result of \cite{KW92}. 
\end{remark}

Combining Corollary \ref{cor:realchaos} and \eqref{eq:chsqrt}, it follows that for $p>q>1$ and every function $f:\{-1,1\}^n\to X$ of degree at most $d$, we have
\begin{equation}
\|f\|_{L_p(\{-1,1\}^n;X)} \leq e^{3d\max\big\{ \frac{1}{2}\log\big(\frac{p-1}{q-1}\big), \frac{1}{\sqrt{2}}\log \big(\frac{p-1}{q-1} \big)^{1/2} \big\}}\|f\|_{L_q(\{-1,1\}^n;X)}.
\end{equation}
In particular, if $p-1 \geq (1+\e)(q-1)$ for some $\e>0$, then there exists $C_\e=O(1/\sqrt{\e})>0$ such that every function $f:\{-1,1\}^n\to X$ of degree at most $d$ satisfies
\begin{equation} \label{eq:momentcompKconvex}
\|f\|_{L_p(\{-1,1\}^n;X)} \leq \Big( \frac{p-1}{q-1} \Big)^{C_\e d}\|f\|_{L_q(\{-1,1\}^n;X)},
\end{equation}
which should be understood as an extension of Borell's moment comparison \eqref{eq:borellhomogeneous} for vector valued functions of low degree instead of homogeneous. The validity of Theorem \ref{thm:reverseheatKconvex} with $\eta(p,X)=1$ would imply the estimate \eqref{eq:momentcompKconvex} with $C_\e$ replaced by a universal contant $C\in(0,\infty)$.

\begin{remark}
Some similar moment comparison estimates can be obtained for functions on the $\alpha$-biased hypercube using the biased hypercontractivity of Oleszkiewicz \cite{Ole03} and Wolff \cite{Wol07}. Indeed, it follows from the main results of \cite{Ole03, Wol07} and convexity considerations that for every $p>q>1$ and $\alpha\in(0,1)$, there exists a closed interval $I_{p,q}(\alpha) \subseteq[-1,1]$ with nonempty interior containing 0  such that for every $n\in\N$ and every coefficients $\{a_S\}_{S\subseteq\{1,\ldots,n\}}\subset X$,
\begin{equation}
\sup_{x\in I_{p,q}(\alpha)} \Big\| \sum_{S\subseteq\{1,\ldots,n\}} x^{|S|} a_S w_S \Big\|_{L_p(\{-\gamma^{-1},\gamma\}^n, \mu_\alpha^n ;X)} \leq \Big\| \sum_{S\subseteq\{1,\ldots,n\}} a_S w_S \Big\|_{L_q(\{-\gamma^{-1},\gamma\}^n, \mu_\alpha^n ;X)},
\end{equation}
where $\mu_\alpha = \alpha \delta_{-\gamma^{-1}} + (1-\alpha) \delta_\gamma$ and $\gamma = \sqrt{\tfrac{1}{\alpha}-1}$. Running the same duality argument as in the proof of Theorem \ref{thm:reverseheatgeneral} combined with an application of Chebyshev's inequality \eqref{eq:chebyshevineq} for a symmetric interval $J_{p,q}(\alpha) \subseteq I_{p,q}(\alpha)$, proves an analogue of \eqref{eq:realchaos} on $\big(\{-\gamma^{-1},\gamma\}^n, \mu_\alpha^n\big)$.
\end{remark}

Finally we state one ``endpoint" version\footnote{The bound \eqref{eq:entropy} of Corollary \ref{cor:entropy} was pointed out to us by an anonymous referee, who improved a suboptimal $O_q(d^2)$ estimate appearing in an earlier version of this manuscript. We are grateful to them for sharing their improvement with us and for their helpful comments.} of our moment comparison \eqref{eq:realchaos}. Recall that for a function $h:\{-1,1\}^n\to[0,\infty)$, we denote its entropy by
\begin{equation}
\mathrm{Ent}(h) \eqdef \frac{1}{2^n}\sum_{\e\in\{-1,1\}^n} h(\e)\log h(\e)- \Big(\frac{1}{2^n}\sum_{\e\in\{-1,1\}^n} h(\e)\Big) \cdot \log\Big(\frac{1}{2^n}\sum_{\e\in\{-1,1\}^n}h(\e)\Big).
\end{equation}

\begin{corollary} \label{cor:entropy}
Fix $n,d\in\N$ with $d\in\{1,\ldots,n\}$ and let $(X,\|\cdot\|_X)$ be a Banach space. For every $q\in(0,\infty)$ there exists a constant $C_q\in(0,\infty)$ such that every function $f:\{-1,1\}^n\to X$ of degree at most $d$ satisfies
\begin{equation} \label{eq:entropy}
\mathrm{Ent}( \|f\|_X^q) \leq C_qd \|f\|_{L_q(\{-1,1\}^n;X)}^q.
\end{equation}
\end{corollary}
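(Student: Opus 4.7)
The plan is to reduce \eqref{eq:entropy} to Corollary~\ref{cor:realchaos} via the standard Jensen-type entropy inequality
\begin{equation}\label{eq:plan-jensen}
\mathrm{Ent}(h)\leq \frac{r\,\mathbb{E}[h]}{r-1}\log\frac{\|h\|_{L_r}}{\|h\|_{L_1}},
\end{equation}
valid for every $r>1$ and every nonnegative $h$ on a probability space, which follows from concavity of $\log$ by integrating against the probability density $h/\mathbb{E}[h]$ (concretely, $\mathbb{E}[h\log(h/\mathbb{E}[h])^{r-1}]\leq \mathbb{E}[h]\log\mathbb{E}[(h/\mathbb{E}[h])^{r}]$). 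Applying \eqref{eq:plan-jensen} to $h=\|f\|_X^q$ on the Hamming cube reduces the desired estimate to exhibiting, for each $q\in(0,\infty)$, some $r=r_q>1$ such that $\log\bigl(\|f\|_{L_{qr}(\{-1,1\}^n;X)}/\|f\|_{L_q(\{-1,1\}^n;X)}\bigr)\leq C_q d$ for every function $f:\{-1,1\}^n\to X$ of degree at most $d$.

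For $q>1$ this is immediate: pick any $r_q>1$ with $qr_q$ strictly above $q$ (e.g.\ $r_q=1+1/q$), apply Corollary~\ref{cor:realchaos} to the pair $(qr_q,q)$, and use the elementary bound $\log T_d(e^t)\leq 3\max\{t,\sqrt{t}\}d$ from \eqref{eq:chsqrt} to convert the Chebyshev estimate into the desired linear-in-$d$ bound (with $q$-dependent constant).

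The main work lies in the sub-$L_1$ regime $q\in(0,1]$. Here I would take $r_q=2/q$, so that $qr_q=2$, and bound $\|f\|_{L_2}/\|f\|_{L_q}$ in two steps. First, a Kahane-type estimate $\|f\|_{L_2}\leq A^d\|f\|_{L_1}$ with a universal constant $A$, obtained by combining Corollary~\ref{cor:realchaos} for the pair $(2,\tfrac{3}{2})$ (controlled via \eqref{eq:chsqrt}) with the log-convexity bound $\|f\|_{L_{3/2}}\leq\|f\|_{L_1}^{1/3}\|f\|_{L_2}^{2/3}$, after a one-line rearrangement. Second, log-convexity of $L_p$-norms on $q<1<2$ gives $\|f\|_{L_1}\leq\|f\|_{L_q}^{1-\theta}\|f\|_{L_2}^\theta$ with $\theta=2(1-q)/(2-q)$, and substituting the Kahane bound and rearranging yields $\|f\|_{L_2}\leq A^{d(2-q)/q}\|f\|_{L_q}$, as needed.

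The key technical obstacle is this sub-$L_1$ case, where Corollary~\ref{cor:realchaos} is formally inapplicable and one must bootstrap to a source exponent $>1$ via log-convexity. The linearity in $d$ of \eqref{eq:entropy} (as opposed to the $O_q(d^2)$ bound mentioned in the footnote) comes precisely from the fact that \eqref{eq:chsqrt} produces $\log T_d(e^t)=O(d)$ for bounded $t$, which is what converts the Chebyshev polynomial moment comparison into the desired linear entropy bound.
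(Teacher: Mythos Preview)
Your proposal is correct, and the core mechanism---bounding the entropy by a ratio of $L_p$-norms via log-convexity of moments, then invoking Corollary~\ref{cor:realchaos} together with the linear-in-$d$ estimate \eqref{eq:chsqrt}---is exactly what the paper does. The implementations differ in two respects, though.

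First, the paper uses the \emph{left} secant inequality for the convex function $\upphi(r)=\log\|f\|_{L_{1/r}}$, obtaining
\[
\frac{\mathrm{Ent}(\|f\|_X^q)}{\|f\|_{L_q}^q}\leq\frac{pq}{q-p}\log\frac{\|f\|_{L_q}}{\|f\|_{L_p}}\qquad(p<q),
\]
whereas your Jensen bound \eqref{eq:plan-jensen} is the \emph{right} secant inequality (comparing with $qr>q$). Both are equivalent manifestations of the same convexity.

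Second, and this is the more substantive point, the paper avoids your case split for $q\leq1$ entirely. Since $\upphi$ is convex, the quantity $q\mapsto\mathrm{Ent}(\|f\|_X^q)/\|f\|_{L_q}^q=-\upphi'(1/q)$ is nondecreasing in $q$; hence the bound for $q=4$ (with constant $C_4$) automatically covers all $q<4$, and one only needs to argue directly for $q\geq4$, where $p=q/2>1$ and Corollary~\ref{cor:realchaos} applies without any bootstrapping. This monotonicity shortcut replaces your entire sub-$L_1$ argument (the $L_2$--$L_1$ Kahane step plus the log-convexity extrapolation down to $L_q$). Your route is perfectly valid---and in fact the $L_2$--$L_1$ step you describe is essentially the trick behind \cite[Theorem~9.22]{O'D14} and Remark~\ref{rem:momentboost}---but the paper's reduction is shorter and also makes the dependence $C_q=O(q)$ as $q\to\infty$ transparent.
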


\begin{proof}
By H\"older's inequality, for every $\alpha,\beta\in(0,\infty)$ and $\lambda\in(0,1)$,
\begin{equation} \label{eq:fancyholder}
\|f\|_{L_{(\frac{\lambda}{\alpha}+\frac{1-\lambda}{\beta})^{-1}}(\{-1,1\}^n;X)} \leq \|f\|_{L_\alpha(\{-1,1\}^n;X)}^\lambda \|f\|_{L_\beta(\{-1,1\}^n;X)}^{1-\lambda}.
\end{equation}
In other words, the function $\upphi:(0,\infty)\to\R$ given by
\begin{equation}
\forall \ r\in(0,\infty), \ \ \ \ \upphi(r)\eqdef \log\big(\|f\|_{L_{1/r}(\{-1,1\}^n;X)}\big)
\end{equation}
is convex. Hence, the function $(0,\infty)\ni r\longmapsto \upphi'(1/r)$ is nonincreasing, which implies that
\begin{equation} \label{eq:upphi}
\forall \ p<q, \ \ \ \upphi'\big(\tfrac{1}{q}\big) \cdot \big( \tfrac{1}{p}-\tfrac{1}{q}\big) \geq \upphi\big(\tfrac{1}{p}\big) - \upphi\big(\tfrac{1}{q}\big).
\end{equation}
Writing
\begin{equation}
\|f\|_{L_\alpha(\{-1,1\}^n;X)} = \exp\Big( \frac{1}{\alpha}\log\Big( \frac{1}{2^n}\sum_{\e\in\{-1,1\}^n} \|f(\e)\|_X^\alpha \Big)\Big),
\end{equation}
one deduces that
\begin{equation}
\frac{\diff}{\diff \alpha} \|f\|_{L_\alpha(\{-1,1\}^n;X)} = \|f\|_{L_\alpha(\{-1,1\}^n;X)} \cdot \frac{\mathrm{Ent}\|f\|_X^\alpha}{\alpha^2\|f\|_{L_\alpha(\{-1,1\}^n;X)}^\alpha},
\end{equation}
which implies that
\begin{equation} \label{eq:compderiva}
\upphi'\Big(\frac{1}{q}\Big) = - \frac{\mathrm{Ent}\|f\|_X^q}{\|f\|_{L_q(\{-1,1\}^n;X)}^q}.
\end{equation}
By the monotonicity of $r\mapsto\upphi'(1/r)$ and \eqref{eq:compderiva}, in order to prove \eqref{eq:entropy} for every $q\in(0,\infty)$ it suffices to consider the case $q\geq4$. By \eqref{eq:compderiva}, \eqref{eq:upphi} can be rewritten as
\begin{equation}
\begin{split}
\frac{\mathrm{Ent}\|f\|_X^q}{\|f\|_{L_q(\{-1,1\}^n;X)}^q} & \leq \frac{pq}{q-p} \log\left(\frac{\|f\|_{L_q(\{-1,1\}^n;X)}}{\|f\|_{L_p(\{-1,1\}^n;X)}}\right) 
\\ & \stackrel{\eqref{eq:realchaos}}{\leq} \frac{pq}{q-p} \log\Big(T_d\Big(\sqrt{\frac{q-1}{p-1}}\Big)\Big) \leq \frac{pqd}{2(q-p)} \log \frac{4(q-1)}{p-1},
\end{split}
\end{equation}
where in the last inequality we used the estimate $T_d(y)\leq (2y)^d$ for $y>1$ which follows from \eqref{eq:chebyshevidentity}. Choosing $p=q/2$ completes the proof with $C_q=O(q)$ as $q\to\infty$.
\end{proof}

\begin{remark} \label{rem:momentboost}
The application \eqref{eq:fancyholder} of H\"older's inequality can be rewritten as
\begin{equation} \label{eq:holdertrick}
\forall \ 0<q<p<r, \ \ \ \log\left( \frac{\|f\|_{L_p(\{-1,1\}^n;X)}}{\|f\|_{L_q(\{-1,1\}^n;X)}}\right) \leq \frac{(p-q)r}{(r-q)p} \cdot \log\left( \frac{\|f\|_{L_r(\{-1,1\}^n;X)}}{\|f\|_{L_q(\{-1,1\}^n;X)}}\right).
\end{equation}
It is classical that using \eqref{eq:holdertrick}, one can improve moment comparison bounds for low degree functions (see, e.g., \cite{IT18} for a recent application of this idea). The same applies to the moment comparison inequalities obtained in this paper. For instance, combining \eqref{eq:realchaos} with \eqref{eq:holdertrick}, we deduce that for every function $f:\{-1,1\}^n\to X$ of degree at most $d$ and every $p>q>1$,
\begin{equation} \label{eq:imporvedchaos}
\|f\|_{L_p(\{-1,1\}^n;X)} \leq \inf_{r>p} T_d\Big(\sqrt{\frac{r-1}{q-1}}\Big)^{\frac{(p-q)r}{(r-q)p}} \cdot \|f\|_{L_q(\{-1,1\}^n;X)}.
\end{equation}
Moreover it is well known that the same trick relying on \eqref{eq:holdertrick} can be used to prove moment comparison estimates beyond the range $p>q>1$ (see, e.g.,  \cite[Theorem~9.22]{O'D14} or Theorem \ref{thm:momentcomp1} below). Since all such applications of \eqref{eq:holdertrick} are automatic, in the sequel we will omit stating improvements such as \eqref{eq:imporvedchaos} (for instance, of Corollary \ref{cor:momentcompR} or Corollary \ref{cor:momentcompKconvex}).
\end{remark}

We proceed by proving Pisier's inequality \eqref{eq:Pisierlowfreq} for functions of low degree. The proof is an almost mechanical adaptation of Pisier's argument from \cite{Pis86} (see also the exposition in \cite{Nao12}) with the exception of suitably using the lower bound \eqref{eq:reverseheatgeneral} instead of the trivial lower bound 
\begin{equation}
\forall \ t\geq0, \ \ \ \|e^{-t\Delta}f\|_{L_p(\{-1,1\}^n;X)} \geq e^{-nt}\|f\|_{L_p(\{-1,1\}^n;X)},
\end{equation} 
which holds true for every function $f:\{-1,1\}^n\to X$. For completeness, we\mbox{ present the full proof.}

\medskip

\noindent {\it Proof of Theorem \ref{thm:Pisierlowfreq}.} Fix $p\in[1,\infty)$ and let $f:\{-1,1\}^n\to X$ be such that $\sum_{\delta\in\{-1.1\}^n} f(\delta)=0$. Fix $s\geq0$ and consider a normalizing functional $g_s^\ast$ of $e^{-s\Delta}f$ in $L_p(\{-1,1\}^n;X)$, that is, a function $g_s^\ast:\{-1,1\}^n \to X^\ast$ such that $\|g_s^\ast\|_{L_q(\{-1,1\}^n;X^\ast)}=1$, where $\frac{1}{p}+\frac{1}{q}=1$, and
\begin{equation} \label{eq:writenormfun}
\|e^{-s\Delta}f\|_{L_p(\{-1,1\}^n;X)} = \frac{1}{2^n} \sum_{\e\in\{-1,1\}^n} \langle g_s^\ast(\e), e^{-s\Delta}f(\e)\rangle = \frac{1}{2^n} \sum_{\e\in\{-1,1\}^n} \langle e^{-s\Delta} g_s^\ast(\e), f(\e)\rangle,
\end{equation}
where in the last equality we used the fact that $e^{-s\Delta}$ is self-adjoint. However, for $\e\in\{-1,1\}^n$,
\begin{equation} \label{eq:usenormfun}
\begin{split}
\langle e^{-s\Delta} g_s^\ast(\e), f(\e)\rangle & = \frac{1}{2^n} \sum_{\delta\in\{-1,1\}^n} \langle e^{-s\Delta} \Delta^{-1} \Big( \sum_{i=1}^n \delta_i \partial_i\Big)g_s^\ast(\e) ,\sum_{i=1}^n \delta_i \partial_i f(\e) \rangle
\\ & = \frac{1}{2^n} \sum_{\delta\in\{-1,1\}^n} \int_s^\infty \langle  \sum_{i=1}^n \delta_i \partial_i e^{-t\Delta} g_s^\ast(\e) ,\sum_{i=1}^n \delta_i \partial_i f(\e) \rangle \diff t.
\end{split}
\end{equation}
For $s,t\geq0$, consider the fucntion $g_{s,t}^\ast: \{-1,1\}^n\times \{-1,1\}^n \to X^\ast$ given by
\begin{equation} \label{eq:takeradproj}
\begin{split}
g_{s,t}^\ast(\e,\delta) & \eqdef \sum_{A\subseteq\{1,\ldots,n\}} \widehat{g_s^\ast}(A) \prod_{i\in A} \big( e^{-t} \e_i + (1-e^{-t}) \delta_i\big)
\\ & = e^{-t\Delta} g_s^\ast(\e) + (e^t-1) \sum_{i=1}^n \e_i\delta_i\partial_i e^{-t\Delta}g_s^\ast(\e) + \Phi_{s,t}^\ast(\e,\delta),
\end{split}
\end{equation}
where $\sum_{\delta\in\{-1,1\}^n} \delta_i \Phi_{s,t}^\ast(\e,\delta)=0$ for every $\e\in\{-1,1\}^n$ and $i\in\{1,\ldots,n\}$. Furthermore, for every $s,t\geq0$, expanding \eqref{eq:takeradproj} in the Walsh basis, we see that
\begin{equation} \label{eq:twovarcontract1}
g_{s,t}^\ast(\e,\delta) = \sum_{B\subseteq\{1,\ldots,n\}} e^{-t|B|} (1-e^{-t})^{n-|B|} g_s^\ast\Big(\sum_{i\in B} \e_i e_i +\sum_{i\in\{1,\ldots,n\}\setminus B} \delta_i e_i\Big),
\end{equation}
where $\{e_i\}_{i=1}^n$ is the standard basis of $\R^n$. Therefore, for every $s, t\geq0$, we have
\begin{equation} \label{eq:twovarcontract2}
\|g_{s,t}^\ast\|_{L_q(\{-1,1\}^n\times\{-1,1\}^n;X^\ast)} \leq \sum_{k=0}^n \binom{n}{k} e^{-tk}(1-e^{-t})^{n-k} \|g_s^\ast\|_{L_q(\{-1,1\}^n;X^\ast)} =1.
\end{equation}
Combining \eqref{eq:writenormfun}, \eqref{eq:usenormfun}, \eqref{eq:takeradproj} and \eqref{eq:twovarcontract2} with H\"older's inequality, we deduce that
\begin{equation} \label{eq:loong}
\begin{split}
\|e^{-s\Delta}f\|_{L_p(\{-1,1\}^n;X)} & \stackrel{\eqref{eq:usenormfun}\wedge\eqref{eq:takeradproj}}{=} \frac{1}{4^n} \sum_{\e,\delta\in\{-1,1\}^n} \int_s^\infty \frac{1}{e^t-1} \langle g_{s,t}^\ast(\e,\delta), \sum_{i=1}^n \delta_i \partial_i f(\e)\rangle \diff t
\\ & \stackrel{\eqref{eq:twovarcontract2}}{\leq} \Big( \int_s^\infty \frac{1}{e^t-1} \diff t\Big) \Big( \frac{1}{2^n}\sum_{\delta\in\{-1,1\}^n} \Big\| \sum_{i=1}^n \delta_i \partial_i f\Big\|^p_{L_p(\{-1,1\}^n;X)}\Big)^{1/p}
\\ & = \log\Big( \frac{e^s}{e^s-1}\Big) \Big( \frac{1}{2^n}\sum_{\delta\in\{-1,1\}^n} \Big\| \sum_{i=1}^n \delta_i \partial_i f\Big\|^p_{L_p(\{-1,1\}^n;X)}\Big)^{1/p}.
\end{split}
\end{equation}
Furthermore, since $f$ has degree at most $d$, Theorem \ref{thm:reverseheatgeneral} implies that for $s\geq0$,
\begin{equation} \label{eq:usereverse}
\frac{1}{T_d(e^s)} \|f\|_{L_p(\{-1,1\}^n;X)} \leq \|e^{-s\Delta}f\|_{L_p(\{-1,1\}^n;X)}.
\end{equation}
Combining \eqref{eq:loong} and \eqref{eq:usereverse}, we conclude that for every $s\geq0$,
\begin{equation} \label{eq:foreverys}
\|f\|_{L_p(\{-1,1\}^n;X)} \leq T_d(e^s)\log\Big( \frac{e^s}{e^s-1}\Big) \Big( \frac{1}{2^n}\sum_{\delta\in\{-1,1\}^n} \Big\| \sum_{i=1}^n \delta_i \partial_i f\Big\|^p_{L_p(\{-1,1\}^n;X)}\Big)^{1/p}.
\end{equation}
As in Remark \ref{rem:ole}, using that $T_d(e^s)\leq e^{d^2s}$ for $s\geq0$ and a straightforward optimization we get
\begin{equation} \label{eq:minins}
\min_{s\geq0} T_d(e^s)\log\Big(\frac{e^s}{e^s-1}\Big) \leq \min_{s\geq0} e^{d^2s} \log\Big(\frac{e^s}{e^s-1}\Big) \leq 3(\log d+1).
\end{equation}
Plugging \eqref{eq:minins} in \eqref{eq:foreverys}, we finally deduce Pisier's inequality \eqref{eq:Pisierlowfreq}.
\hfill$\Box$

\medskip

We now prove the Bernstein--Markov type inequality \eqref{eq:laplaciangeneral} for the hypercube Laplacian. In the proof, we will need Markov's inequality (see \cite[Theorem~5.1.8]{BE95}) which asserts that
\begin{equation} \label{eq:markov}
\max \{ |p'(1)|: \ \mathrm{deg}(p)\leq d \ \mbox{and} \ \|p\|_{\ms{C}([-1,1])}=1\big\} = d^2,
\end{equation}
where the equality is achieved for the $d$-th Chebyshev polynomial of the first kind $T_d(x)$. Our proof is similar to the one presented in \cite{FHKL16}, \mbox{which also crucially relied on Markov's inequality \eqref{eq:markov}.}

\medskip

\noindent {\it Proof of Theorem \ref{thm:laplaciangeneral}.}
Fix $n\in\N$, $d\in\{1,\ldots,n\}$, $p\in[1,\infty]$ and let $f:\{-1,1\}^n\to X$ be a function of degree at most $d$. Consider the linear functional \mbox{$\psi:\big(\mathrm{span}\{1,x,\ldots,x^d\}, \|\cdot\|_{\ms{C}([-1,1])}\big) \to\C$ given by}
\begin{equation}
\psi\Big( \sum_{k=0}^d a_k x^k\Big) \eqdef \sum_{k=0}^d ka_k,
\end{equation}
or $\psi(p)=p'(1)$ when $p$ is a polynomial with $\mathrm{deg}(p)\leq d$. Then, Markov's inequality \eqref{eq:markov} can be rewritten as
\begin{equation} \label{ineq6}
\forall \ p\in\mathrm{span}\{1,x,\ldots,x^d\}, \ \ \ |\psi(p)| \leq d^2 \|p\|_{\ms{C}([-1,1])}.
\end{equation}
Therefore, by the Hahn--Banach theorem and the Riesz representation theorem, there exists a complex measure $\nu$ on $[-1,1]$ such that $\|\nu\|_{\ms{M}([-1,1])}\leq d^2$ and for every polynomial $p$, we have
\begin{equation} \label{ide6}
\mathrm{deg}(p)\leq d \ \ \ \Longrightarrow \ \ \ \int_{-1}^1 p(x)\diff\nu(x) = p'(1).
\end{equation}
Since $\widehat{f}(A)=0$ when $|A|>d$, we get that
\begin{equation}
\begin{split}
\|\Delta f&\|_{L_p(\{-1,1\}^n;X)}  \stackrel{\eqref{ide6}}{=} \Big\| \int_{-1}^1 x^\Delta f \diff\nu(x)\Big\|_{L_p(\{-1,1\}^n;X)} \leq \int_{-1}^1 \|x^\Delta f\|_{L_p(\{-1,1\}^n;X)} \diff|\nu|(x) 
\\ & \stackrel{\eqref{eq:contract<0}}{\leq} \int_{-1}^1 \|f\|_{L_p(\{-1,1\}^n;X)} \diff|\nu|(x) = \|\nu\|_{\ms{M}([-1,1])} \|f\|_{L_p(\{-1,1\}^n;X)} \stackrel{\eqref{ineq6}}{\leq} d^2 \|f\|_{L_p(\{-1,1\}^n;X)},
\end{split}
\end{equation}
which concludes the proof of the theorem.
\hfill$\Box$

\medskip

\begin{remark} \label{rem:timederivative}
One can generalize Theorem \ref{thm:laplaciangeneral} by showing that for every $k\in\N$, we have
\begin{equation} \label{eq:timederivative}
\|\Delta(\Delta-1)\cdots(\Delta-k+1) f\|_{L_p(\{-1,1\}^n;X)} \leq \frac{d^2\big(d^2-1^2\big)\cdots\big(d^2-(k-1)^2\big)}{1\cdot 3\cdots (2k-1)} \|f\|_{L_p(\{-1,1\}^n;X)}.
\end{equation}
Theorem \ref{thm:reverseheatgeneral} corresponds to the case $k=1$. To derive \eqref{eq:timederivative}, one can use the same duality argument used in the proofs of Theorems \ref{thm:reverseheatgeneral} and \ref{thm:laplaciangeneral} for the linear functional $\psi_k(p) = p^{(k)}(1)$ along with Markov's inequality for higher derivatives (see \cite[Theorem~5.2.1]{BE95}), which asserts that
\begin{equation} \label{eq:vamarkov}
\max \{ |p^{(k)}(1)|: \ \mathrm{deg}(p)\leq d \ \mbox{and} \ \|p\|_{\ms{C}([-1,1])}=1\big\} = T_d^{(k)}(1) = \frac{d^2\big(d^2-1^2\big)\cdots\big(d^2-(k-1)^2\big)}{1\cdot 3\cdots (2k-1)}.
\end{equation}
\end{remark}

We conclude this section by proving Theorem \ref{thm:cotype}.

\medskip

\noindent {\it Proof of Theorem \ref{thm:cotype}.} Assume that $(X,\|\cdot\|_X)$ does not have finite cotype. By the Maurey--Pisier theorem \cite{MP76}, for every $\theta>0$ and $m\in\N$ there exists a linear operator $\msf{S}_m:\ell_\infty^m\to X$ such that
\begin{equation} \label{eq:usemp}
\forall \ y\in\ell_\infty^m, \ \ \ \|y\|_\infty \leq \|\msf{S}_my\|_X \leq (1+\theta)\|y\|_\infty.
\end{equation}
Let $n\in\N$ with $d\in\{1,\ldots,n\}$. Define the function $f:\{-1,1\}^n \to L_\infty(\{-1,1\}^n;\R)$ by
\begin{equation}
\big[ f(\e) \big] (\delta) \eqdef T_d\Big(\frac{\e_1\delta_1+\cdots\e_n\delta_n}{n}\Big),
\end{equation}
where $T_d(x)$ is the $d$-th Chebyshev polynomial of the first kind and consider the composition $F: \{-1,1\}^n\to X$ given by $F=\msf{S}_{2^n}\circ f$, where $L_\infty(\{-1,1\}^n;\R)$ is naturally identified with $\ell_\infty^{2^n}$. Since $T_d(x)$ is a polynomial of degree $d$, the functions $f$ and $F$ are also of degree at most $d$ and
\begin{equation}
\|F(\e)\|_X \stackrel{\eqref{eq:usemp}}{\leq}(1+\theta) \|f(\e)\|_{L_\infty(\{-1,1\}^n;\R)} = (1+\theta) \max_{\delta\in\{-1,1\}^n} T_d\Big(\frac{\e_1\delta_1+\cdots\e_n\delta_n}{n}\Big) = 1+\theta,
\end{equation}
for every $\e\in\{-1,1\}^n$, because $\|T_d\|_{\ms{C}([-1,1])} = T_d(1)=1$. Therefore, we also have 
\begin{equation} \label{eq:Fformp}
\|F\|_{L_p(\{-1,1\}^n;X)} = \Big( \frac{1}{2^n} \sum_{\e\in\{-1,1\}^n} \|F(\e)\|_X^p\Big)^{1/p} \leq 1+\theta.
\end{equation}
Furthermore, for every $\e,\delta\in\{-1,1\}^n$,
\begin{equation}
\big[\partial_i f(\e)\big](\delta) = \frac{1}{2}\Big(T_d\Big(\frac{\e_1\delta_1+\cdots+ \e_n\delta_n}{n}\Big) - T_d\Big(\frac{\e_1\delta_1+\cdots+\e_n\delta_n}{n} - \frac{2\e_i\delta_i}{n}\Big)\Big),
\end{equation}
which implies the identity
\begin{equation}
\big[\Delta f(\e)\big](\delta) = \frac{1}{2}\Big(n T_d\Big(\frac{\e_1\delta_1+\cdots+\e_n\delta_n}{n}\Big) - \sum_{i=1}^n T_d\Big(\frac{\e_1\delta_1+\cdots+\e_n\delta_n}{n} - \frac{2\e_i\delta_i}{n}\Big)\Big).
\end{equation}
Therefore, taking $\e=\delta$, we deduce that
\begin{equation} \label{eq:cosinecompute}
\begin{split}
\|\Delta f(\e)\|_{L_\infty(\{-1,1\}^n;\R)}& \geq \frac{n}{2} \Big( T_d(1) - T_d\Big(1-\frac{2}{n}\Big)\Big) = \frac{n}{2} \Big( 1 - T_d\Big(1-\frac{2}{n}\Big)\Big),
\end{split}
\end{equation}
from which we derive the estimate
\begin{equation} \label{eq:deltaformp}
\|\Delta F\|_{L_p(\{-1,1\}^n;X)} \stackrel{\eqref{eq:usemp}}{\geq} \|\Delta f\|_{L_p(\{-1,1\}^n;L_\infty(\{-1,1\}^n;\R))} \stackrel{\eqref{eq:cosinecompute}}{\geq}  \frac{n}{2}\Big( 1-T_d\Big(1-\frac{2}{n}\Big)\Big).
\end{equation}
Applying \eqref{eq:cotypeassumption} for the function $F:\{-1,1\}^n\to X$ and using \eqref{eq:Fformp} and \eqref{eq:deltaformp}, we finally get
\begin{equation}
\begin{split}
\frac{n}{2} \Big( 1 - T_d\Big(1-\frac{2}{n}\Big)\Big) &
\stackrel{\eqref{eq:deltaformp}}{\leq} \|\Delta F\|_{L_p(\{-1,1\}^n;X)} \\ & \stackrel{\eqref{eq:cotypeassumption}}{\leq} (1-\eta) d^2 \|F\|_{L_p(\{-1,1\}^n;X)}  \stackrel{\eqref{eq:Fformp}}{\leq} (1-\eta)(1+\theta)d^2.
\end{split}
\end{equation}
Letting $n\to\infty$, we get
\begin{equation} \label{eq:almostcontr}
d^2=T_d'(1) \leq (1-\eta)(1+\theta)d^2.
\end{equation}
Finally, letting $\theta\to0^+$, \eqref{eq:almostcontr} becomes $d^2\leq (1-\eta)d^2$ which is a contradiction.
\hfill$\Box$

\begin{remark} \label{rem:sharpness}
Considering the function $f:\{-1,1\}^n\to\C$ given by $f(\e) = T_d\big(\frac{\e_1+\cdots+\e_n}{n}\big)$ for $n\to\infty$ shows that Theorem \ref{thm:laplaciangeneral} is also sharp when $X=\C$ and $p=\infty$.
\end{remark}


\section{Estimates for $K$-convex Banach spaces} \label{sec:3}

The main vector valued Fourier analytic tool which we will exploit in this section is the following fact which lies at the heart of the proof of Pisier's $K$-convexity theorem \cite{Pis82}.

\begin{theorem}  [Pisier] \label{thm:pisKconv}
A Banach space $(X,\|\cdot\|_X)$ is $K$-convex if and only if for every $p\in(1,\infty)$ there exist $\theta=\theta(p,X)\in\big(0,\frac{\pi}{2}\big]$ and $M=M(p,X)\in[1,\infty)$ such that for every $n\in\N$
\begin{equation} \label{eq:conclusionKconv}
|\arg z|\leq \theta \ \ \ \Longrightarrow \ \ \ \|e^{-z\Delta}\|_{{L_p(\{-1,1\}^n;X)}\to {L_p(\{-1,1\}^n;X)}} \leq M,
\end{equation}
where $\|T\|_{{L_p(\{-1,1\}^n;X)}\to {L_p(\{-1,1\}^n;X)}}$ is the operator norm of\mbox{ $T$ from ${L_p(\{-1,1\}^n;X)}$ to itself.}
\end{theorem}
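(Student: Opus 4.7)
The plan is to establish the two implications of Pisier's equivalence; I sketch the easier direction in detail and rely on Pisier's original argument for the deep direction.

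For the direction $(\Leftarrow)$, the idea is to extract the Rademacher projection from the analytic semigroup via a contour integral. Since the spectrum of $\Delta$ on $L_p(\{-1,1\}^n;X)$ is contained in $\{0,1,\ldots,n\}$, the Walsh decomposition $e^{-z\Delta}=\sum_{k=0}^{n}e^{-zk}P_k$ (with $P_k$ denoting the level-$k$ projection) combined with Fourier orthogonality $\frac{1}{2\pi}\int_0^{2\pi} e^{i(1-k)\theta}\,d\theta=\delta_{k,1}$ yields the identity
\begin{equation*}
\msf{Rad} \;=\; P_1 \;=\; \frac{e^r}{2\pi}\int_0^{2\pi} e^{i\theta}\, e^{-(r+i\theta)\Delta}\, d\theta, \qquad \text{for every } r>0.
\end{equation*}
I would then choose $r$ large enough so that $\arctan(2\pi/r)\leq\theta(p,X)$, which keeps the straight contour $\{r+i\theta:\theta\in[0,2\pi]\}$ inside the sector, and the hypothesis \eqref{eq:conclusionKconv} immediately gives the uniform bound $\sup_n\|\msf{Rad}\|_{L_p(\{-1,1\}^n;X)\to L_p(\{-1,1\}^n;X)} \leq e^rM$, which is exactly $K$-convexity.

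For the direction $(\Rightarrow)$, by multiplicativity of the semigroup and the contractivity $\|e^{-t\Delta}\|\leq 1$ on $[0,\infty)$, it would suffice to bound $\|e^{-(1+i\theta)\Delta}\|_{L_p\to L_p}$ uniformly in $n$ for $|\theta|<\theta_0(p,X)$. Writing $e^{-(1+i\theta)\Delta}f=e^{-\Delta}\sum_{k=0}^{n} e^{-ik\theta}P_kf$ reduces the question to a vector valued Marcinkiewicz-type multiplier theorem for the Walsh-Rademacher levels, where one must uniformly control the multiplier $k\mapsto e^{-ik\theta}$ applied after the smoothing $e^{-\Delta}$, using only the first-level projection bound supplied by $K$-convexity.

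The main obstacle is precisely this upgrade from the boundedness of $P_1$ to simultaneous control of all the level projections (or of their weighted combinations). Pisier's resolution in \cite{Pis82}, which I would invoke rather than reproduce, proceeds via a transfer between the discrete cube and Gauss space (central-limit and Mehler-kernel arguments), a vector valued Stein-type martingale inequality, and precise estimates for imaginary powers of the Ornstein--Uhlenbeck generator; these combine to give the dimension-free sectoriality on Gauss space, which is then pulled back to the Hamming cube.
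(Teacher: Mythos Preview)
Your approach matches the paper's: for $(\Leftarrow)$ the paper uses the same Fourier-orthogonality integral, writing $\msf{Rad}_k=\frac{e^{ka}}{2\pi}\int_{-\pi}^{\pi}e^{ikt}e^{-(a+it)\Delta}\,dt$ with $a=\pi/\tan\theta$ (a cosmetic variant of your segment $\{r+i\theta:\theta\in[0,2\pi]\}$), and extracts the slightly stronger bound $\|\msf{Rad}_k\|\leq Me^{ka}$ for all levels $k$, which it reuses later. For $(\Rightarrow)$ the paper likewise does not reproduce the argument and simply attributes the deep direction to Pisier~\cite{Pis82}.
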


The fact that a Banach space satisfying \eqref{eq:conclusionKconv} for some $\theta\in\big(0,\frac{\pi}{2}\big]$ and $M\in[1,\infty)$ is $K$-convex is simple. Indeed, let $a = \frac{\pi}{\tan\theta}$ so that every point $z$ in the interval with endpoints $a+i\pi$, $a-i\pi$ has $|\arg z|\leq \theta$. Then, for $k\in\{0,1,\ldots,n\}$,
\begin{equation}
\frac{1}{2\pi}\int_{-\pi}^\pi e^{ikt} e^{-(a+it)\Delta} \diff t = \frac{1}{2\pi} \int_{-\pi}^\pi \sum_{j=0}^n e^{ikt - (a+it)j} \msf{Rad}_j \diff t = e^{-ka} \msf{Rad}_k,
\end{equation}
which implies that
\begin{equation} \label{eq:boundRadk}
\begin{split}
\big\|\msf{Rad}_k\big\|&_{{L_p(\{-1,1\}^n;X)}\to {L_p(\{-1,1\}^n;X)}}  = e^{ka} \Big\|\frac{1}{2\pi} \int_{-\pi}^\pi e^{ikt} e^{-(a+it)\Delta} \diff t\Big\|_{{L_p(\{-1,1\}^n;X)}\to {L_p(\{-1,1\}^n;X)}} 
\\ &\leq \frac{e^{ka}}{2\pi} \int_{-\pi}^\pi \|e^{-(a+it)\Delta}\|_{{L_p(\{-1,1\}^n;X)}\to {L_p(\{-1,1\}^n;X)}} \diff t \stackrel{\eqref{eq:conclusionKconv}}{\leq} M e^{ka}.
\end{split}
\end{equation}
In particular, $\sup_{n\in\N} \|\msf{Rad}\|_{{L_p(\{-1,1\}^n;X)}\to {L_p(\{-1,1\}^n;X)}} \leq Me^a <\infty$, i.e. $(X,\|\cdot\|_X)$ is $K$-convex.

\smallskip

Here and throughout, we will denote by $\mb{D} = \{z\in\C: |z|<1\}$ the open unit disc. For $r\in[1,\infty)$ consider the lens domain 
\begin{equation} \label{eq:Omega(r)def}
\Omega(r) \eqdef \big\{z\in \C: \ \max\big\{\big|z-i\sqrt{r^2-1}\big|, \big|z+i\sqrt{r^2-1}\big|\big\} < r \big\} \subseteq\mb{D} 
\end{equation}
and notice that $\partial\Omega(r)$ is a Jordan curve with an interior angle $\theta(r)=2\arcsin(1/r)$ at $z=\pm1$. We will need the following simple consequence of Theorem \ref{thm:pisKconv}.

\begin{corollary} \label{cor:lens}
Let $(X,\|\cdot\|_X)$ be a $K$-convex Banach space and $p\in(1,\infty)$. Then there exist some $r=r(p,X)\in[1,\infty)$ and $K=K(p,X)\in[1,\infty)$ such that
\begin{equation} \label{eq:LemmaOmega(r)}
\sup_{w\in\Omega(r)} \sup_{n\in\N} \|w^{\Delta}\|_{{L_p(\{-1,1\}^n;X)}\to {L_p(\{-1,1\}^n;X)}} \leq K.
\end{equation}
\end{corollary}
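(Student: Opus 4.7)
The plan is to deduce the claim directly from Theorem~\ref{thm:pisKconv} by combining the Pisier sectoriality bound with the reflection symmetry from \eqref{eq:symmetrysemigroup} via a geometric covering argument. First I would apply Theorem~\ref{thm:pisKconv} to obtain constants $\theta = \theta(p,X) \in \big(0, \tfrac{\pi}{2}\big]$ and $M = M(p,X) \in [1, \infty)$ such that $\|e^{-z\Delta}\|_{L_p(\{-1,1\}^n;X) \to L_p(\{-1,1\}^n;X)} \leq M$ for every $n \in \N$ and every $z \in \C$ with $|\arg z| \leq \theta$. The key translation is that $w^\Delta = e^{-(-\log w)\Delta}$ as diagonal operators in the Walsh basis (for any chosen branch of $\log w$), so the Pisier bound immediately yields $\|w^\Delta\|_{L_p \to L_p} \leq M$ whenever $|\arg(-\log w)| \leq \theta$. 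Choosing the principal branch, this corresponds to the ``petal'' $V_+ := \{w \in \mb{D}\setminus\{0\} : |\arg w| \leq -\log|w| \cdot \tan\theta\}$, and by the identity \eqref{eq:symmetrysemigroup} the same bound also holds on the reflected petal $V_- := -V_+$.

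It then remains to show that $\Omega(r) \subseteq V_+ \cup V_- \cup \{0\}$ for $r$ sufficiently large depending only on $\theta$; the isolated point $w=0$ is harmless because $0^\Delta$ is the projection onto the constants and therefore has operator norm one. By the invariance of $\Omega(r)$ under $w \mapsto -w$, it suffices to verify $\Omega(r) \cap \{\mathrm{Re}(w) \geq 0\} \setminus \{0\} \subseteq V_+$, i.e.~that $|\arg w| \leq -\log|w| \cdot \tan\theta$ holds on this set. The most restrictive points on the boundary of this region are the vertex $w = 1$, where the interior half-angle of $\Omega(r)$ equals $\arcsin(1/r)$ and so one needs $r \geq 1/\sin\theta$, and the ``top'' of the lens $w = i(r - \sqrt{r^2-1})$, where $|\arg w| = \tfrac{\pi}{2}$ and the condition reduces to $r \geq \cosh\big(\pi/(2\tan\theta)\big)$, using that $r - \sqrt{r^2-1} = 1/(r + \sqrt{r^2-1})$. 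Taking $r := \max\{1/\sin\theta, \cosh(\pi/(2\tan\theta))\}$ and $K := M$ would then complete the argument.

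The main obstacle I anticipate is the rigorous verification of the inclusion $\Omega(r) \cap \{\mathrm{Re}(w) > 0\} \subseteq V_+$ on the entire set, not merely at the two extremal boundary points above. One direct approach is to parameterize the upper arc of $\partial\Omega(r)$ via $y(x) = \sqrt{r^2 - x^2} - \sqrt{r^2-1}$ for $x \in [0,1]$ and verify by calculus that $x \mapsto \arctan(y(x)/x) + \tfrac{1}{2}\log(x^2 + y(x)^2) \cdot \tan\theta$ is nonpositive on $(0,1]$, which is plausible because the critical points of this function should coincide with the two extremal cases handled above. A cleaner alternative is to exploit the fact that $w \mapsto \log \|w^\Delta\|_{L_p \to L_p}$ is subharmonic (as the logarithm of the norm of an operator-valued polynomial in $w$) and to apply the maximum principle on $\Omega(r)$: the boundary data would then be controlled by the Pisier bound on the portions of $\partial\Omega(r)$ lying inside $V_+ \cup V_-$ (in particular near the vertices $\pm 1$, which by the vertex computation above are covered) together with the contractivity \eqref{eq:contract<0} on the segment $[-1,1]$, yielding a uniform bound on $\Omega(r)$ without any further calculus.
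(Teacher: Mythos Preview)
Your overall strategy is correct and is the same as the paper's: translate Pisier's sectorial bound via $w=e^{-z}$ to get the petal $V_+$, then invoke \eqref{eq:symmetrysemigroup} to get $V_-$, and finally find $r$ so that $\Omega(r)$ is covered. The difference is in how the covering is achieved. The paper does \emph{not} try to cover $\Omega(r)$ by the petals alone; instead it uses the Rademacher projection bound \eqref{eq:boundRadk} (derived just before the corollary) to get $\|w^\Delta\|\le \frac{M}{1-e^{-a}}$ on a small disc $\{|w|<e^{-2a}\}$, and then observes that the union of the two petals and this disc contains some $\Omega(r)$. This sidesteps the delicate part of your plan entirely: once you have a disc around the origin, the only place where the lens can escape the petals is near the imaginary axis, and for small enough lens aperture that region sits inside the disc.

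Your first proposed verification (direct calculus on the upper arc of $\partial\Omega(r)$) is plausible and can be pushed through, but it is not just a matter of checking the vertex $w=1$ and the top $w=i(r-\sqrt{r^2-1})$; one really has to show that the function you write down has no interior sign change, and this takes additional work that you have not yet done. Your second proposed verification via subharmonicity has a genuine gap: the segment $[-1,1]$ lies in the \emph{interior} of $\Omega(r)$, not on its boundary, so the contractivity \eqref{eq:contract<0} does not control the boundary data for the maximum principle on $\Omega(r)$. The boundary $\partial\Omega(r)$ consists of the two circular arcs only, and on the portions of those arcs that are not in $V_+\cup V_-$ you have no a priori bound; this is precisely the region you are trying to control. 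If you instead apply the maximum principle on the upper half $\Omega(r)\cap\{\mathrm{Im}\,w>0\}$, then $[-1,1]$ does become part of the boundary, but the other boundary piece is the full upper arc, on which you again lack a bound unless you have already established the inclusion you are after.

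In short: the paper's device of adding a small disc (via \eqref{eq:boundRadk}) is exactly what makes the covering argument routine, and it is the one idea your proposal is missing.
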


\begin{proof}
By Theorem \ref{thm:pisKconv} and \eqref{eq:symmetrysemigroup}, for every $w\in\{e^{-z}: \ |\mathrm{arg}z|<\theta\}$, we have
\begin{equation} \label{eq:boundsymmetricdomain}
\|w^{\Delta}\|_{{L_p(\{-1,1\}^n;X)}\to {L_p(\{-1,1\}^n;X)}} = \|(-w)^{\Delta}\|_{{L_p(\{-1,1\}^n;X)}\to {L_p(\{-1,1\}^n;X)}} \leq M.
\end{equation}
Moreover, by \eqref{eq:boundRadk}, we get
\begin{equation}
\begin{split}
\|w^{\Delta}\|&_{{L_p(\{-1,1\}^n;X)}\to {L_p(\{-1,1\}^n;X)}} \\ & \leq \sum_{k=0}^n |w|^k \big\|\msf{Rad}_k\big\|_{{L_p(\{-1,1\}^n;X)}\to {L_p(\{-1,1\}^n;X)}}
 \stackrel{\eqref{eq:boundRadk}}{\leq} M \sum_{k=0}^n \big(|w|e^a\big)^k,
\end{split}
\end{equation}
which implies that for $|w|<e^{-2a}$, we also have the estimate
\begin{equation} \label{eq:bounddisc}
\|w^{\Delta}\|_{{L_p(\{-1,1\}^n;X)}\to {L_p(\{-1,1\}^n;X)}} \leq \frac{M}{1-e^{-a}}.
\end{equation}
Combining the domains for which \eqref{eq:boundsymmetricdomain} and \eqref{eq:bounddisc} hold, one can easily deduce that there exists $r\in[1,\infty)$ such that
\begin{equation}
\Omega(r) \subseteq \{e^{-z}: \ |\mathrm{arg}z|<\theta\} \cup \{-e^{-z}: \ |\mathrm{arg}z|<\theta\} \cup\{w\in\mb{D}: \ |w|<e^{-2a}\}
\end{equation}
and \eqref{eq:LemmaOmega(r)} follows with $K=\frac{M}{1-e^{-a}}$.
\end{proof}

We record for ease of future reference the following \mbox{calculation of conformal mappings.}

\begin{lemma} \label{lem:computeconformal}
For $\alpha\in(0,\infty)$, let $\phi_\alpha:\mb{C}\setminus(-\infty,0]\to\mb{C}$ be a holomorphic branch of $z\mapsto z^{\alpha}$. Also, consider the M\"obius transformations $\psi_1(z) = \frac{1-z}{1+z}$ and $\psi_2(z)=\frac{z+1}{z-1}$. Then, for every $r\in[1,\infty)$, the map $\psi_1\circ \phi_{\pi/\theta(r)}\circ\psi_1$ is a conformal equivalence between $\Omega(r)$ and the unit disc $\mb{D}$. Furthermore, the map $\psi_2\circ\phi_{\pi/(2\pi-\theta(r))}\circ\psi_2$ is a conformal equivalence between the complement $\overline{\Omega(r)}^c$ of $\overline{\Omega(r)}$ and the complement $\overline{\mb{D}}^c$ of the closed unit disc.
\end{lemma}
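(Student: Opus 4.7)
The plan is a three-step unfolding: use a Möbius transformation to send the two vertices $\pm 1$ of $\partial \Omega(r)$ to $\{0,\infty\}$, which converts the two bounding circular arcs into straight rays from the origin (and hence $\Omega(r)$, or its complement, into a wedge); then open the wedge into the right half-plane via a suitable power $z\mapsto z^\alpha$; then Möbius-transform the right half-plane onto $\mb{D}$ (respectively $\overline{\mb{D}}^c$). The single fact driving the whole calculation is that Möbius maps send circles and lines to circles and lines, are conformal, and here have real coefficients (so preserve the real axis). Since both boundary arcs of $\Omega(r)$ pass through $\pm 1$, their images under any Möbius map sending $\{-1,1\}\to\{0,\infty\}$ must be full lines through $0$, i.e., pairs of opposite rays.

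For the first assertion, $\psi_1$ satisfies $\psi_1(1)=0$, $\psi_1(-1)=\infty$, $\psi_1(0)=1$, and $\psi_1(\R)=\R$. By the preceding remarks, $\psi_1(\Omega(r))$ is a wedge of opening angle $\theta(r)$ (the angle being preserved by conformality at $\pm1$) which is symmetric about $\R$ and contains $\psi_1(0)=1$, hence equals $W_1\eqdef\{w\in\C:|\mathrm{arg}\,w|<\theta(r)/2\}$. Since $\theta(r)\le\pi$ for $r\geq 1$, we have $W_1\subseteq\C\setminus(-\infty,0]$, so $\phi_{\pi/\theta(r)}$ is a well-defined conformal bijection $W_1 \to H$, where $H\eqdef\{w:\mathrm{Re}\,w>0\}$ is the right half-plane. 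Finally, $\psi_1$ maps $H$ conformally onto $\mb{D}$: the imaginary axis maps to the unit circle because $|\psi_1(iy)|=|1-iy|/|1+iy|=1$, and the interior point $1\in H$ is sent to $0\in\mb{D}$. Composing the three conformal bijections yields the first claim.

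For the second assertion, the same template applies with $\psi_2$ in place of $\psi_1$: now $\psi_2(1)=\infty$, $\psi_2(-1)=0$, $\psi_2(0)=-1$, and $\psi_2(\R)=\R$. Since $(-1,1)\subseteq\Omega(r)$ and $\psi_2(0)=-1$, the image $\psi_2(\Omega(r))$ is the wedge of opening angle $\theta(r)$ symmetric about the \emph{negative} real axis. Passing to the complement, $\psi_2(\overline{\Omega(r)}^c)$ is then the wedge $W_2\eqdef\{w:|\mathrm{arg}\,w|<\pi-\theta(r)/2\}$ of opening angle $2\pi-\theta(r)$ around the positive real axis; in particular $W_2\subseteq\C\setminus(-\infty,0]$, so $\phi_{\pi/(2\pi-\theta(r))}$ is a conformal bijection $W_2\to H$ (the exponent is chosen precisely so that the new half-opening becomes $\pi/2$). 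Finally $\psi_2$ sends $H$ conformally onto $\overline{\mb{D}}^c$: the same calculation gives $|\psi_2(iy)|=1$, and now $\psi_2(1)=\infty\in\overline{\mb{D}}^c$ certifies that $H$ maps to the exterior.

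The argument is essentially bookkeeping, and the only place where one must be careful is in getting the orientations correct: one must verify, in each case, that the image wedge lands on the correct side of the real axis so that the chosen branch of $z\mapsto z^\alpha$ is well-defined there and opens the wedge onto $H$ rather than onto the left half-plane. This is handled by evaluating each Möbius map at the single interior test point $0$, as above. No other obstacle is anticipated.
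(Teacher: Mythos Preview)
Your proposal is correct and follows essentially the same approach as the paper: send the vertices $\pm1$ to $0,\infty$ via a M\"obius map, straighten the resulting wedge to a half-plane via a power map, and then M\"obius-transform the half-plane to $\mb{D}$ (respectively $\overline{\mb{D}}^c$). The paper's proof is terser---it treats only the first case and dismisses the second as ``identical''---whereas you supply the orientation check via the test point $0$ and spell out the complement case with $\psi_2$ explicitly; both are welcome additions but not a different method.
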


\begin{proof}
Since $\psi_1(-1)=0$, $\psi_1(1)=\infty$ and M\"obius transformations preserve angles, we have
\begin{equation}
\psi_1\big(\Omega(r)\big) = \Big\{ z\in\mb{C}: \ |\arg z|< \frac{\theta(r)}{2}\Big\}.
\end{equation}
Therefore, composing with $\phi_{\pi/\theta(r)}$, we get
\begin{equation}
\phi_{\pi/\theta(r)}\circ\psi_1\big(\Omega(r)\big) = \big\{z\in\mb{C}: \ \mathrm{Re} z>0\big\},
\end{equation}
which immediately implies that $\psi_1\circ \phi_{\pi/\theta(r)}\circ\psi_1\big(\Omega(r)\big)=\mb{D}$. Since all the functions involved are conformal equivalences at their domains of definition the proof is complete. The proof of the claim for the complement $\overline{\Omega(r)}^c$ of $\overline{\Omega(r)}$ is identical.
\end{proof}

We can now proceed with the proof of Theorem \ref{thm:mendel-naor}. The crucial proposition is the following.

\begin{proposition} \label{prop:crucialheat}
Fix a $K$-convex Banach space $(X,\|\cdot\|_X)$, $p\in(1,\infty)$ and $K\in(0,\infty)$. Let $\Omega\subseteq\mb{D}$ be a simply connected domain bounded by a Jordan curve $\gamma$ such that $(-1,1) \subseteq \Omega$ and suppose that
\begin{equation} \label{eq:assumppropKconv}
\sup_{w\in \Omega} \sup_{n\in\N} \|w^\Delta\|_{{L_p(\{-1,1\}^n;X)}\to {L_p(\{-1,1\}^n;X)}} \leq K.
\end{equation}
Also, let $\phi_\Omega: \Omega\to \mb{D}$ be a conformal mapping of $\Omega$ onto $\mb{D}$ with $\phi_\Omega(0)=0$ which extends continuously as a homeomorphism between $\Omega\cup\gamma$ and $\overline{\mb{D}}$. Then, for every $n,d\in\N$ with $d\in\{0,1,\ldots,n-1\}$ and every function $f:\{-1,1\}^n\to X$ in the $d$-th tail space, we have
\begin{equation}
\forall \ t\geq0, \ \ \ \|e^{-t\Delta}f\|_{L_p(\{-1,1\}^n;X)} \leq K \big|\phi_\Omega\big(e^{-t}\big)\big|^d \|f\|_{L_p(\{-1,1\}^n;X)}.
\end{equation}
\end{proposition}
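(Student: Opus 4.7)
The plan is to parametrize $\Omega$ by its Riemann map and apply a vector-valued iterated Schwarz lemma to an $L_p(\{-1,1\}^n;X)$-valued holomorphic function on $\mathbb{D}$. The case $d=0$ is immediate from the hypothesis \eqref{eq:assumppropKconv} applied at $w=e^{-t}\in(-1,1)\subseteq\Omega$, so I focus on $d\geq 1$.

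First I would set $\psi\eqdef\phi_\Omega^{-1}:\mathbb{D}\to\Omega$, so that $\psi$ is holomorphic with $\psi(0)=0$, and introduce the $L_p(\{-1,1\}^n;X)$-valued function
$$H(z) \eqdef \psi(z)^{\Delta} f = \sum_{A\subseteq\{1,\ldots,n\}} \psi(z)^{|A|}\widehat{f}(A) w_A,\qquad z\in\mathbb{D}.$$
As a finite sum of vectors with polynomial-in-$\psi(z)$ scalar coefficients, $H$ is holomorphic on $\mathbb{D}$, and the hypothesis \eqref{eq:assumppropKconv} directly yields $\|H(z)\|_{L_p(\{-1,1\}^n;X)}\leq K\|f\|_{L_p(\{-1,1\}^n;X)}$ for every $z\in\mathbb{D}$.

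Next I would exploit the tail-space hypothesis: since $\widehat{f}(A)=0$ whenever $|A|<d$, each surviving summand above involves $\psi(z)^{|A|}$ with $|A|\geq d$, and since $\psi(0)=0$, each such factor vanishes to order $|A|\geq d$ at $z=0$. Consequently $H$ has a zero of order at least $d$ at the origin, so $h(z)\eqdef H(z)/z^d$ extends to a holomorphic $L_p(\{-1,1\}^n;X)$-valued function on all of $\mathbb{D}$.

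The heart of the argument is then a vector-valued Schwarz lemma applied to $h$. Since $z\mapsto\|h(z)\|_{L_p(\{-1,1\}^n;X)}$ is subharmonic (being the supremum, over the unit ball of $L_p(\{-1,1\}^n;X)^\ast$, of the moduli $|\langle x^\ast,h(z)\rangle|$ of scalar holomorphic functions), the maximum principle on discs of radius $r\in(0,1)$ yields
$$\|h(z)\|_{L_p(\{-1,1\}^n;X)}\leq r^{-d}\max_{|w|=r}\|H(w)\|_{L_p(\{-1,1\}^n;X)} \leq r^{-d}K\|f\|_{L_p(\{-1,1\}^n;X)}$$
for $|z|<r$. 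Letting $r\to 1^-$ gives $\|H(z)\|_{L_p(\{-1,1\}^n;X)}\leq K|z|^d\|f\|_{L_p(\{-1,1\}^n;X)}$ throughout $\mathbb{D}$. For $t>0$ we have $e^{-t}\in(-1,1)\subseteq\Omega$, so substituting $z=\phi_\Omega(e^{-t})\in\mathbb{D}$ and noting $\psi(z)=e^{-t}$ produces the claimed inequality. The only delicate step is the vector-valued maximum principle, which is standard but relies on the subharmonicity of $\|h(\cdot)\|$ for a Banach-valued holomorphic $h$.
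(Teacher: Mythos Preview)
Your proof is correct and rests on the same complex-analytic idea as the paper's --- that a holomorphic function vanishing to order $d$ at the origin is dominated by $|z|^d$ via the maximum principle --- but you apply it at a different level. The paper works on the scalar side: it first proves the pointwise bound $|p(e^{-t})|\leq|\phi_\Omega(e^{-t})|^d\|p\|_{\mathscr{C}(\overline{\Omega})}$ for every polynomial $p\in\mathrm{span}\{w^d,\ldots,w^n\}$ by applying the maximum principle to $p/\phi_\Omega^d$ on $\Omega$, then invokes Hahn--Banach and the Riesz representation theorem to produce a measure $\tau_t$ on $\overline{\Omega}$ of total variation at most $|\phi_\Omega(e^{-t})|^d$ satisfying $\int p\,\mathrm{d}\tau_t=p(e^{-t})$, and finally averages $w^\Delta f$ against $\tau_t$. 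You instead pull the entire vector-valued map $w\mapsto w^\Delta f$ back through $\psi=\phi_\Omega^{-1}$ to an $L_p(\{-1,1\}^n;X)$-valued holomorphic function on $\mathbb{D}$ vanishing to order $d$ at the origin, and apply a Schwarz-type lemma directly via subharmonicity of the norm. Your route is more direct and sidesteps the duality machinery altogether; the paper's route has the advantage of fitting the unified ``Figiel-type'' template used throughout, in which one bounds a linear functional on a space of polynomials and then represents it by a measure. One minor remark: your final substitution covers $t>0$; the boundary case $t=0$ (where $e^{-t}=1\in\gamma$) follows either by continuity of $\phi_\Omega$ up to $\gamma$ or simply from $|\phi_\Omega(1)|=1$ and $K\geq 1$.
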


\begin{proof}
Fix $n\in\N$, $d\in\{0,1,\ldots,n-1\}$ and let $f:\{-1,1\}^n\to X$ be a function in the $d$-th tail space. For $t\geq0$, consider the linear functional \mbox{$\zeta_t:\big(\mathrm{span}\{w^d,w^{d+1},\ldots,w^n\}, \|\cdot\|_{\ms{C}(\overline{\Omega})}\big) \to\C$ given by}
\begin{equation}
\zeta_t\Big(\sum_{k=d}^n a_k w^k \Big) \eqdef \sum_{k=d}^n a_k e^{-tk},
\end{equation} 
or $\zeta_t(p) = p(e^{-t})$ if $p(w)$ is a polynomial of degree $n$ which is a multiple of $w^d$. If $p$ is such a polynomial, consider the function $h:\Omega\to\C$ given by
\begin{equation}
\forall \ w\in\Omega, \ \ \ h(w) \eqdef \frac{p(w)}{\phi_\Omega(w)^d}.
\end{equation}
Notice that $\phi_\Omega$ does not vanish on $\Omega\setminus\{0\}$ and furthermore it has a single root at 0, therefore the multiplicity of the root 0  in the numerator is at least the multiplicity in the denominator. Thus $h$ is a holomorphic function on $\Omega$. Furthermore, for $w\in\gamma$, we have $|\phi_\Omega(w)| = 1$ since $\phi_\Omega$ is a homeomorphism between $\gamma$ and $\partial\mb{D}$ (such a conformal map $\phi_\Omega$ always exists by Caratheodory's theorem, see \cite[Theorem~5.1.1]{Kra06}). Therefore, by the maximum principle,
\begin{equation}
\sup_{w\in\Omega} |h(w)| = \max_{w\in\gamma} |h(w)| = \max_{w\in\gamma} \frac{|p(w)|}{|\phi_\Omega(w)|^d} = \|p\|_{\ms{C}(\gamma)}= \|p\|_{\ms{C}(\overline{\Omega})},
\end{equation}
which implies that for $w\in\overline{\Omega}$, we have
\begin{equation} \label{eq:usedconformal}
|p(w)| \leq |\phi_\Omega(w)|^d \|p\|_{\ms{C}(\overline{\Omega})}.
\end{equation}
Applying \eqref{eq:usedconformal} for $w=e^{-t}\in\Omega$, we deduce that the linear functional $\zeta_t$ satisfies
\begin{equation} \label{ineq1}
\forall \ p\in\mathrm{span}\{w^d,w^{d+1},\ldots,w^n\}, \ \ \ |\zeta_t(p)| \leq \big|\phi_\Omega\big(e^{-t}\big)\big|^d \|p\|_{\ms{C}(\overline{\Omega})}.
\end{equation}
Therefore, by the Hahn--Banach theorem and the Riesz representation theorem, there exists a complex measure $\tau_t$ on $\overline{\Omega}$ such that $\|\tau_t\|_{\ms{M}(\overline{\Omega})} \leq |\phi_\Omega(e^{-t})|^d$ and
\begin{equation} \label{ide1}
p\in\mathrm{span}\{w^d,w^{d+1},\ldots,w^n\} \ \ \ \Longrightarrow \ \ \ \int_{\overline{\Omega}} p(w)\diff\tau_t(w) = p\big(e^{-t}\big).
\end{equation}
Since $\widehat{f}(A)=0$ when $|A|<d$, we get that
\begin{equation}
\begin{split}
\|e^{-t\Delta}f\|_{L_p(\{-1,1\}^n;X)}& \stackrel{\eqref{ide1}}{=} \Big\| \int_{\overline{\Omega}} w^\Delta f \diff\tau_t(w)\Big\|_{L_p(\{-1,1\}^n;X)} \leq \int_{\overline{\Omega}} \|w^\Delta f\|_{L_p(\{-1,1\}^n;X)} \diff|\tau_t|(w)
\\ & \stackrel{\eqref{eq:assumppropKconv}}{\leq} K \int_{\overline{\Omega}} \|f\|_{L_p(\{-1,1\}^n;X)} \diff|\tau_t|(w) = K\|\tau_t\|_{\ms{M}(\overline{\Omega})} \|f\|_{L_p(\{-1,1\}^n;X)}
\\ & \stackrel{\eqref{ineq1}}{\leq} K\big| \phi_\Omega\big(e^{-t}\big)\big|^d \|f\|_{L_p(\{-1,1\}^n;X)},
\end{split}
\end{equation}
which concludes the proof.
\end{proof}

\noindent {\it Proof of Theorem \ref{thm:mendel-naor}.} Fix $n\in\N$, $d\in\{0,1,\ldots,n-1\}$, $p\in(1,\infty)$ and let $f:\{-1,1\}^n\to X$ be a function in the $d$-th tail space. Using the notation introduced after Theorem \ref{thm:pisKconv}, for every $t\geq 2a$,
\begin{equation}
\begin{split}
\|e^{-t\Delta}f\|_{L_p(\{-1,1\}^n;X)} \leq \sum_{k=d}^n e^{-tk} \big\|\msf{Rad}_kf\big\|_{L_p(\{-1,1\}^n;X)}
 \stackrel{\eqref{eq:boundRadk}}{\leq} Ke^{-\frac{1}{2}td} \|f\|_{L_p(\{-1,1\}^n;X)},
\end{split}
\end{equation}
where $K=\frac{M}{1-e^{-a}}$. We will now treat the range $t\leq 2a$. By Corollary \ref{cor:lens}, there exists $r=r(p,X)\in[1,\infty)$ such that
\begin{equation}
\sup_{w\in\Omega(r)} \sup_{n\in\N} \|w^\Delta\|_{{L_p(\{-1,1\}^n;X)}\to {L_p(\{-1,1\}^n;X)}} \leq K,
\end{equation}
where $\Omega(r)$ is the domain \eqref{eq:Omega(r)def}. Therefore, by Proposition \ref{prop:crucialheat}, we have
\begin{equation}
\forall \ t\geq0, \ \ \ \|e^{-t\Delta}f\|_{L_p(\{-1,1\}^n;X)} \leq K \big|\phi_{\Omega(r)}\big(e^{-t}\big)\big|^d \|f\|_{L_p(\{-1,1\}^n;X)}.
\end{equation}
To conclude the proof, we will show that there exists $c=c(p,X)$ such that
\begin{equation} \label{eq:boundconformalbypower}
\forall \ 0\leq t\leq 2a, \ \ \ \big|\phi_{\Omega(r)}\big(e^{-t}\big)\big| \leq e^{-c t^{\pi/\theta(r)}},
\end{equation}
where $\theta(r) = 2\arcsin(1/r) \in (0,\pi]$ which would then imply the conclusion of Theorem \ref{thm:mendel-naor} with $A(p,X) = \frac{\pi}{\theta(r(p,X))} \in[1,\infty)$. By Lemma \ref{lem:computeconformal}, if $\theta=\theta(r)$, we have
\begin{equation} \label{eq:phiomegar}
\phi_{\Omega(r)}\big(e^{-t}\big) = \frac{(1+e^{-t})^{\pi/\theta}- (1-e^{-t})^{\pi/\theta}}{(1+e^{-t})^{\pi/\theta} + (1-e^{-t})^{\pi/\theta}}.
\end{equation}
Therefore, 
\begin{equation}
\begin{split}
\log\big|\phi_{\Omega(r)}\big(e^{-t}\big)\big| = \log\left( 1- \frac{2(1-e^{-t})^{\pi/\theta}}{(1+e^{-t})^{\pi/\theta} + (1-e^{-t})^{\pi/\theta}}\right) & \leq - \frac{2(1-e^{-t})^{\pi/\theta}}{(1+e^{-t})^{\pi/\theta} + (1-e^{-t})^{\pi/\theta}}
\end{split}
\end{equation}
and for $t\leq2a$,
\begin{equation}
 - \frac{2(1-e^{-t})^{\pi/\theta}}{(1+e^{-t})^{\pi/\theta} + (1-e^{-t})^{\pi/\theta}} \leq -2^{1-\frac{\pi}{\theta}}(1-e^{-t})^{\pi/\theta} \leq \frac{-2^{1-\frac{\pi}{\theta}}(1-e^{-2a})^{\pi/\theta}}{(2a)^{\pi/\theta}} t^{\pi/\theta},
\end{equation}
which implies \eqref{eq:boundconformalbypower} with $c = \frac{2^{1-\frac{\pi}{\theta}}(1-e^{-2a})^{\pi/\theta}}{(2a)^{\pi/\theta}}$.
\hfill$\Box$

\medskip

We now proceed to prove the dual of Theorem \ref{thm:mendel-naor}, namely the improved lower bound on the decay of the heat semigroup, Theorem \ref{thm:reverseheatKconvex}. Even though Theorems \ref{thm:mendel-naor} and \ref{thm:reverseheatKconvex} are not formally dual to one another, there is a continuous analogy between the techniques used in their proofs. We will need the following elementary result from complex analysis.

\begin{lemma}
Let $\Omega\subseteq\C$ be a bounded simply connected domain and consider a conformal mapping $\phi_{\Omega^c}:\overline{\Omega}^c\to\overline{\mb{D}}^c$ of the complement of $\overline{\Omega}$ onto the complement of the closed unit disc with $\phi_{\Omega^c}(\infty)=\infty$. Then,
\begin{equation} \label{eq:limitquotient}
\lim_{z\to\infty} \frac{\phi_{\Omega^c}(z)}{z} = \beta \in \mb{C}\setminus\{0\}.
\end{equation}
\end{lemma}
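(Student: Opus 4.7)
The plan is to reduce the claim about a conformal map fixing $\infty$ to a standard fact about univalent holomorphic maps fixing $0$, via conjugation with the inversion $z\mapsto 1/z$. Since $\Omega$ is bounded, there is $R>0$ with $\overline{\Omega}\subseteq\{|z|<R\}$, so $\{|z|>R\}\subseteq\overline{\Omega}^c$. For $w$ in the punctured disc $\{0<|w|<1/R\}$, I would define
\begin{equation}
\psi(w) \eqdef \frac{1}{\phi_{\Omega^c}(1/w)}.
\end{equation}
The desired conclusion $\beta\in\C\setminus\{0\}$ is then equivalent to showing that $\psi$ extends holomorphically to the origin with $\psi(0)=0$ and $\psi'(0)\neq 0$, in which case $\beta = 1/\psi'(0)$.

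I would carry this out in three short steps. First, the hypothesis $\phi_{\Omega^c}(\infty)=\infty$ means $\phi_{\Omega^c}(1/w)\to\infty$ as $w\to 0$, so $\psi$ is holomorphic and bounded on the punctured disc; Riemann's removable singularity theorem then provides a holomorphic extension with $\psi(0)=0$. Second, $\psi$ is the composition of three injective maps (two inversions and the conformal equivalence $\phi_{\Omega^c}$), so it is injective on the punctured disc; since the value $\psi(0)=0$ is not attained at any other point of the disc (such a point would correspond to another preimage of $\infty$ under $\phi_{\Omega^c}$, contradicting that $\phi_{\Omega^c}$ is a bijection onto $\overline{\mb{D}}^c$), the extended $\psi$ is univalent in a full neighborhood of $0$. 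Third, a univalent holomorphic map has nonvanishing derivative at every interior point: if one had $\psi(w)=cw^k+O(w^{k+1})$ with $c\neq 0$ and $k\geq 2$, then by Rouché or the argument principle the equation $\psi(w)=\eta$ would have $k$ distinct roots in a punctured neighborhood of $0$ for all small $\eta\neq 0$, contradicting injectivity.

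Substituting the Taylor expansion $\psi(w)=\psi'(0)w+O(w^2)$ back via $w=1/z$ yields
\begin{equation}
\phi_{\Omega^c}(z) = \frac{1}{\psi(1/z)} = \frac{z}{\psi'(0)}\cdot\frac{1}{1+O(1/z)},
\end{equation}
so $\phi_{\Omega^c}(z)/z\to 1/\psi'(0)$ as $z\to\infty$, giving the limit $\beta=1/\psi'(0)\in\C\setminus\{0\}$. I do not anticipate any serious obstacle: the only step requiring a moment's care is verifying that the extended $\psi$ is univalent across $w=0$ (rather than merely on the punctured disc), which is what forces $\psi'(0)\neq 0$ and hence $\beta\neq 0$; every ingredient is a textbook complex-analytic fact.
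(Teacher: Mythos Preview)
Your proposal is correct and follows essentially the same approach as the paper: conjugate $\phi_{\Omega^c}$ by the inversion $z\mapsto 1/z$, apply Riemann's removable singularity theorem to the resulting bounded holomorphic map, and use injectivity to force the derivative at $0$ to be nonzero. Your write-up is in fact slightly more detailed than the paper's in justifying why the extended map is univalent across $0$ and hence has nonvanishing derivative there.
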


\begin{proof}
Without loss of generality assume that $0\in\Omega$ and let $V \eqdef\big\{z\in\mb{C}: \ \frac{1}{z}\in \overline{\Omega}^c\big\}$. Then $V\cup\{0\}$ is a domain and the function $F:V\to \C$ given by
\begin{equation}
\forall \ z\in V, \ \ \ F(z)\eqdef \frac{1}{\phi_{\Omega^c}\big(1/z\big)}
\end{equation}
is holomorphic, injective and satisfies $|F(z)|\leq 1$ for every $z\in V$. Therefore, by Riemann's theorem on removable singularities, $F$ can be holomorphically extended at 0. Furthermore, $F(0)=0$, since otherwise $\lim_{z\to\infty} \phi_{\Omega^c}(z) \in \mb{C}$. Since $F$ is injective, we also have $F'(0)\neq0$. Thus,
\begin{equation}
\lim_{z\to\infty} \frac{\phi_{\Omega^c}(z)}{z} = \lim_{w\to0} w \phi_{\Omega^c}(1/w) = \lim_{w\to0} \frac{w}{F(w)} = \frac{1}{F'(0)} \in \mb{C}\setminus\{0\},
\end{equation}
which concludes the proof.
\end{proof}

\begin{proposition} \label{prop:reverseheatcrucial}
Fix a $K$-convex Banach space $(X,\|\cdot\|_X)$, $p\in(1,\infty)$ and $K\in(0,\infty)$. Let $\Omega\subseteq\mb{D}$ be a simply connected domain bounded by a Jordan curve $\gamma$ and suppose that
\begin{equation} \label{eq:assumppropKconv0}
\sup_{w\in\Omega} \sup_{n\in\N}\|w^\Delta\|_{{L_p(\{-1,1\}^n;X)}\to {L_p(\{-1,1\}^n;X)}} \leq K.
\end{equation}
Also, let $\phi_{\Omega^c}:\overline{\Omega}^c\to\overline{\mb{D}}^c$ be a conformal mapping of the complement of $\overline{\Omega}$ onto the complement of the closed unit disc which extends continuously as a homeomorphism between $\Omega^c$ and $\mb{D}^c$. Then, for every $n,d\in\N$ with $d\in\{1,\ldots,n\}$ and every function\mbox{ $f:\{-1,1\}^n\to X$ of degree at most $d$, we have}
\begin{equation} \label{eq:reverseheatconformal}
\forall \ t\geq0, \ \ \ \|e^{-t\Delta}f\|_{L_p(\{-1,1\}^n;X)} \geq \frac{1}{K\big|\phi_{\Omega^c}\big(e^{t}\big)\big|^d} \|f\|_{L_p(\{-1,1\}^n;X)}.
\end{equation}
\end{proposition}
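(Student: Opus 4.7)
The plan is to dualize the argument used in the proof of Proposition \ref{prop:crucialheat}. Specifically, I would first establish the forward bound
\begin{equation*}
\|e^{t\Delta}f\|_{L_p(\{-1,1\}^n;X)} \leq K\big|\phi_{\Omega^c}\big(e^t\big)\big|^d \|f\|_{L_p(\{-1,1\}^n;X)}
\end{equation*}
for every function $f:\{-1,1\}^n\to X$ of degree at most $d$, and then apply it to $e^{-t\Delta}f$ (which has the same spectral support as $f$, hence also has degree at most $d$) to conclude \eqref{eq:reverseheatconformal}.

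To obtain this forward bound, I would introduce the linear functional $\varphi_t:\big(\mathrm{span}\{1,w,\ldots,w^d\},\|\cdot\|_{\ms{C}(\overline{\Omega})}\big)\to\C$ given by $\varphi_t(p)=p(e^t)$. Since $\Omega\subseteq\mb{D}$ we have $e^t\in\overline{\Omega}^c$, so I would estimate the norm of $\varphi_t$ by a maximum-principle argument carried out on the \emph{exterior} of $\Omega$. Concretely, for a polynomial $p$ with $\mathrm{deg}(p)\leq d$, define
\begin{equation*}
h(z) \eqdef \frac{p(z)}{\phi_{\Omega^c}(z)^d}, \qquad z\in\overline{\Omega}^c.
\end{equation*}
The function $h$ is holomorphic on $\overline{\Omega}^c$ since $\phi_{\Omega^c}$ does not vanish there, and by \eqref{eq:limitquotient} we have $\phi_{\Omega^c}(z)\sim\beta z$ as $z\to\infty$ with $\beta\neq0$, so $h$ has a finite limit at infinity and extends to a bounded holomorphic function on $\overline{\Omega}^c\cup\{\infty\}$. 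Since $|\phi_{\Omega^c}|\equiv 1$ on $\gamma$, the maximum modulus principle yields
\begin{equation*}
\sup_{z\in\overline{\Omega}^c}|h(z)| = \max_{z\in\gamma}\frac{|p(z)|}{|\phi_{\Omega^c}(z)|^d} = \|p\|_{\ms{C}(\gamma)} = \|p\|_{\ms{C}(\overline{\Omega})},
\end{equation*}
where the last equality uses the maximum principle for $p$ itself on $\overline{\Omega}$. Taking $z=e^t$ yields $|\varphi_t(p)|\leq |\phi_{\Omega^c}(e^t)|^d\|p\|_{\ms{C}(\overline{\Omega})}$.

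With this norm bound in hand, the Hahn--Banach theorem and the Riesz representation theorem produce a complex measure $\tau_t$ on $\overline{\Omega}$ with $\|\tau_t\|_{\ms{M}(\overline{\Omega})}\leq |\phi_{\Omega^c}(e^t)|^d$ such that $\int_{\overline{\Omega}}p(w)\diff\tau_t(w)=p(e^t)$ for every polynomial of degree at most $d$. Since $\widehat{f}(A)=0$ whenever $|A|>d$, expanding $w^\Delta f=\sum_{|A|\leq d}w^{|A|}\widehat{f}(A)w_A$ and integrating termwise against $\tau_t$ gives $\int_{\overline{\Omega}}w^\Delta f\diff\tau_t(w) = e^{t\Delta}f$, so that assumption \eqref{eq:assumppropKconv0} together with the triangle inequality yields
\begin{equation*}
\|e^{t\Delta}f\|_{L_p(\{-1,1\}^n;X)} \leq \int_{\overline{\Omega}}\|w^\Delta f\|_{L_p(\{-1,1\}^n;X)}\diff|\tau_t|(w) \leq K\big|\phi_{\Omega^c}\big(e^t\big)\big|^d \|f\|_{L_p(\{-1,1\}^n;X)},
\end{equation*}
and substituting $e^{-t\Delta}f$ in place of $f$ gives \eqref{eq:reverseheatconformal}.

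The principal obstacle is the maximum-principle step on the unbounded exterior domain: for the argument above to go through, the auxiliary function $h$ must remain bounded at infinity, and this is precisely where the nondegeneracy $\lim_{z\to\infty}\phi_{\Omega^c}(z)/z=\beta\neq 0$ from \eqref{eq:limitquotient} is crucial. Once the boundedness at infinity of $h$ is secured, the remainder of the proof is a routine adaptation of the duality scheme used throughout Section \ref{sec:2} and in Proposition \ref{prop:crucialheat}, with the conformal map $\phi_{\Omega^c}$ onto the exterior of $\overline{\mb{D}}$ playing the role that $\phi_\Omega$ played in the interior.
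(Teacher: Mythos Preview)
Your proposal is correct and follows essentially the same approach as the paper: you introduce the evaluation functional $\varphi_t(p)=p(e^t)$, bound its norm via the maximum principle applied to $h(z)=p(z)/\phi_{\Omega^c}(z)^d$ on the exterior domain (using \eqref{eq:limitquotient} to handle the behavior at infinity), and then invoke Hahn--Banach and Riesz to produce a representing measure on $\overline{\Omega}$. The paper's proof is identical in structure and detail, concluding by noting that the resulting bound on $\|e^{t\Delta}f\|_{L_p}$ is equivalent to \eqref{eq:reverseheatconformal}.
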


\begin{proof}
Fix $n\in\N$, $d\in\{1,\ldots,n\}$ and let $f:\{-1,1\}^n\to X$ be a function of degree at most $d$. For $t\geq0$, consider the linear functional $\varphi_t : \big(\mathrm{span}\{1,w,\ldots,w^d\},\|\cdot\|_{\ms{C}(\overline{\Omega})}\big)\to \C$ given by
\begin{equation}
\varphi_t\Big(\sum_{k=0}^d a_k w^k\Big) \eqdef \sum_{k=0}^d a_k e^{tk},
\end{equation}
or $\varphi_t(p) = p(e^t)$ when $p$ is a polynomial with $\mathrm{deg}(p)\leq d$. If $p$ is such a polynomial, consider the function $h:\overline{\Omega}^c\to \mb{C}$ given by
\begin{equation}
\forall \ z\in\overline{\Omega}^c, \ \ \ h(z) \eqdef \frac{p(z)}{\phi_{\Omega^c}(z)^d}.
\end{equation}
Notice that, since the conformal map $\phi_{\Omega^c}$ extends as a homeomorphism between $\Omega^c$ and $\mb{D}^c$, it also satisfies $\phi_{\Omega^c}(\infty)=\infty$. Thus, \eqref{eq:limitquotient} implies that $\lim_{z\to\infty} h(z)\in\mb{C}$ and therefore $h$ is a holomorphic function on $\overline{\Omega}^c\cup\{\infty\}$. Furthermore, for $z\in\gamma$, we have $|\phi_{\Omega^c}(z)| = 1$ since $\phi_{\Omega^c}$ is a homeomorphism between $\gamma$ and $\partial\mb{D}$. Therefore, by the maximum principle,
\begin{equation}
\sup_{z\in\Omega^c} |h(z)| = \max_{z\in\gamma} |h(z)| = \max_{z\in\gamma}\frac{|p(z)|}{|\phi_{\Omega^c}(z)|^d} = \|p\|_{\ms{C}(\gamma)} = \|p\|_{\ms{C}(\overline{\Omega})},
\end{equation}
which implies that for $z\in\Omega^c$, we have
\begin{equation} \label{eq:usedconformalout}
|p(z)| \leq |\phi_{\Omega^c}(z)|^d \|p\|_{\ms{C}(\overline{\Omega})}.
\end{equation}
Applying \eqref{eq:usedconformalout} for $z=e^t \in\Omega^c$, we deduce that the linear functional $\varphi_t$ satisfies
\begin{equation} \label{ineq2}
\forall \ p\in\mathrm{span}\{1,w,\ldots,w^d\}, \ \ \ |\varphi_t(p)| \leq \big|\phi_{\Omega^c}(e^t)\big|^d \|p\|_{\ms{C}(\overline{\Omega})}.
\end{equation}
Therefore, by the Hahn--Banach theorem and the Riesz representation theorem, there exists a complex measure $\rho_t$ on $\overline{\Omega}$ such that $\|\rho_t\|_{\ms{M}(\overline{\Omega})}\leq |\phi_{\Omega^c}(e^t)|^d$ and
\begin{equation} \label{ide2}
p\in\mathrm{span}\{1,w,\ldots,w^d\} \ \ \ \Longrightarrow \ \ \ \int_{\overline{\Omega}} p(w)\diff\rho_t(w) = p\big(e^t\big).
\end{equation}
Since $\widehat{f}(A)=0$ when $|A|>d$, we get that
\begin{equation}
\begin{split}
\|e^{t\Delta}f\|_{L_p(\{-1,1\}^n;X)}& \stackrel{\eqref{ide2}}{=} \Big\| \int_{\overline{\Omega}} w^\Delta f \diff\rho_t(w)\Big\|_{L_p(\{-1,1\}^n;X)} \leq \int_{\overline{\Omega}} \|w^\Delta f\|_{L_p(\{-1,1\}^n;X)} \diff|\rho_t|(w)
\\ & \stackrel{\eqref{eq:assumppropKconv0}}{\leq} K \int_{\overline{\Omega}} \|f\|_{L_p(\{-1,1\}^n;X)} \diff|\rho_t|(w) = K\|\rho_t\|_{\ms{M}(\overline{\Omega})} \|f\|_{L_p(\{-1,1\}^n;X)}
\\ & \stackrel{\eqref{ineq2}}{\leq} K\big| \phi_{\Omega^c}\big(e^{t}\big)\big|^d \|f\|_{L_p(\{-1,1\}^n;X)},
\end{split}
\end{equation}
which is equivalent to \eqref{eq:reverseheatconformal}.
\end{proof}

\noindent {\it Proof of Theorem \ref{thm:reverseheatKconvex}.} Fix $n\in\N$, $d\in\{1,\ldots,n\}$, $p\in(1,\infty)$ and let $f:\{-1,1\}^n\to X$ be a function of degree at most $d$. By Theorem \ref{thm:reverseheatgeneral}, for every $t\geq0$,
\begin{equation}
\|e^{-t\Delta}f\|_{L_p(\{-1,1\}^n;X)} \geq \frac{1}{T_d(e^t)} \|f\|_{L_p(\{-1,1\}^n;X)}
\end{equation}
and, by \eqref{eq:chebyshevidentity}, for $t\geq1$,
\begin{equation}
T_d(e^t) \stackrel{\eqref{eq:chebyshevidentity}}{\leq} (2e^t)^d \leq e^{2td}.
\end{equation}
We will now treat the range $t\leq1$. By Corollary \ref{cor:lens}, there exists $r=r(p,X)\in[1,\infty)$ such that
\begin{equation}
\sup_{w\in\Omega(r)} \sup_{n\in\N} \|w^\Delta\|_{{L_p(\{-1,1\}^n;X)}\to {L_p(\{-1,1\}^n;X)}} \leq K,
\end{equation}
where $\Omega(r)$ is the domain \eqref{eq:Omega(r)def}. Therefore, by Proposition \ref{prop:reverseheatcrucial}, we have
\begin{equation}
\forall \ t\geq0, \ \ \ \|e^{-t\Delta}f\|_{L_p(\{-1,1\}^n;X)} \geq \frac{1}{K\big|\phi_{\Omega(r)^c}\big(e^{t}\big)\big|^d} \|f\|_{L_p(\{-1,1\}^n;X)}.
\end{equation}
To conclude the proof, we will show that there exists $C=C(p,X)$ such that
\begin{equation} \label{eq:boundconformalbypower2}
\forall \ 0\leq t\leq 1, \ \ \ \big|\phi_{\Omega(r)^c}\big(e^{t}\big)\big| \leq e^{C t^{\pi/(2\pi-\theta(r))}},
\end{equation}
where $\theta(r) = 2\arcsin(1/r)\in(0,\pi]$ which would then imply the conclusion of Theorem \ref{thm:reverseheatKconvex} with $\eta(p,X) = \frac{\pi}{2\pi-\theta(r(p,X))} \in \big(\frac{1}{2},1\big]$. By Lemma \ref{lem:computeconformal}, if $\theta=\theta(r)$, we have
\begin{equation} \label{eq:phiomegacr}
\phi_{\Omega(r)^c}\big(e^t\big) = \frac{(e^t+1)^{\frac{\pi}{2\pi-\theta}}+(e^t-1)^{\frac{\pi}{2\pi-\theta}}}{(e^t+1)^{\frac{\pi}{2\pi-\theta}}-(e^t-1)^{\frac{\pi}{2\pi-\theta}}}.
\end{equation}
Therefore,
\begin{equation}
\log \big|\phi_{\Omega(r)^c}\big(e^t\big)\big| = \log\left(1 + \frac{2(e^t-1)^{\frac{\pi}{2\pi-\theta}}}{(e^t+1)^{\frac{\pi}{2\pi-\theta}}-(e^t-1)^{\frac{\pi}{2\pi-\theta}}} \right) \leq \frac{2(e^t-1)^{\frac{\pi}{2\pi-\theta}}}{(e^t+1)^{\frac{\pi}{2\pi-\theta}}-(e^t-1)^{\frac{\pi}{2\pi-\theta}}}
\end{equation}
and for $t\leq1$,
\begin{equation}
\frac{2(e^t-1)^{\frac{\pi}{2\pi-\theta}}}{(e^t+1)^{\frac{\pi}{2\pi-\theta}}-(e^t-1)^{\frac{\pi}{2\pi-\theta}}} \leq \frac{2(e^t-1)^{\frac{\pi}{2\pi-\theta}}}{(e+1)^{\frac{\pi}{2\pi-\theta}}-(e-1)^{\frac{\pi}{2\pi-\theta}}} \leq \frac{2(e-1)^{\frac{\pi}{2\pi-\theta}}}{(e+1)^{\frac{\pi}{2\pi-\theta}}-(e-1)^{\frac{\pi}{2\pi-\theta}}} t^{\frac{\pi}{2\pi-\theta}},
\end{equation}
which implies \eqref{eq:boundconformalbypower2} with $C= \frac{2(e-1)^{\frac{\pi}{2\pi-\theta}}}{(e+1)^{\frac{\pi}{2\pi-\theta}}-(e-1)^{\frac{\pi}{2\pi-\theta}}}$.
\hfill$\Box$

\medskip

An argument identical to the one used in the proof of Corollary \ref{cor:realchaos} implies the following improved moment comparison of low degree functions with values in a $K$-convex space $(X,\|\cdot\|_X)$.

\begin{corollary} \label{cor:momentcompKconvex}
Let $(X,\|\cdot\|_X)$ be a $K$-convex Banach space. For every $p>q>1$, there exist $C=C(p,X)\in(0,\infty)$ and $\eta=\eta(p,X)\in\big(\frac{1}{2},1\big]$ such that for every $n,d\in\N$ with $d\in\{1,\ldots,n\}$ and every function $f:\{-1,1\}^n\to X$ of degree at most $d$, we have
\begin{equation}
\|f\|_{L_p(\{-1,1\}^n;X)} \leq Ce^{Cd\max\big\{ \log\big(\frac{p-1}{q-1}\big), \log \big(\frac{p-1}{q-1} \big)^\eta \big\}}\|f\|_{L_q(\{-1,1\}^n;X)}.
\end{equation}
\end{corollary}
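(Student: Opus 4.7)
The strategy is a direct analogue of the derivation of Corollary \ref{cor:realchaos}, where Theorem \ref{thm:reverseheatgeneral} is replaced by its $K$-convex refinement, Theorem \ref{thm:reverseheatKconvex}. The two inputs are, on the one hand, Bonami's hypercontractive inequality \eqref{eq:bonami}, which states that for every vector valued function $f:\{-1,1\}^n\to X$ and every $t\ge \tfrac{1}{2}\log\big(\tfrac{p-1}{q-1}\big)$ one has
\[
\|e^{-t\Delta}f\|_{L_p(\{-1,1\}^n;X)} \leq \|f\|_{L_q(\{-1,1\}^n;X)},
\]
and, on the other hand, the lower bound from Theorem \ref{thm:reverseheatKconvex}, which for $f$ of degree at most $d$ asserts the existence of $c=c(p,X)$, $C=C(p,X)$ and $\eta=\eta(p,X)\in\bigl(\tfrac12,1\bigr]$ such that
\[
\|e^{-t\Delta}f\|_{L_p(\{-1,1\}^n;X)} \geq c\,e^{-Cd\max\{t,t^{\eta}\}}\,\|f\|_{L_p(\{-1,1\}^n;X)}.
\]

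Combining these two bounds exactly as in \eqref{eq:combinehyperandthm}, I obtain
\[
\|f\|_{L_p(\{-1,1\}^n;X)} \leq \tfrac{1}{c}\,e^{Cd\max\{t,t^{\eta}\}}\,\|f\|_{L_q(\{-1,1\}^n;X)}
\]
for every $t\ge \tfrac{1}{2}\log\big(\tfrac{p-1}{q-1}\big)$. Choosing $t=t_0\eqdef \tfrac{1}{2}\log\big(\tfrac{p-1}{q-1}\big)$ yields the desired inequality, since
\[
\max\{t_0,t_0^{\eta}\} \leq \max\Bigl\{\log\bigl(\tfrac{p-1}{q-1}\bigr),\log\bigl(\tfrac{p-1}{q-1}\bigr)^{\eta}\Bigr\},
\]
up to a harmless constant factor that can be absorbed into $C(p,X)$ (together with the leading $1/c$, which contributes a bounded prefactor matching the $C$ standing in front of the exponential in the statement).

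There is essentially no obstacle here: once Theorem \ref{thm:reverseheatKconvex} is available, the proof is the same ``hypercontractivity meets reverse heat decay'' calculation as in Corollary \ref{cor:realchaos}, the only difference being that the Chebyshev factor $T_d(e^t)$ is replaced by the sharper $K$-convex expression $\tfrac{1}{c}e^{Cd\max\{t,t^{\eta}\}}$, which is why the resulting exponent interpolates between the two regimes $\log\big(\tfrac{p-1}{q-1}\big)$ and $\log\big(\tfrac{p-1}{q-1}\big)^{\eta}$.
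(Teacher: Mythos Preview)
Your proposal is correct and follows exactly the approach the paper indicates: the paper simply states that ``an argument identical to the one used in the proof of Corollary \ref{cor:realchaos}'' yields the result, and you have carried out precisely that argument, replacing the Chebyshev bound from Theorem \ref{thm:reverseheatgeneral} by the $K$-convex lower bound of Theorem \ref{thm:reverseheatKconvex} and plugging in $t=\tfrac{1}{2}\log\bigl(\tfrac{p-1}{q-1}\bigr)$.
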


We now proceed to prove Theorem \ref{thm:laplacianKconvex}, the improved Bernstein--Markov inequality for the Laplacian. For this, we will use the following \mbox{classical approximation theoretic result of Szeg\"o \cite{Sze25}.}

\begin{theorem} [Szeg\"o]
Let $\Omega \subseteq\C$ be a domain bounded by a Jordan curve $\gamma$ and fix $w\in\gamma$. Denote by $\theta(\Omega,w) \in [0,2\pi]$ the exterior angle of $\gamma$ at $w$. Then, there exists a constant $K(\Omega,w)\in(0,\infty)$ such that for every $d\in\N$ and every polynomial $p$ of degree at most $d$, we have
\begin{equation} \label{eq:szego}
|p'(w)| \leq K(\Omega,w) d^{\theta(\Omega,w)/\pi} \|p\|_{\ms{C}(\overline{\Omega})}.
\end{equation}
\end{theorem}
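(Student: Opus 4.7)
The plan is to deduce Szegő's inequality from Cauchy's integral formula applied on a small disk around $w$ whose radius is controlled by the geometry of the conformal map from $\overline{\Omega}^c$ to the exterior of $\overline{\mb{D}}$. The key geometric input is that the level curves of this conformal map approach $w$ only at the rate $(R-1)^{\theta/\pi}$ as $R\to 1^+$, so one can bound $|p'(w)|$ via Cauchy's estimate on a disk of radius $\sim d^{-\theta/\pi}$ on which $p$ grows only by a bounded multiplicative factor.

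First, I would invoke a conformal equivalence $\phi:\overline{\Omega}^c\cup\{\infty\}\to\overline{\mb{D}}^c\cup\{\infty\}$ with $\phi(\infty)=\infty$; by Caratheodory's theorem $\phi$ extends to a homeomorphism between $\gamma$ and $\partial\mb{D}$. Arguing exactly as in the proof of Proposition \ref{prop:reverseheatcrucial}, the auxiliary function $h(z)=p(z)/\phi(z)^d$ is holomorphic on $\overline{\Omega}^c\cup\{\infty\}$ and the maximum principle, combined with $|\phi|=1$ on $\gamma$, yields
\[|p(z)|\leq|\phi(z)|^d\,\|p\|_{\ms{C}(\overline{\Omega})}\qquad\forall\,z\in\overline{\Omega}^c.\]
For $R>1$ set $\Gamma_R=\phi^{-1}(\{|\zeta|=R\})$ and let $E_R$ be the compact region enclosed by $\Gamma_R$, so $\overline{\Omega}\subseteq E_R$. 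The display above evaluated on $\Gamma_R$ gives $|p|\leq R^d\|p\|_{\ms{C}(\overline{\Omega})}$ there, and hence the same bound holds throughout $E_R$ by the maximum principle applied to the polynomial $p$.

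Next, I would analyze the local behavior of $\phi^{-1}$ at $\phi(w)\in\partial\mb{D}$. Since $\gamma$ has exterior angle $\theta=\theta(\Omega,w)$ at $w$, a neighborhood of $w$ in $\overline{\Omega}^c$ is asymptotically a circular sector of opening $\theta$, whereas $\overline{\mb{D}}^c$ near $\phi(w)$ is a half-plane (opening $\pi$). Classical boundary behavior theory for conformal maps at corner points (Lichtenstein--Warschawski--Kellogg theory; see e.g.\ Pommerenke, \emph{Boundary behaviour of conformal maps}) supplies the asymptotic $\phi^{-1}(\zeta)-w\sim c(\Omega,w)(\zeta-\phi(w))^{\theta/\pi}$ as $\zeta\to\phi(w)$ within $\overline{\mb{D}}^c$, for some nonzero constant. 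Since $|\zeta-\phi(w)|\geq R-1$ whenever $|\zeta|=R$, this asymptotic combined with a compactness argument away from $\phi(w)$ produces constants $R_0=R_0(\Omega,w)>1$ and $\kappa=\kappa(\Omega,w)>0$ such that
\[\mathrm{dist}(w,\Gamma_R)\geq\kappa(R-1)^{\theta/\pi}\qquad\forall\,R\in(1,R_0].\]

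Finally, I would combine these ingredients via Cauchy's inequality. Choosing $R=1+1/d$ (so $R^d\leq e$, and $R\leq R_0$ for $d$ large enough, the bounded range of $d$ being absorbed into the constant), the disk $D(w,\kappa d^{-\theta/\pi})$ lies inside $E_R$, hence $|p|\leq e\|p\|_{\ms{C}(\overline{\Omega})}$ on this disk. Cauchy's estimate then gives $|p'(w)|\leq(e/\kappa)d^{\theta/\pi}\|p\|_{\ms{C}(\overline{\Omega})}$, which is \eqref{eq:szego} with $K(\Omega,w)=e/\kappa$ (enlarged to cover small $d$). The main obstacle in this scheme is precisely the asymptotic expansion of $\phi^{-1}$ at a boundary point of prescribed exterior angle: for piecewise smooth $\gamma$ this is classical Kellogg--Warschawski theory, while for a general Jordan curve possessing a well-defined exterior angle at $w$ one must appeal to the quantitative refinements due to Warschawski and Lehman. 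Granted this ingredient, the rest of the argument is a routine application of the maximum principle and Cauchy's formula.
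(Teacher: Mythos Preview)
The paper does not actually prove Szeg\H{o}'s theorem: it is stated as a classical result with a citation to \cite{Sze25}, followed only by the remark that a simple proof for the specific lens domains $\Omega(r)$ appears in \cite{EI18}. So there is no ``paper's own proof'' to compare against here.

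That said, your outline is the standard route to Szeg\H{o}'s inequality and is correct. The three ingredients are exactly the right ones: the Bernstein--Walsh type bound $|p(z)|\leq|\phi(z)|^d\|p\|_{\ms{C}(\overline{\Omega})}$ obtained via the maximum principle on $p/\phi^d$ (which the paper itself uses in Proposition~\ref{prop:reverseheatcrucial}), the Cauchy estimate on a disk of radius comparable to $\mathrm{dist}(w,\Gamma_{1+1/d})$, and the corner asymptotic $\phi^{-1}(\zeta)-w\asymp(\zeta-\phi(w))^{\theta/\pi}$ for the exterior conformal map. You have correctly identified the last of these as the only nontrivial analytic input, and your attribution to the Warschawski--Lehman circle of results is accurate. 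For the lens domains that the paper actually needs, this asymptotic is explicit (Lemma~\ref{lem:computeconformal} computes $\phi_{\Omega(r)^c}$ in closed form), which is presumably why \cite{EI18} can give an elementary proof with the absolute constant $10$ used in Theorem~\ref{thm:laplacianR}.
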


\noindent A simple proof of Szeg\"o's theorem specifically for lens domains of\mbox{ the form \eqref{eq:Omega(r)def} was given in \cite{EI18}.}

\medskip

\noindent {\it Proof of Theorem \ref{thm:laplacianKconvex}.} Fix $n\in\N$, $d\in\{1,\ldots,n\}$, $p\in(1,\infty)$ and let $f:\{-1,1\}^n\to X$ be a function of degree at most $d$. Also let $\theta=\theta(p,X)\in\big(0,\frac{\pi}{2}\big]$ and $M=M(p,X)\in[1,\infty)$ be given by Theorem \ref{thm:pisKconv}. Consider the domain $V = \{ e^{-z}: \ |\arg z|<\theta\} \subseteq\mb{D}$. Then, by Theorem \ref{thm:pisKconv}, we have
\begin{equation} \label{eq:usecomplexdomain}
\forall \ w\in V, \ \ \ \|w^\Delta\|_{{L_p(\{-1,1\}^n;X)}\to {L_p(\{-1,1\}^n;X)}} \leq M.
\end{equation}
As in the proof of Theorem \ref{thm:laplaciangeneral}, consider the linear functional $\psi:\big(\mathrm{span}\{1,w,\ldots,w^d\}, \|\cdot\|_{\ms{C}(\overline{V})}\big)\to \C$ given by
\begin{equation}
\psi\Big( \sum_{k=0}^d a_k w^k\Big) \eqdef \sum_{k=0}^d ka_k,
\end{equation}
or $\psi(p) = p'(1)$ when $p$ is a polynomial with $\mathrm{deg}(p)\leq d$. Notice that $V$ is a   domain bounded by a Jordan curve $\gamma$ with $1\in\gamma$ which forms an exterior angle $2\pi-2\theta$ at 1. Then, Szeg\"o's inequality \eqref{eq:szego} implies that there exists some $K=K(p,X)\in(0,\infty)$ such that
\begin{equation} \label{ineq3}
\forall \ p\in\mathrm{span}\{1,w,\ldots,w^d\}, \ \ \ |\psi(p)| \leq K d^{2-\frac{2\theta}{\pi}} \|p\|_{\ms{C}(\overline{V})}.
\end{equation}
Therefore, by the Hahn--Banach theorem and the Riesz representation theorem, there exists a complex measure $\nu$ on $\overline{V}$ such that $\|\nu\|_{\ms{M}(\overline{V})} \leq K d^{2-\frac{2\theta}{\pi}}$ such that for every polynomial $p$, we have
\begin{equation} \label{ide3}
\mathrm{deg}(p)\leq d \ \ \ \Longrightarrow \ \ \ \int_{\overline{V}} p(w)\diff\nu(w) = p'(1).
\end{equation}
Since $\widehat{f}(A)=0$ when $|A|>d$, we get that
\begin{equation}
\begin{split}
 \|\Delta f\|_{L_p(\{-1,1\}^n;X)}  \stackrel{\eqref{ide3}}{=} \Big\|& \int_{\overline{V}} w^\Delta f \diff\nu(w)\Big\|_{L_p(\{-1,1\}^n;X)} \leq \int_{\overline{V}} \|w^\Delta f\|_{L_p(\{-1,1\}^n;X)} \diff|\nu|(w) 
\\ & \stackrel{\eqref{eq:conclusionKconv}}{\leq} M \int_{\overline{V}} \|f\|_{L_p(\{-1,1\}^n;X)} \diff|\nu|(w) \stackrel{\eqref{ineq3}}{\leq} KM d^{2-\frac{2\theta}{\pi}} \|f\|_{L_p(\{-1,1\}^n;X)},
\end{split}
\end{equation}
which concludes the proof of the theorem with $\alpha(p,X) = 2-\frac{2\theta(p,X)}{\pi} \in [1,2)$.
\hfill$\Box$

\begin{remark} \label{rem:laplacianimpliesreverseheat}
It is straightfoward to see that if the Bernstein--Markov inequality \eqref{eq:laplaciangeneral} holds true with linear dependence on the degree for a given Banach space, then the asymptotically optimal lower bound for the action of the heat semigroup conjectured after Theorem \ref{thm:reverseheatKconvex} follows. Indeed, assume that for a Banach space $(X,\|\cdot\|_X)$ and $p\in[1,\infty)$ there exists $C\in(0,\infty)$ such that for every $n,d\in\N$ with $d\in\{1,\ldots,n\}$ and every function $f:\{-1,1\}^n\to X$ of degree at most $d$, we have
\begin{equation} \label{eq:assumptioninremark}
 \|\Delta f\|_{L_p(\{-1,1\}^n;X)}  \leq Cd  \|f\|_{L_p(\{-1,1\}^n;X)}.
\end{equation}
Then, for every $t\geq0$, we get
\begin{equation} \label{eq:franc}
\begin{split}
\|e^{t\Delta} f\|_{L_p(\{-1,1\}^n;X)} \leq \sum_{k=0}^\infty \frac{\|(t\Delta)^k f\|_{L_p(\{-1,1\}^n;X)}}{k!} \stackrel{\eqref{eq:assumptioninremark}}{\leq} \sum_{k=0}^\infty \frac{(Ctd)^k}{k!} & \|f\|_{L_p(\{-1,1\}^n;X)} \\ & = e^{Ctd}\|f\|_{L_p(\{-1,1\}^n;X)},
\end{split}
\end{equation}
which, applied to $e^{-t\Delta}f$, is equivalent to the conjectured optimal version of Theorem \ref{thm:reverseheatKconvex}.
\end{remark}

The Bernstein--Markov inequality \eqref{eq:gradientKconvex} for the vector valued gradient is an immediate consequence of Theorem \ref{thm:laplacianKconvex} combined with the following dual to Pisier's inequality \eqref{eq:Pisierlowfreq} for low degree functions.

\begin{proposition}
Let $(X,\|\cdot\|_X)$ be a $K$-convex Banach space. For every $p\in(1,\infty)$ there exists $B(p,X)\in(0,\infty)$ such that for every $n,d\in\N$ with $d\in\{1,\ldots,n\}$, every function $g:\{-1,1\}^n\to X$ of degree at most $d$ satisfies
\begin{equation} \label{eq:dualpisier}
\Big(\frac{1}{2^n}\sum_{\delta\in\{-1,1\}^n} \Big\| \sum_{i=1}^n\delta_i\partial_i g\Big\|^p_{L_p(\{-1,1\}^n;X)} \Big)^{1/p} \leq B(p,X)(\log d+1) \|\Delta g\|_{L_p(\{-1,1\}^n;X)}.
\end{equation}
\end{proposition}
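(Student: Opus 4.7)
The plan is to carry out the dual of the proof of Pisier's inequality for low-degree functions (Theorem~\ref{thm:Pisierlowfreq}). Assume without loss of generality that $g$ is mean zero. Fix $s>0$ and let $h_s^*:\{-1,1\}^n\times\{-1,1\}^n\to X^*$ be a normalizing functional of $\sum_{i=1}^n \delta_i\partial_i e^{-s\Delta}g$ in $L_p(\{-1,1\}^n\times\{-1,1\}^n;X)$, so that $\|h_s^*\|_{L_q(\{-1,1\}^n\times\{-1,1\}^n;X^*)}=1$ with $\frac{1}{p}+\frac{1}{q}=1$. Because $\sum_i\delta_i\partial_i e^{-s\Delta}g$ is first order in $\delta$, only the coefficients $\tilde h_i^*(\e)\eqdef \frac{1}{2^n}\sum_\delta \delta_i h_s^*(\e,\delta)$ contribute to the defining pairing; combining this with the self-adjointness of $\partial_i$ and $e^{-s\Delta}$, the identity $g=\Delta^{-1}\Delta g$, and the formula $\Delta^{-1}e^{-s\Delta}=\int_s^\infty e^{-t\Delta}\,\diff t$, one rewrites
\begin{equation*}
\Big\|e^{-s\Delta}\sum_i\delta_i\partial_i g\Big\|_{L_p(\{-1,1\}^n\times\{-1,1\}^n;X)} = \frac{1}{2^n}\sum_\e \Big\langle \int_s^\infty \sum_i \partial_i e^{-t\Delta}\tilde h_i^*(\e)\,\diff t,\,\Delta g(\e)\Big\rangle.
\end{equation*}

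The analytic core of the argument is the inequality
\begin{equation*}
\forall\ t>0,\ \forall\ \psi:\{-1,1\}^n\to X,\ \ \Big\|\sum_i \delta_i\partial_i e^{-t\Delta}\psi\Big\|_{L_p(\{-1,1\}^n\times\{-1,1\}^n;X)} \leq \frac{M(p,X)}{e^t-1}\|\psi\|_{L_p(\{-1,1\}^n;X)},
\end{equation*}
where $M(p,X)$ is the norm of the Rademacher projection on $L_p(\{-1,1\}^n;X)$, finite since $X$ is $K$-convex. To see this I would introduce the auxiliary two-variable function $G_t(\e,\delta)\eqdef\sum_S\widehat\psi(S)\prod_{i\in S}(e^{-t}\e_i+(1-e^{-t})\delta_i)$: its $L_p(\{-1,1\}^n\times\{-1,1\}^n;X)$ norm is bounded by $\|\psi\|_{L_p(\{-1,1\}^n;X)}$ via the convex representation underlying \eqref{eq:heatrepresent}; its first-level Rademacher projection in $\delta$ equals $(e^t-1)\sum_i\e_i\delta_i\partial_i e^{-t\Delta}\psi(\e)$; and $K$-convexity bounds this projection by $M(p,X)\|G_t\|_{L_p}$. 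Finally the measure-preserving change of variables $\delta\mapsto\e\cdot\delta$ on $\{-1,1\}^n$ (for each fixed $\e$) removes the $\e_i$ factors without altering $L_p$ norms. Dualising the resulting operator bound yields $\|\sum_i\partial_i e^{-t\Delta}\tilde h_i^*\|_{L_q(\{-1,1\}^n;X^*)}\leq M(p,X)/(e^t-1)$, and applying H\"older's and Minkowski's inequalities to the previous display produces
\begin{equation*}
\Big\|e^{-s\Delta}\sum_i\delta_i\partial_i g\Big\|_{L_p}\leq M(p,X) \log\Big(\frac{e^s}{e^s-1}\Big) \|\Delta g\|_{L_p}.
\end{equation*}

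To conclude, for each fixed $\delta\in\{-1,1\}^n$ the function $\e\mapsto \sum_i\delta_i\partial_i g(\e)$ has degree at most $d$, so Theorem~\ref{thm:reverseheatgeneral} applied pointwise in $\delta$ gives $\|\sum_i\delta_i\partial_i g\|_{L_p}\leq T_d(e^s)\|e^{-s\Delta}\sum_i\delta_i\partial_i g\|_{L_p}$. Combining with the estimate above and choosing $s=1/d^2$ so that $T_d(e^s)\leq e^{3d\max\{s,\sqrt s\}}= e^3$ by \eqref{eq:chsqrt} while $\log(e^s/(e^s-1))=O(\log d)$, one obtains $\|\sum_i\delta_i\partial_i g\|_{L_p}\leq B(p,X)(\log d+1)\|\Delta g\|_{L_p}$ for a suitable constant $B(p,X)$. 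The main step requiring care is the operator bound in the second paragraph, which is a quantitative manifestation of the analytic content of $K$-convexity underlying Pisier's original argument \cite{Pis82}; the rest is bookkeeping parallel to the proof of Theorem~\ref{thm:Pisierlowfreq}.
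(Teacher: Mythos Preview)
Your proof is correct and follows essentially the same approach as the paper: the key analytic input---the two-variable function $G_t$ together with $K$-convexity to bound the Rademacher projection in $\delta$, yielding $\big\|\sum_i\delta_i\partial_i e^{-t\Delta}\psi\big\|_{L_p}\leq \frac{M(p,X)}{e^t-1}\|\psi\|_{L_p}$---is exactly what the paper uses, followed by integration in $t$ and an application of Theorem~\ref{thm:reverseheatgeneral}. The only difference is cosmetic: you route the argument through a normalizing functional (mirroring the proof of Theorem~\ref{thm:Pisierlowfreq}) and then dualize the operator bound, whereas the paper applies the operator bound directly to $g$ and integrates to control $\big\|\sum_i\delta_i\partial_i\Delta^{-1}e^{-s\Delta}g\big\|_{L_p}$, avoiding the normalizing functional altogether.
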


\begin{proof}
Following the notation \eqref{eq:takeradproj} of the proof of Theorem \ref{thm:Pisierlowfreq}, for $t\geq0$, consider the function $g_t:\{-1,1\}^n\times\{-1,1\}^n\to X$ given by
\begin{equation} \label{xanarad}
\begin{split}
g_t(\e,\delta) & \eqdef \sum_{A\subseteq\{1,\ldots,n\}} \widehat{g}(A)  \prod_{i\in A} \big( e^{-t} \e_i + (1-e^{-t}) \delta_i\big)
\\ & = e^{-t\Delta} g(\e) + (e^t-1) \sum_{i=1}^n \e_i\delta_i\partial_i e^{-t\Delta}g(\e) + \Phi_{t}(\e,\delta),
\end{split}
\end{equation}
where $\sum_{\delta\in\{-1,1\}^n} \delta_i \Phi_{t}(\e,\delta)=0$ for every $\e\in\{-1,1\}^n$ and $i\in\{1,\ldots,n\}$. Therefore, for every $t>0$, we have
\begin{equation} \label{eq:dualpisier1}
\begin{split}
\Big(\frac{1}{2^n}\sum_{\delta\in\{-1,1\}^n} \Big\| \sum_{i=1}^n\delta_i\partial_i e^{-t\Delta} g&\Big\|^p_{L_p(\{-1,1\}^n;X)} \Big)^{1/p} 
 \stackrel{\eqref{xanarad}}{=} \frac{1}{e^t-1} \big\| \msf{Rad}_\delta g_t \big\|_{L_p(\{-1,1\}^n\times\{-1,1\}^n;X)}
\\ & \leq \frac{K}{e^t-1} \|g_t\|_{L_p(\{-1,1\}^n\times\{-1,1\}^n;X)} \stackrel{\eqref{eq:twovarcontract2}}{\leq} \frac{K}{e^t-1} \|g\|_{L_p(\{-1,1\}^n;X)},
\end{split}
\end{equation}
where $\msf{Rad}_\delta g_t$ is the Rademacher projection of $g_t$ with respect to the variable $\delta\in\{-1,1\}^n$, $K=K(p,X) = \sup_{n\in\N} \|\msf{Rad}\|_{L_p(\{-1,1\}^n;X)\to L_p(\{-1,1\}^n;X)} < \infty$ and the proof of the last inequality is identical to the proof of \eqref{eq:twovarcontract2} via \eqref{eq:twovarcontract1}. Integrating the above inequality, we deduce that for $s>0$,
\begin{equation} \label{eq:dualpisier1.5}
\begin{split}
\Big(\frac{1}{2^n}\sum_{\delta\in\{-1,1\}^n} \Big\| &\sum_{i=1}^n\delta_i\partial_i \Delta^{-1} e^{-s\Delta} g\Big\|_{L_p(\{-1,1\}^n;X)}^p \Big)^{1/p} 
\\ & = \Big(\frac{1}{2^n}\sum_{\delta\in\{-1,1\}^n} \Big\| \int_s^\infty \sum_{i=1}^n\delta_i\partial_i e^{-t\Delta} g \diff t\Big\|^p_{L_p(\{-1,1\}^n;X)} \Big)^{1/p} 
\\ & \stackrel{\eqref{eq:dualpisier1}}{\leq} K \Big(\int_s^\infty \frac{1}{e^t-1}\diff t\Big) \|g\|_{L_p(\{-1,1\}^n;X)} = K \log\Big(\frac{e^s}{e^s-1}\Big) \|g\|_{L_p(\{-1,1\}^n;X)}.
\end{split}
\end{equation}
By Theorem \ref{thm:reverseheatgeneral} and the elementary inequality $T_d(e^s)\leq e^{d^2s}$, where $s\geq0$, we conclude that for every $\delta\in\{-1,1\}^n$,
\begin{equation} \label{eq:dualpisier2}
\Big\| \sum_{i=1}^n\delta_i\partial_i \Delta^{-1} e^{-s\Delta} g\Big\|_{L_p(\{-1,1\}^n;X)} \geq e^{-d^2s} \Big\| \sum_{i=1}^n\delta_i\partial_i \Delta^{-1} g\Big\|_{L_p(\{-1,1\}^n;X)}.
\end{equation}
Therefore, combining \eqref{eq:dualpisier1.5} and \eqref{eq:dualpisier2},
\begin{equation}
\begin{split}
\Big(\frac{1}{2^n}\sum_{\delta\in\{-1,1\}^n} \Big\| \sum_{i=1}^n\delta_i\partial_i\Delta^{-1} g\Big\|_{L_p(\{-1,1\}^n;X)}^p \Big)^{1/p} \leq K \min_{s\geq0}& e^{d^2s}\log\Big(\frac{e^s}{e^s-1}\Big)  \|g\|_{L_p(\{-1,1\}^n;X)}
\\ & \leq 3K(\log d+1) \|g\|_{L_p(\{-1,1\}^n;X)},
\end{split}
\end{equation}
which is equivalent to the desired inequality \eqref{eq:dualpisier} with $B(p,X)=3K(p,X)$.
\end{proof}

\begin{remark}
It has been shown in \cite{NS02} that the validity of \eqref{eq:dualpisier} with the factor $C(\log d+1)$ replaced by a constant $C(p,X)$ depending only on $p$ and the Banach space $(X,\|\cdot\|_X)$, where $p\in(1,\infty)$, implies that $X$ is $K$-convex. Nevertheless, \eqref{eq:dualpisier} is the best known bound to date for general $K$-convex spaces, even when $d=n$. Under additional assumptions (e.g. when $X$ is a UMD$^+$ space or when $X$ is a $K$-convex Banach lattice), inequality \eqref{eq:dualpisier} is known to hold true with a constant $C(p,X)$ independent of the dimension $n$ for functions of arbitrary degree $d$, see \cite{HN13}.
\end{remark}

\noindent {\it Proof of Theorem \ref{thm:gradientKconvex}.} Fix $n\in\N$, $d\in\{1,\ldots,n\}$ and let $f:\{-1,1\}^n\to X$ be a function of degree at most $d$. Then, we have
\begin{equation}
\begin{split}
\Big(\frac{1}{2^n}\sum_{\delta\in\{-1,1\}^n} \Big\| \sum_{i=1}^n\delta_i\partial_i f\Big\|&^p_{L_p(\{-1,1\}^n;X)}  \Big)^{1/p} \stackrel{\eqref{eq:dualpisier}}{\leq} B(p,X)(\log d+1) \|\Delta f\|_{L_p(\{-1,1\}^n;X)}
\\ & \stackrel{\eqref{eq:laplacianKconvex}}{\leq} B(p,X) C(p,X) d^{\alpha(p,X)} (\log d+1)  \|f\|_{L_p(\{-1,1\}^n;X)},
\end{split}
\end{equation}
which completes the proof.
\hfill$\Box$

\medskip

Finally, we will prove the reverse Bernstein--Markov inequality of Theorem \ref{thm:reversebernsteinKconvex}. Recall that for $\gamma\in(0,\infty)$ and a function $f:\{-1,1\}^n\to X$, we denote by
\begin{equation}
\Delta^\gamma f \eqdef \sum_{A\subseteq\{1,\ldots,n\}} |A|^\gamma \widehat{f}(A) w_A
\end{equation}
the action of a fractional power of the hypercube Laplacian $\Delta$ on $f$. We will prove the following statement for $\Delta^{1/2}$ of which Theorem \ref{thm:reversebernsteinKconvex} is an immediate consequence.

\begin{theorem} \label{thm:delta1/2}
Let $(X,\|\cdot\|_X)$ be a $K$-convex Banach space. For every $p\in(1,\infty)$ there exists $c(p,X)\in(0,\infty)$ such that for every $n,d,m\in\N$ with $d+m\leq n$ and every function $f:\{-1,1\}^n\to X$ of degree at most $d+m$ which is also in the $d$-th tail space, we have
\begin{equation} \label{eq:delta1/2}
\|\Delta^{1/2} f\|_{L_p(\{-1,1\}^n;X)} \geq c(p,X) \sqrt{\frac{d}{m}} \|f\|_{L_p(\{-1,1\}^n;X)}.
\end{equation}
\end{theorem}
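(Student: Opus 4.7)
The plan is to run the duality scheme of Section~\ref{sec:3} for the operator $\Delta^{-1/2}$ on the finite-dimensional subspace of functions whose spectrum is contained in the window $\{d,d+1,\ldots,d+m\}$. By Pisier's $K$-convexity theorem and Corollary~\ref{cor:lens}, fix parameters $r=r(p,X)\in[1,\infty)$ and $K=K(p,X)\in[1,\infty)$ such that $\sup_{w\in\overline{\Omega(r)}}\|w^{\Delta}\|_{L_p(\{-1,1\}^n;X)\to L_p(\{-1,1\}^n;X)}\le K$. Arguing exactly as in Proposition~\ref{prop:crucialheat}, to establish~\eqref{eq:delta1/2} it will suffice to produce a complex measure $\mu=\mu_{d,m}$ on $\overline{\Omega(r)}$ satisfying the interpolation conditions $\int_{\overline{\Omega(r)}}w^k\,d\mu(w)=k^{-1/2}$ for every $k\in\{d,d+1,\ldots,d+m\}$, and of total variation $\|\mu\|_{\ms{M}(\overline{\Omega(r)})}\le C(p,X)\sqrt{m/d}$. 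Indeed, applying the associated operator $\int w^{\Delta}\,d\mu(w)$ to $g=\Delta^{1/2}f$ (which has the same spectrum as $f$, since $\Delta^{1/2}$ acts diagonally in the Walsh basis) returns $f$, and the uniform bound on $\|w^{\Delta}\|$ yields \eqref{eq:delta1/2} with $c(p,X)=\big(KC(p,X)\big)^{-1}$.

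By the Hahn--Banach theorem and the Riesz representation theorem, the infimum of $\|\mu\|_{\ms{M}(\overline{\Omega(r)})}$ over all measures satisfying these $m+1$ linear constraints equals the norm of the linear functional
\begin{equation*}
L\Big(\sum_{k=d}^{d+m}c_k w^k\Big)\eqdef\sum_{k=d}^{d+m}\frac{c_k}{\sqrt{k}}=\frac{1}{\sqrt{\pi}}\int_0^\infty t^{-1/2}P(e^{-t})\,dt,
\end{equation*}
where $P(w)=\sum_{k=d}^{d+m}c_kw^k$, acting on the M\"untz-type space $\mathrm{span}\{w^d,w^{d+1},\ldots,w^{d+m}\}$ endowed with the sup norm $\|\cdot\|_{\ms{C}(\overline{\Omega(r)})}$. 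The theorem is therefore reduced to the polynomial inequality
\begin{equation*}
\Big|\int_0^\infty t^{-1/2}P(e^{-t})\,dt\Big|\le C(p,X)\sqrt{m/d}\cdot\|P\|_{\ms{C}(\overline{\Omega(r)})},
\end{equation*}
uniformly over polynomials $P$ of the form $P(w)=w^dQ(w)$ with $\deg Q\le m$, and it is here that the upper bound $d+m$ on the spectrum must enter quantitatively (a bound of $1/\sqrt{d}$ alone, which would already follow from the Mendel--Naor decay via subordination applied to $\Delta^{-1/2}=\tfrac{1}{\sqrt{\pi}}\int_0^\infty t^{-1/2}e^{-t\Delta}dt$, does not use this cap).

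This polynomial estimate is where the recent inequality of Erd\'elyi from~\cite{Erd18}---the same one invoked in the proof of Theorem~\ref{thm:reversebernsteinKconvex}---is used, and it is the only substantive step. Its role is to convert the high-order zero of $P$ at the origin together with the cap $d+m$ on the degree into a sharp $L^{1}(t^{-1/2}dt)$-bound for $t\mapsto P(e^{-t})$, of order $\sqrt{m/d}\,\|P\|_{\ms{C}(\overline{\Omega(r)})}$. The principal obstacle in the proof is exactly the correct formulation and application of that M\"untz-type estimate on the lens domain $\overline{\Omega(r)}$; once it is in hand, the remaining ingredients---the duality reduction, the enlargement of the admissible contour from $[-1,1]$ to $\overline{\Omega(r)}$ supplied by Pisier's $K$-convexity theorem, and the passage from $\mu$ back to~\eqref{eq:delta1/2}---are routine within the framework already developed in the paper, and squaring then immediately recovers Theorem~\ref{thm:reversebernsteinKconvex}.
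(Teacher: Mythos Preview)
Your duality reduction is set up correctly, but the final polynomial inequality you isolate,
\[
\Big|\int_0^\infty t^{-1/2}P(e^{-t})\,dt\Big|\le C\sqrt{m/d}\,\|P\|_{\ms{C}(\overline{\Omega(r)})},
\]
is \emph{not} Erd\'elyi's theorem as stated in the paper, and you give no indication of how to deduce it from that theorem. Erd\'elyi's inequality (Theorem~\ref{thm:erd}) bounds $|P(1)|$ by $6\sqrt{m/d}\,\|\sqrt{1-x^2}\,P'(x)\|_{\ms{C}([0,1])}$: the right-hand side involves the \emph{derivative} of $P$ and the weight $\sqrt{1-x^2}$, not the sup of $P$ itself over the lens domain. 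If one tries to bound your integral using only the high-order zero of $P$ at $0$ via the conformal map $\phi_{\Omega(r)}$ (as in Proposition~\ref{prop:crucialheat}), one obtains $d^{-\theta(r)/(2\pi)}$ rather than $d^{-1/2}$, with no dependence on $m$; this is precisely the loss you yourself flag as coming from Mendel--Naor without the degree cap. So the step you call ``the only substantive step'' is in fact a genuine gap.

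The paper closes this gap differently. Instead of pushing the $K$-convexity into the domain $\overline{\Omega(r)}$, it uses Pisier's sector bound via the Cauchy integral formula to prove the operator estimate (Lemma~\ref{lem:cauchyformula})
\[
\|\Delta^{1/2}e^{-t\Delta}g\|_{L_p(\{-1,1\}^n;X)}\le \frac{K}{\sqrt{e^{2t}-1}}\|g\|_{L_p(\{-1,1\}^n;X)},
\]
which, applied to $g=\Delta^{1/2}f$ and rewritten with $x=e^{-t}$, reads
\[
\|\sqrt{1-x^2}\,\Delta x^{\Delta-1}f\|_{L_p(\{-1,1\}^n;X)}\le K\,\|\Delta^{1/2}f\|_{L_p(\{-1,1\}^n;X)}\qquad(x\in[0,1]).
\]
Now the duality runs on $[0,1]$, and the functional to be bounded sends $\sqrt{1-x^2}\,P'(x)$ to $P(1)$ --- \emph{exactly} the content of Erd\'elyi's theorem. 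In other words, the paper manufactures the derivative and the weight $\sqrt{1-x^2}$ on the operator side (this is where $K$-convexity does the work), so that the polynomial step matches Erd\'elyi's inequality verbatim. Your route absorbs $K$-convexity into the lens domain instead, but then demands a different M\"untz-type estimate which you have not established.
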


We start by proving the following lemma.

\begin{lemma} \label{lem:cauchyformula}
Let $(X,\|\cdot\|_X)$ be a $K$-convex Banach space. For every $p\in(1,\infty)$ there exists $K=K(p,X)\in(0,\infty)$ such that for every $n\in\N$ and every function $f:\{-1,1\}^n\to X$, we have
\begin{equation} \label{eq:cauchyformula}
\forall \ t\geq0, \ \ \  \|\Delta^{1/2} e^{-t\Delta}f\|_{L_p(\{-1,1\}^n;X)} \leq \frac{K}{\sqrt{e^{2t}-1}} \|f\|_{L_p(\{-1,1\}^n;X)}.
\end{equation}
\end{lemma}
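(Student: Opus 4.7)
\emph{Proof plan.} The approach is to apply the subordination identity $\sqrt{\lambda}=\tfrac{1}{\sqrt{\pi}}\int_0^\infty \lambda e^{-s\lambda}\,s^{-1/2}\,\diff s$ (valid for every $\lambda\geq 0$, by the substitution $u=s\lambda$ reducing it to $\Gamma(1/2)=\sqrt{\pi}$) spectrally at $\Delta$ and then multiply by $e^{-t\Delta}$ to obtain
\begin{equation*}
\Delta^{1/2} e^{-t\Delta} f = \frac{1}{\sqrt{\pi}} \int_0^\infty s^{-1/2}\, \Delta e^{-(t+s)\Delta} f \,\diff s.
\end{equation*}
It then suffices to control $\|\Delta e^{-u\Delta}\|_{L_p(\{-1,1\}^n;X)\to L_p(\{-1,1\}^n;X)}$ for $u>0$ and integrate.

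The plan for this control is to invoke Cauchy's integral formula applied to the operator-valued polynomial $w\mapsto w^\Delta=\sum_{k=0}^n w^k\,\msf{Rad}_k$, which is entire in $w$. By Corollary \ref{cor:lens} there exist $r=r(p,X)\geq 1$ and $K_0=K_0(p,X)\in(0,\infty)$ such that $\|w^\Delta\|_{L_p(\{-1,1\}^n;X)\to L_p(\{-1,1\}^n;X)}\leq K_0$ for every $w\in\Omega(r)$. A direct computation with the definition \eqref{eq:Omega(r)def} of the lens $\Omega(r)\subseteq \mb{D}$ gives, for $y\in[0,1)$,
\begin{equation*}
\dist\big(y,\partial\Omega(r)\big)=r-\sqrt{y^2+r^2-1}=\frac{1-y^2}{r+\sqrt{y^2+r^2-1}}\geq \frac{1-y}{2r}.
\end{equation*}
Applying Cauchy's formula on the disc of radius $(1-y)/(2r)$ centered at $y$ (contained in $\Omega(r)$) together with the identity $\Delta w^\Delta=w\,\frac{d}{dw}\big(w^\Delta\big)$, one obtains at $y=e^{-u}$ the sharpened estimate $\|\Delta e^{-u\Delta}\|_{L_p(\{-1,1\}^n;X)\to L_p(\{-1,1\}^n;X)} \leq 2rK_0/(e^u-1)$ for every $u>0$.

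Plugging this into the subordination formula, expanding $1/(e^{t+s}-1)=\sum_{k\geq 1}e^{-k(t+s)}$, and integrating termwise via $\int_0^\infty s^{-1/2}e^{-ks}\diff s=\sqrt{\pi/k}$ yields
\begin{equation*}
\|\Delta^{1/2} e^{-t\Delta} f\|_{L_p(\{-1,1\}^n;X)}\leq 2rK_0\sum_{k=1}^\infty \frac{e^{-kt}}{\sqrt{k}}\cdot \|f\|_{L_p(\{-1,1\}^n;X)}.
\end{equation*}
The proof is completed by the elementary observation that the function
\begin{equation*}
t\longmapsto\sqrt{e^{2t}-1}\sum_{k=1}^\infty \frac{e^{-kt}}{\sqrt{k}}=\sqrt{1-e^{-2t}}\sum_{j=0}^\infty \frac{e^{-jt}}{\sqrt{j+1}}
\end{equation*}
is continuous on $(0,\infty)$ with finite limits $\sqrt{2\pi}$ as $t\to 0^+$ (by comparison of the sum with $\int_0^\infty(x+1)^{-1/2}e^{-xt}\diff x\sim \sqrt{\pi/t}$) and $1$ as $t\to\infty$ (only the $j=0$ summand survives), hence is bounded above by some universal constant $C_0$.

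The delicate point, and the reason for stating the bound in terms of $\sqrt{e^{2t}-1}$, is to obtain exponential rather than polynomial decay as $t\to\infty$: the bound $\|\Delta e^{-u\Delta}\|\lesssim 1/u$ that would follow merely from boundedness of $w\mapsto w^\Delta$ on a sector (i.e.\ from analyticity of the semigroup alone) would yield only $1/\sqrt{t}$ decay in \eqref{eq:cauchyformula}, which is insufficient for large $t$. What carries the argument through is the refined estimate above, whose source is the fact that $\Omega(r)$ is a \emph{bounded} subdomain of $\mb{D}$ tangent to $\partial\mb{D}$ at $w=\pm 1$, which forces the admissible Cauchy-radius at $y=e^{-u}$ to decay like $e^{-u}$ as $u\to\infty$.
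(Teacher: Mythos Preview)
Your proof is correct, and it takes a genuinely different route from the paper's. The paper splits into two regimes: for $t\geq 2a$ it bounds $\|\Delta^{1/2}e^{-t\Delta}f\|$ by the crude triangle inequality $\sum_{k}\sqrt{k}\,e^{-tk}\|\msf{Rad}_kf\|$ together with the Rademacher projection bound \eqref{eq:boundRadk}; for $t<2a$ it applies Cauchy's formula in the $z$-variable on a disc inside the sector $\{|\arg z|\le\theta\}$ to get $\|\Delta e^{-t\Delta}\|\lesssim 1/t$, and then invokes the Naor--Schechtman interpolation inequality \eqref{eq:naos} with $\beta=\tfrac12$ to pass from $\Delta$ to $\Delta^{1/2}$.

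Your argument is more unified. By working with Cauchy's formula in the $w$-variable on the lens $\Omega(r)$ (rather than on the sector in $z$), you obtain the single bound $\|\Delta e^{-u\Delta}\|\le 2rK_0/(e^u-1)$ valid for \emph{all} $u>0$, which already encodes both the $1/u$ behavior at $u\to0$ and the $e^{-u}$ decay at $u\to\infty$. Combined with the subordination integral $\Delta^{1/2}=\pi^{-1/2}\int_0^\infty s^{-1/2}\Delta e^{-s\Delta}\,\diff s$, this removes the need for a case split and avoids the external interpolation lemma \eqref{eq:naos}. The trade-off is that you rely on Corollary~\ref{cor:lens} (the lens domain), whereas the paper's small-$t$ argument uses only the sector bound from Theorem~\ref{thm:pisKconv} directly; but since Corollary~\ref{cor:lens} is a cheap consequence of \eqref{eq:boundRadk} anyway, this is not a real cost.
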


\begin{proof}
By \eqref{eq:boundRadk}, there exist $M=M(p,X)\in(0,\infty)$ and $a=a(p,X)\in(0,\infty)$ such that for every $n\in\N$, $k\in\{1,\ldots,n\}$ and every function $f:\{-1,1\}^n\to X$, we have
\begin{equation} \label{eq:usenow}
\big\|\msf{Rad}_kf\big\|_{L_p(\{-1,1\}^n;X)} \leq M e^{ka} \|f\|_{L_p(\{-1,1\}^n;X)}.
\end{equation}
Therefore, for $t\geq 2a$,
\begin{equation}
\begin{split}
\|\Delta^{1/2}& e^{-t\Delta} f\|_{L_p(\{-1,1\}^n;X)}  \leq \sum_{k=1}^n \sqrt{k} e^{-tk}\big\|\msf{Rad}_kf\big\|_{L_p(\{-1,1\}^n;X)}
\\ & \stackrel{\eqref{eq:usenow}}{\leq} M \sum_{k=1}^n \sqrt{k} e^{-(t-a)k} \|f\|_{L_p(\{-1,1\}^n;X)} \leq \Big(Me^a \sum_{k=1}^\infty \sqrt{k} e^{-ak}\Big) e^{-t} \|f\|_{L_p(\{-1,1\}^n;X)},
\end{split}
\end{equation}
which implies that for $t\geq 2a$,
\begin{equation}
 \|\Delta^{1/2} e^{-t\Delta}f\|_{L_p(\{-1,1\}^n;X)} \leq \frac{K}{\sqrt{e^{2t}-1}} \|f\|_{L_p(\{-1,1\}^n;X)},
\end{equation}
where $K=Me^a \sum_{k=1}^\infty \sqrt{k} e^{-ak}$. To prove \eqref{eq:cauchyformula} for $t\in(0,2a)$, recall that by Pisier's $K$-convexity theorem, there exists $\theta=\theta(p,X)\in\big(0,\frac{\pi}{2}\big]$ such that
\begin{equation} \label{eq:pisagain}
|\arg z|\leq \theta \ \ \ \Longrightarrow \ \ \ \|e^{-z\Delta}\|_{{L_p(\{-1,1\}^n;X)}\to {L_p(\{-1,1\}^n;X)}} \leq M.
\end{equation}
Let $r = t\sin\theta$ and notice that the closed disc $\overline{D}(t,r)$ of radius $r$ centered at $t$ is contained in $\{z\in\C: \ |\arg z|\leq\theta\}$. By the Cauchy integral formula for the derivative,
\begin{equation} \label{eq:cauchywritten}
-\Delta e^{-t\Delta} = \frac{1}{2\pi i} \int_{\partial D(t,r)} \frac{e^{-\zeta \Delta}}{(\zeta-t)^2}\diff \zeta,
\end{equation}
hence
\begin{equation} \label{eq:boundDetD}
\begin{split}
\|\Delta e^{-t\Delta}f\|_{L_p(\{-1,1\}^n;X)} \stackrel{\eqref{eq:cauchywritten}}{\leq} \frac{1}{r} \sup_{\zeta \in \partial D(t,r)} \|e^{-\zeta\Delta}f\|_{L_p(\{-1,1\}^n;X)} \stackrel{\eqref{eq:pisagain}}{\leq} \frac{M}{t\sin\theta} \|f\|_{L_p(\{-1,1\}^n;X)}.
\end{split}
\end{equation}
Furthermore, by an inequality of Naor and Schechtman \cite[Lemma~5.6]{BELP08}, for every $\beta\in(0,1)$ and function $g:\{-1,1\}^n\to X$, we have
\begin{equation} \label{eq:naos}
\|\Delta^{\beta}g\|_{L_p(\{-1,1\}^n;X)} \leq 4 \|\Delta g\|_{L_p(\{-1,1\}^n;X)}^{\beta}\|g\|_{L_p(\{-1,1\}^n;X)}^{1-\beta}.
\end{equation}
Therefore, combining \eqref{eq:naos} for $g=e^{-t\Delta}f$ and $\beta=\frac{1}{2}$ with \eqref{eq:boundDetD} and the contractivity of the heat semigroup, we deduce that
\begin{equation}
\|\Delta^{1/2} e^{-t\Delta} f\|_{L_p(\{-1,1\}^n;X)} \leq \frac{4\sqrt{M}}{\sqrt{t}\sqrt{\sin\theta}}\|f\|_{L_p(\{-1,1\}^n;X)},
\end{equation}
which completes the proof of \eqref{eq:cauchyformula}, since for every $a\in(0,\infty)$, there exists $c_a\in(0,\infty)$ such that $\sqrt{e^{2t}-1} \leq c_a \sqrt{t}$ for $t\in(0,2a)$.
\end{proof}

In the proof of Theorem \ref{thm:delta1/2}, we will use a reverse Bernstein inequality for incomplete polynomials, proven recently by Erd\'elyi \cite{Erd18}.

\begin{theorem} [Erd\'elyi] \label{thm:erd}
Fix $d,m\in\N$ and let $P(x)$ be a polynomial of the form 
\begin{equation}
P(x) = a_d x^d + a_{d+1}x^{d+1} +\cdots + a_{d+m} x^{d+m}.
\end{equation}
Then,
\begin{equation} \label{eq:erdelyi}
|P(1)| \leq 6 \sqrt{\frac{m}{d}} \|\sqrt{1-x^2}P'(x)\|_{\ms{C}([0,1])}.
\end{equation}
Furthermore, the estimate is sharp up to the value of the universal constant.
\end{theorem}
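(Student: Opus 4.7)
The plan is to pass to the trigonometric side via the substitution $x = \cos\theta$. Setting $R(\theta) \eqdef P(\cos\theta)$ for $\theta \in [0,\pi/2]$, one has $R(0) = P(1)$, $|R'(\theta)| = \sqrt{1-x^2}\,|P'(x)|$ at $x=\cos\theta$, and the factorization $P(x) = x^d Q(x)$ with $\deg Q \le m$ gives $R(\theta) = \cos^d(\theta)\,Q(\cos\theta)$, so that $R$ vanishes to order $d$ at $\theta=\pi/2$. The target inequality becomes
\begin{equation*}
|R(0)| \;\le\; 6\sqrt{m/d}\,\|R'\|_{\ms{C}([0,\pi/2])}.
\end{equation*}

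The structural intuition guiding the proof is that two length scales coexist in $R$: the factor $\cos^d(\theta) \approx e^{-d\theta^2/2}$ localizes $R$ near $\theta=0$ at the Gaussian scale $1/\sqrt{d}$, while $Q(\cos\theta)$ is a cosine polynomial of degree at most $m$ with intrinsic oscillation scale $1/m$. The factor $\sqrt{m/d}$ in the desired bound should arise as the size of the smallest arc on which $R$ cannot be dismissed by $\cos^d(\theta)$-decay alone while still seeing the polynomial structure of $Q$.

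I would then attempt an integration-by-parts/duality strategy: since $R(\pi/2)=0$, for any smooth cutoff $\chi$ on $[0,\pi/2]$ with $\chi(0)=1$ and $\chi(\pi/2)=0$, one has
\begin{equation*}
R(0) \;=\; \int_0^{\pi/2}R(\theta)\chi'(\theta)\,d\theta \;-\; \int_0^{\pi/2}R'(\theta)\chi(\theta)\,d\theta.
\end{equation*}
If $\chi$ can be chosen concentrated on $[0,\theta_0]$ with $\theta_0 \asymp \sqrt{m/d}$ and $\int |\chi| \le C\sqrt{m/d}$, the second term is controlled by $C\sqrt{m/d}\,\|R'\|_{\ms{C}([0,\pi/2])}$. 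The natural candidate for $\chi$ is a rescaled Chebyshev polynomial of degree $m$ on the arc $[\cos\theta_0, 1]$, dictated by the Chebyshev-type extremal problem on $\mathrm{span}\{x^d,\ldots,x^{d+m}\}$.

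The main obstacle will be controlling $\int_0^{\theta_0}|R(\theta)|\,|\chi'(\theta)|\,d\theta$ without reintroducing the quantity $|R(0)|$ one is trying to bound. This requires a Markov-type estimate for the degree-$m$ polynomial $Q$ on the short arc $[\cos\theta_0, 1]$ (once the decaying factor $\cos^d(\theta)$ is factored out), producing a matching factor of $\sqrt{d/m}$ that cancels to close the loop. This closure step is presumably the crux of Erd\'elyi's argument in \cite{Erd18}. Sharpness up to the absolute constant would then follow by testing the inequality against the extremal polynomial obtained from this analysis, which one expects to be (essentially) a product of $x^d$ and a shifted Chebyshev polynomial of degree $m$ on an interval of the form $[1-cm/d,\,1]$.
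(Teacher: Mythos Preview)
The paper does not contain a proof of this theorem: it is quoted as an external result of Erd\'elyi \cite{Erd18}, with the explicit acknowledgement that Erd\'elyi proved it upon the authors' request. So there is no ``paper's own proof'' to compare against; the statement is used as a black box in the proof of Theorem~\ref{thm:delta1/2}.

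As for your proposal itself: it is a heuristic outline rather than a proof, and you say so yourself. The substitution $x=\cos\theta$ and the identification of the two scales $1/\sqrt{d}$ and $1/m$ are natural first moves, and the expected extremizer (roughly $x^d$ times a Chebyshev polynomial of degree $m$ on an interval of length $\asymp m/d$ near $1$) is correct. But the integration-by-parts scheme, as you have written it, does not close: after splitting
\[
R(0) \;=\; -\int_0^{\pi/2} R'(\theta)\chi(\theta)\,d\theta \;-\; \int_0^{\pi/2} R(\theta)\chi'(\theta)\,d\theta,
\]
the second integral involves $R$ again, and your proposed remedy---a Markov-type estimate for $Q$ on the short arc producing a factor $\sqrt{d/m}$---would in fact go the wrong way. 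On an interval of length $\asymp m/d$, Markov's inequality for a degree-$m$ polynomial gives $\|Q'\|\lesssim m\cdot(d/m)\|Q\| = d\|Q\|$, which after multiplying by $\theta_0\asymp\sqrt{m/d}$ and integrating leaves you with a term comparable to $|R(0)|$ with no small prefactor to absorb. So the ``closure step'' you flag as the crux is not a detail to be filled in but the entire content of the theorem; the cutoff-and-integrate skeleton does not supply it. You would need a genuinely different mechanism---for instance, a direct analysis of the extremal problem $\sup\{|P(1)|:\ P\in\mathrm{span}\{x^d,\ldots,x^{d+m}\},\ \|\sqrt{1-x^2}P'\|_{\ms{C}([0,1])}\le 1\}$ via orthogonal-polynomial or potential-theoretic tools, which is presumably what \cite{Erd18} does.
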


\noindent {\it Proof of Theorem \ref{thm:delta1/2}.}
Fix $n,d,m\in\N$ with $d+m\leq n$ and a function $f:\{-1,1\}^n\to X$ of degree at most $d+m$ which is also in the $d$-th tail space. Writing $x=e^{-t}\in[0,1]$,\mbox{ \eqref{eq:cauchyformula} can be rewritten as}
\begin{equation} \label{eq:rewritecauchy}
\forall \ x\in[0,1], \ \ \ \|\sqrt{1-x^2}\Delta x^{\Delta-1}f\|_{L_p(\{-1,1\}^n;X)} \leq K \|\Delta^{1/2}f\|_{L_p(\{-1,1\}^n;X)}.
\end{equation}
Consider the linear functional $\xi:\big(\mathrm{span}\big\{\sqrt{1-x^2}x^{d-1},\ldots,\sqrt{1-x^2}x^{d+m-1}\big\},\|\cdot\|_{\ms{C}([0,1])}\big)$ given by
\begin{equation}
\xi\Big(  \sqrt{1-x^2} \sum_{k=d}^{d+m} k a_k x^{k-1}\Big) \eqdef \sum_{k=d}^{d+m} a_k.
\end{equation}
Then, Erd\'elyi's inequality \eqref{eq:erdelyi} can be rewritten as
\begin{equation} \label{ineq4}
\forall \ p\in\mathrm{span}\big\{\sqrt{1-x^2}x^{d-1},\ldots,\sqrt{1-x^2}x^{d+m-1}\big\},  \ \ \ |\xi(p)| \leq 6\sqrt{\frac{m}{d}} \|p\|_{\ms{C}([0,1])}.
\end{equation}
Therefore, by the Hahn--Banach theorem and the Riesz representation theorem, there exists a complex measure $\sigma$ on $[0,1]$ such that $\|\sigma\|_{\ms{M}([0,1])}\leq 6\sqrt{\frac{m}{d}}$ and
\begin{equation} \label{ide4}
\forall \ p\in\mathrm{span}\big\{\sqrt{1-x^2}x^{d-1},\ldots,\sqrt{1-x^2}x^{d+m-1}\big\},  \ \ \ \int_0^1 p(x)\diff\sigma(x) = \xi(p).
\end{equation}
Since $\widehat{f}(A)=0$ when $|A|\notin\{d,\ldots,d+m\}$, we deduce that
\begin{equation}
\begin{split}
\|f\|&_{L_p(\{-1,1\}^n;X)} \stackrel{\eqref{ide4}}{=} \Big\| \int_0^1 \sqrt{1-x^2} \Delta x^{\Delta-1} f\diff\sigma(x)\Big\|_{L_p(\{-1,1\}^n;X)} 
\\ & \leq \int_0^1 \big\|\sqrt{1-x^2} \Delta x^{\Delta-1} f\big\|_{L_p(\{-1,1\}^n;X)}\diff|\sigma|(x) \stackrel{\eqref{eq:rewritecauchy}}{\leq} \int_0^1 K\|\Delta^{1/2}f\|_{L_p(\{-1,1\}^n;X)} \diff|\sigma|(x)
\\ & = K\|\sigma\|_{\ms{M}([0,1])} \|\Delta^{1/2}f\|_{L_p(\{-1,1\}^n;X)} \stackrel{\eqref{ineq4}}{\leq} 6K\sqrt{\frac{m}{d}} \|\Delta^{1/2}f\|_{L_p(\{-1,1\}^n;X)},
\end{split}
\end{equation}
which is equivalent to \eqref{eq:delta1/2}.
\hfill$\Box$

\medskip

\noindent {\it Proof of Theorem \ref{thm:reversebernsteinKconvex}.}
Fix $n,d,m\in\N$ with $d+m\leq n$ and a function $f:\{-1,1\}^n\to X$ of degree at most $d+m$ which is also in the $d$-th tail space. Then, Theorem \ref{thm:delta1/2} implies that
\begin{equation}
\begin{split}
\|\Delta f\|_{L_p(\{-1,1\}^n;X)} & = \|\Delta^{1/2}\Delta^{1/2}f\|_{L_p(\{-1,1\}^n;X)} \\ & \stackrel{\eqref{eq:delta1/2}}{\geq} c(p,X) \sqrt{\frac{d}{m}} \|\Delta^{1/2}f\|_{L_p(\{-1,1\}^n;X)} \stackrel{\eqref{eq:delta1/2}}{\geq} c(p,X)^2 \frac{d}{m} \|f\|_{L_p(\{-1,1\}^n;X)},
\end{split}
\end{equation}
which is the desired inequality.
\hfill$\Box$

\section{Estimates for scalar valued functions} \label{sec:4}

We noticed in Corollary \ref{cor:lens} that Pisier's $K$-convexity theorem easily implies that for every $K$-convex Banach space $(X,\|\cdot\|_X)$ and $p\in(1,\infty)$ there exist $r=r(p,X)\in[1,\infty)$ and $K=K(p,X)\in[1,\infty)$ such that
\begin{equation}
\sup_{w\in\Omega(r)}\sup_{n\in\mathbb{N}}\|w^{\Delta}\|_{{L_p(\{-1,1\}^n;X)}\to {L_p(\{-1,1\}^n;X)}} \leq K,
\end{equation}
where the domain $\Omega(r)$ is given by \eqref{eq:Omega(r)def}. The scalar valued version of this result was first studied in classical work of Weissler \cite{Wei79} who found the exact domain $\Omega_p\subseteq\mb{D}$ for which  the operator $w^\Delta:L_p(\{-1,1\}^n;\C)\to L_p(\{-1,1\}^n;\C)$ is uniformly bounded (equivalently, a contraction) for $p\in(1,\infty)\setminus \big(\frac{3}{2},2\big)\cup(2,3)$ and $w\in\Omega_p$. Finally, the domain $\Omega_p$ for $p$ in the remaining range $\big(\frac{3}{2},2\big)\cup(2,3)$ was recently identified by the second named author and Nazarov in \cite{IN19}.

\begin{theorem} [Weissler, Ivanisvili--Nazarov] \label{thm:weissler}
For $p\in(1,\infty)$, let $r_p=\frac{p}{2\sqrt{p-1}}$. Then, for every $p\in(1,\infty)$ and $w\in\mb{C}$, we have
\begin{equation} \label{eq:weissler}
w\in\overline{\Omega(r_p)} \ \ \Longleftrightarrow \ \  \sup_{n\in\mb{N}} \|w^{\Delta}\|_{{L_p(\{-1,1\}^n;\C)}\to {L_p(\{-1,1\}^n;\C)}}=1.
\end{equation}
Furthermore, both conditions are equivalent to
\begin{equation}
\sup_{n\in\mb{N}} \|w^{\Delta}\|_{{L_p(\{-1,1\}^n;\C)}\to {L_p(\{-1,1\}^n;\C)}} <\infty.
\end{equation}
\end{theorem}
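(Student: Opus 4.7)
The plan is to reduce the problem to the $n=1$ case by tensorization and then analyze that case by complex analysis. The operator $w^{\Delta}$ on $\{-1,1\}^n$ is the $n$-fold tensor power of the rank-two operator $T_w:L_p(\{-1,1\};\C)\to L_p(\{-1,1\};\C)$ given by $T_w(a+b\e)=a+wb\e$. A direct induction argument, applying the putative $n=1$ inequality slice-by-slice along each coordinate, yields
\begin{equation*}
\|w^{\Delta}\|_{L_p(\{-1,1\}^n;\C)\to L_p(\{-1,1\}^n;\C)} = \|T_w\|_{L_p(\{-1,1\};\C)\to L_p(\{-1,1\};\C)}^{n}.
\end{equation*}
Since $T_w$ preserves constants, $\|T_w\|_{L_p\to L_p}\geq 1$, so the three conditions in the statement of the theorem are all equivalent to each other, and the entire theorem reduces to the single claim
\begin{equation*}
w\in\overline{\Omega(r_p)}\Longleftrightarrow\|T_w\|_{L_p(\{-1,1\};\C)\to L_p(\{-1,1\};\C)}\leq 1.
\end{equation*}

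Writing $f(\e)=a+b\e$ with $a,b\in\C$ and normalizing $a=1$ (the degenerate case $a=0$ being handled by $\overline{\Omega(r_p)}\subseteq\overline{\mb{D}}$), the contractivity of $T_w$ is equivalent to the two-point inequality
\begin{equation*}
\Phi_p(w,z)\eqdef|1+z|^p+|1-z|^p-|1+wz|^p-|1-wz|^p\geq 0\quad\text{for every }z\in\C.
\end{equation*}
For the necessity direction, I would compute the Hessian of $\Phi_p(w,\cdot)$ at $z=0$; this is a real quadratic form in $(\mathrm{Re}\,z,\mathrm{Im}\,z)$ whose nonnegativity cuts out exactly $\overline{\Omega(r_p)}$, with the arithmetic $r_p=p/(2\sqrt{p-1})$ emerging naturally from that infinitesimal calculation. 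For $w\notin\overline{\Omega(r_p)}$, perturbing $z$ in the eigendirection along which the Hessian is negative already witnesses $\|T_w\|_{L_p}>1$.

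The core of the proof is the sufficiency of $w\in\overline{\Omega(r_p)}$ for the global inequality $\Phi_p(w,z)\geq 0$. For $p\in(1,3/2]\cup\{2\}\cup[3,\infty)$, the approach of Weissler \cite{Wei79} works: in these ranges $w\mapsto-\Phi_p(w,z)$ admits a subharmonicity (in fact plurisubharmonicity) property on a neighborhood of $\overline{\Omega(r_p)}$ for each fixed $z\in\C$, so the maximum principle reduces the global inequality to the boundary $\partial\Omega(r_p)$, where equality at the cusps $\pm 1$ pins down the constant. The remaining range $p\in(3/2,2)\cup(2,3)$ was out of reach of these methods and is the content of \cite{IN19}; here the plurisubharmonicity property fails, and the strategy to follow is a Bellman function style construction of an explicit closed-form minorant $B(w,z)\leq\Phi_p(w,z)$ whose vanishing locus meets the real axis exactly along $\partial\Omega(r_p)\cap\R$, together with a direct verification of the required convexity by hand. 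I expect this Bellman function step in the exceptional range to be by far the most delicate part of the argument, and it is where the essential technical novelty of \cite{IN19} is concentrated.
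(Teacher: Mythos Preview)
The paper does not prove this theorem at all: it is quoted from the literature (Weissler \cite{Wei79} for $p\in(1,\infty)\setminus\big((3/2,2)\cup(2,3)\big)$ and Ivanisvili--Nazarov \cite{IN19} for the remaining range) and then used as a black box in the proofs of Theorems~\ref{thm:smoothingR} and~\ref{thm:laplacianR}. There is therefore no proof in the paper to compare your proposal against.

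That said, your outline is a faithful high-level summary of how those references proceed. The tensorization step reducing everything to the two-point inequality on $\{-1,1\}$ is standard and correct; your claim that the operator norm of $w^\Delta$ on $\{-1,1\}^n$ equals $\|T_w\|_{L_p\to L_p}^n$ is true, though only the inequality $\leq$ (via iterated Minkowski) and the observation that product test functions force $\sup_n\|w^\Delta\|=\infty$ whenever $\|T_w\|>1$ are actually needed for the equivalences in the statement. The necessity direction via the second-order expansion at $z=0$ is exactly Weissler's argument. Your description of the sufficiency proof in the ``easy'' ranges as a subharmonicity/maximum-principle argument is in the right spirit, though Weissler's original treatment is phrased somewhat differently; and your attribution of the missing range $(3/2,2)\cup(2,3)$ to a delicate explicit construction in \cite{IN19} is accurate. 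Just be aware that what you have written is a roadmap rather than a proof: the actual verification in \cite{IN19} is substantial, and nothing in your sketch would allow a reader to reconstruct it.
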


The decay properties of Theorem \ref{thm:smoothingR} are straightforward consequences of Theorem \ref{thm:weissler} and the results of Section \ref{sec:3}.

\medskip

\noindent {\it Proof of Theorem \ref{thm:smoothingR}.}
Inequality \eqref{eq:lowersmoothingR} for functions of low degree is a straightforward consequence of Theorem \ref{thm:weissler} combined with Proposition \ref{prop:reverseheatcrucial} and \eqref{eq:phiomegacr}. Similarly, inequality \eqref{eq:uppersmoothingR} for functions in the tail space follows from Theorem \ref{thm:weissler} combined with Proposition \ref{prop:crucialheat} and \eqref{eq:phiomegar}.
\hfill$\Box$

\medskip

\noindent {\it Proof of Corollary \ref{cor:momentcompR}.}
The deduction of Corollary \ref{cor:momentcompR} from Theorem \ref{thm:smoothingR} is identical to the deduction of Corollary \ref{cor:realchaos} from Theorem \ref{thm:reverseheatgeneral}; it follows by concatenating \eqref{eq:lowersmoothingR} with Bonami's hypercontractive inequality \eqref{eq:bonami} and choosing $t=\frac{1}{2}\log\big(\frac{p-1}{q-1}\big)$.
\hfill$\Box$

\medskip

We now proceed with the proof of Theorem \ref{thm:momentcomp1}, the improved $L_1-L_2$ moment comparison for low degree functions. We use the following important result of Beckner \cite{Bec75} and Weissler \cite{Wei79}.

\begin{theorem} [Beckner, Weissler] \label{thm:beckner}
Let $p\in(2,\infty)$ and $p^\ast \in(1,2)$ its conjugate exponent, that is $p^\ast = \frac{p}{p-1}$. Then, a complex number $w\in\mb{D}$ satisfies
\begin{equation} \label{eq:becknerassumption}
\max\Big\{ \Big|w-\frac{p-2}{2(p-1)}\Big|, \Big|w+\frac{p-2}{2(p-1)}\Big| \Big\} \leq \frac{p}{2(p-1)} \Big\}
\end{equation}
if and only if
\begin{equation} \label{eq:becknerconclusion}
\sup_{n\in\mb{N}} \|w^{\Delta}\|_{{L_p(\{-1,1\}^n;\C)}\to {L_{p^\ast}(\{-1,1\}^n;\C)}} =1.
\end{equation}
Furthermore, both conditions are equivalent to
\begin{equation}
\sup_{n\in\mb{N}} \|w^{\Delta}\|_{{L_p(\{-1,1\}^n;\C)}\to {L_{p^\ast}(\{-1,1\}^n;\C)}} <\infty.
\end{equation}
\end{theorem}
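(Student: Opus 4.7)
The plan is to reduce the uniform operator bound to a single two-point inequality and then to characterize precisely when that inequality is satisfied.

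First I would show that the contractive statement \eqref{eq:becknerconclusion} is equivalent to the two-point inequality
\begin{equation*}
\Big(\tfrac{1}{2}|a+wb|^{p^*}+\tfrac{1}{2}|a-wb|^{p^*}\Big)^{1/p^*}\le\Big(\tfrac{1}{2}|a+b|^{p}+\tfrac{1}{2}|a-b|^{p}\Big)^{1/p}\qquad \forall\,a,b\in\mb{C}.
\end{equation*}
The $(\Rightarrow)$ direction is the $n=1$ case. For $(\Leftarrow)$ one inducts on $n$: decomposing $f(\e_1,\ldots,\e_n)=g(\e_1,\ldots,\e_{n-1})+\e_n h(\e_1,\ldots,\e_{n-1})$ and applying the two-point inequality in the last coordinate gives an intermediate bound in a mixed-norm space; Minkowski's inequality, valid precisely because $p\ge p^*$, then lets one swap the inner $L_p$ and outer $L_{p^*}$ norms to close the induction. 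The same tensorization principle also shows that mere uniform boundedness of $\|w^\Delta\|_{L_p\to L_{p^*}}$ in $n$ forces uniform contractivity, since any strict violation of the two-point inequality at a single pair $(a,b)$ would, by taking tensor powers, drive the operator norm on $\{-1,1\}^{kn}$ to infinity as $k\to\infty$.

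Next I would address necessity of \eqref{eq:becknerassumption}. Setting $a=1$ and $b=t\zeta$ for small $t>0$ and $\zeta\in\mb{C}$, and Taylor expanding both sides of the two-point inequality to order $t^2$, produces a homogeneous quadratic inequality in $(\mathrm{Re}\,\zeta,\mathrm{Im}\,\zeta)$ whose coefficients involve $w$, $w^2$, and the exponents $p$, $p^*=p/(p-1)$. Requiring this inequality to hold for all $\zeta$ amounts to positive semidefiniteness of a $2\times 2$ Hermitian matrix; a direct computation, using $p^*-1=1/(p-1)$, recasts the two resulting eigenvalue conditions as exactly the twin-disk constraint of \eqref{eq:becknerassumption}, the two disks corresponding to the two extremal orientations of $\zeta$ relative to $\arg w$.

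For sufficiency one must prove the two-point inequality for every $w$ inside the intersection of the two disks. The classical route, due to Beckner and Weissler, is to verify the inequality first on the boundary of the Cassini region, where after an appropriate reparametrization it becomes an $L_p\to L_{p^*}$ contractivity statement for the Mehler (Ornstein--Uhlenbeck) semigroup on $\mb{R}$, recovered via the central limit theorem from iterated tensorization of the two-point space and reduced to a one-dimensional calculus fact about the logarithmic convexity of $t\mapsto\|f\|_{L_{r(t)}(\mb{R},\gamma)}$ along the flow. The interior of the region is then handled by subharmonicity of $\log|P_{a,b}(w)|$ for suitable polynomials $P_{a,b}$, or equivalently by a maximum-principle argument on the lens domain. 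The main obstacle is precisely this sufficiency step: identifying and validating the sharp extremizers on each boundary arc for genuinely complex $w$ is considerably more delicate than the original real-axis Beckner inequality, and requires either an explicit Bellman function tailored to the Cassini geometry or a careful subharmonic extension across the lens.
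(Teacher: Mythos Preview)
The paper does not actually prove this theorem: it is quoted from the literature, accompanied only by the one-sentence remark that Beckner established \eqref{eq:becknerconclusion} at the special points $w=\pm i\sqrt{p^\ast-1}$ and that Weissler then extended the argument to the full domain \eqref{eq:becknerassumption}. There is therefore no proof in the paper against which to compare your proposal.

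Your outline is nonetheless broadly faithful to what Beckner and Weissler do: reduction to a two-point inequality by tensorization, necessity via a second-order expansion at $b\to 0$, and a direct (and, as you correctly flag, delicate) verification of the two-point estimate for sufficiency. Two corrections are in order. First, the region in \eqref{eq:becknerassumption} is the intersection of two discs --- a lens, as in \eqref{eq:Omega(r)def} --- not a Cassini oval; a Cassini oval is cut out by a bound on the \emph{product} of the distances to two foci, not on their maximum. Second, and more substantively, your tensorization step does not close as written. After applying the two-point inequality in the last coordinate you are left with the mixed norm $\|w^{\Delta'}f\|_{L_{p^\ast}(\e';\,L_p(\e_n))}$, whereas the inductive hypothesis controls $\|w^{\Delta'}f\|_{L_p(\e_n;\,L_{p^\ast}(\e'))}$; Minkowski's inequality for $p\ge p^\ast$ gives
\[
\|F\|_{L_{p^\ast}(\e';\,L_p(\e_n))}\ \ge\ \|F\|_{L_p(\e_n;\,L_{p^\ast}(\e'))},
\]
which is the wrong direction. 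The clean fix is to dualize first: since $(w^\Delta)^\ast=\bar w^{\,\Delta}$ and the region \eqref{eq:becknerassumption} is conjugation-symmetric, the statement is equivalent to contractivity of $w^\Delta:L_{p^\ast}\to L_p$, and \emph{that} direction tensorizes by Minkowski because now the target exponent dominates the source.
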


In \cite{Bec75}, Beckner proved that $w=\pm i\sqrt{p^\ast-1}$ satisfies \eqref{eq:becknerconclusion} and then Weissler \cite{Wei79} modified his argument to obtain \eqref{eq:becknerconclusion} for $w$ in the full domain\eqref{eq:becknerassumption}.

\medskip

\noindent {\it Proof of Theorem \ref{thm:momentcomp1}.}
Fix $n\in\N$ and $d\in\{1,\ldots,n\}$. For $p\in(2,\infty)$, denote by $V_p\subseteq\mb{D}$ the set of all $w\in\mb{D}$ that satisfy \eqref{eq:becknerassumption}. Then, a straightforward computation shows that
\begin{equation}
i\sqrt{p-1} V_p = \overline{\Omega\Big(\frac{p}{2\sqrt{p-1}}\Big)},
\end{equation}
where $\Omega(r)$ is defined by \eqref{eq:Omega(r)def}. Therefore, by Lemma \ref{lem:computeconformal}, the function $\varphi_p: V_p^c\to\overline{\mb{D}}^c$
\begin{equation}
\forall \ w\in \mb{C}\setminus V_p, \ \ \ \varphi_p(w)\eqdef \frac{(i\sqrt{p-1}w-1)^{\frac{\pi}{2\pi-\theta_p}}+(i\sqrt{p-1}w+1)^{\frac{\pi}{2\pi-\theta_p}}}{(i\sqrt{p-1}w-1)^{\frac{\pi}{2\pi-\theta_p}}-(i\sqrt{p-1}w+1)^{\frac{\pi}{2\pi-\theta_p}}},
\end{equation}
where $\theta_p = 2\arcsin\big(\frac{2\sqrt{p-1}}{p}\big)$, is a conformal equivalence between the complement of $V_p$ and the complement of the closed unit disc $\overline{\mb{D}}$. A duality argument identical to that of Proposition \ref{prop:reverseheatcrucial} combined with \eqref{eq:becknerconclusion} now implies that for every function $f:\{-1,1\}^n\to \C$ of degree at most $d$,
\begin{equation}
\forall \ t\geq0, \ \ \ \|e^{-t\Delta}f\|_{L_{p^\ast}(\{-1,1\}^n;\C)} \geq \frac{1}{\big|\varphi_p\big(e^t\big)\big|^d} \|f\|_{L_p(\{-1,1\}^n;\C)}.
\end{equation}
In particular, for $t=0$, we have
\begin{equation} \label{eq:pp*}
\|f\|_{L_p(\{-1,1\}^n;\C)} \leq |\varphi_p(1)|^d \|f\|_{L_{p^\ast}(\{-1,1\}^n;\C)}.
\end{equation}
Now, by H\"older's inequality, we get
\begin{equation}
\|f\|_{L_2(\{-1,1\}^n;\C)}\leq \|f\|_{L_1(\{-1,1\}^n;\C)}^{\frac{p-2}{2(p-1)}} \|f\|_{L_p(\{-1,1\}^n;\C)}^{\frac{p}{2(p-1)}}
\end{equation}
and
\begin{equation}
\|f\|_{L_{p^\ast}(\{-1,1\}^n;\C)} \leq \|f\|_{L_1(\{-1,1\}^n;\C)}^{\frac{p-2}{p-1}} \|f\|_{L_p(\{-1,1\}^n;\C)}^{\frac{1}{p-1}}.
\end{equation}
Combining the two, we deduce that
\begin{equation}
\begin{split}
\|f\|_{L_2(\{-1,1\}^n;\C)} \leq \|f\|_{L_1(\{-1,1\}^n;\C)}  \left(\frac{\|f\|_{L_p(\{-1,1\}^n;\C)}}{\|f\|_{L_{p^\ast}(\{-1,1\}^n;\C)}}\right)&^{\frac{p}{2(p-2)}} 
\\ & \stackrel{\eqref{eq:pp*}}{\leq} |\varphi_p(1)|^{\frac{pd}{2(p-2)}} \|f\|_{L_1(\{-1,1\}^n;\C)}.
\end{split}
\end{equation}
Consequently,
\begin{equation}
\|f\|_{L_2(\{-1,1\}^n;\C)} \leq C^d \|f\|_{L_1(\{-1,1\}^n;\C)},
\end{equation}
where
\begin{equation}
C \eqdef \inf_{p>2} \left| \frac{(i\sqrt{p-1}-1)^{\frac{\pi}{2\pi-\theta_p}}+(i\sqrt{p-1}+1)^{\frac{\pi}{2\pi-\theta_p}}}{(i\sqrt{p-1}-1)^{\frac{\pi}{2\pi-\theta_p}}-(i\sqrt{p-1}+1)^{\frac{\pi}{2\pi-\theta_p}}} \right|^{\frac{p}{2(p-2)}} < 2.69076.
\end{equation}
The last inequality can be checked numerically.
\hfill$\Box$

\medskip

We note in passing that the bound $\|f\|_{L_2(\{-1,1\}^n;\mb{C})} \leq e^{d/2} \|f\|_{L_1(\{-1,1\}^n;\mb{C})}$, which improves upon Theorem \ref{thm:momentcomp1}, was obtained in the recent work \cite{IT18} for $d$-homogeneous functions $f:\{-1,1\}^n\to\C$.

\begin{remark} \label{rem:complexhyperc}
Following Beckner's pioneering work \cite{Bec75}, significant efforts were devoted in identifying the complex domains consisting of those $w\in\mb{D}$ for which
\begin{equation}
\sup_{n\in\mb{N}} \|w^{\Delta}\|_{{L_p(\{-1,1\}^n;\C)}\to {L_q(\{-1,1\}^n;\C)}} <\infty
\end{equation}
for general $p\geq q>1$. In \cite{Wei79}, Weissler managed to precisely characterize these complex numbers $w$ for all $p\geq q>1$ apart from the cases $\frac{3}{2}<q \leq p<2$ and $2< q\leq p<3$ and posed a conjecture for $p,q$ in the remaining ranges. The case $p=q$ of his conjecture was recently settled by the second named author and Nazarov \cite{IN19}. In contrast to this long standing problem, Epperson \cite{Epp89} (see also \cite{Jan97}) has characterized those $w\in\mb{D}$ for which
\begin{equation}
\sup_{n\in\mb{N}} \|w^{L}\|_{{L_p((\R^n,\gamma_n);\C)}\to {L_q((\R^n,\gamma_n);\C)}} <\infty,
\end{equation}
where $\gamma_n$ is the standard Gaussian measure on $\R^n$ and $L=\Delta-\langle x, \nabla\rangle$ is the generator of the Ornstein--Uhlenbeck semigroup, for every $p>q>1$. For $p$ and $q$ not belonging in the missing ranges mentioned earlier, the domains of complex hypercontractivity for the Hamming cube and the Gauss space coincide and it is natural to believe that this is also the case when $\frac{3}{2}<q< p<2$ and $2<q< p<3$.
\end{remark}

It is evident from the proof above that the constant 2.69076 appearing in Theorem \ref{thm:momentcomp1} is not optimal. In fact, one can run a similar argument starting with the inequality
\begin{equation}
\|f\|_{L_2(\{-1,1\}^n;\C)} \leq \|f\|_{L_1(\{-1,1\}^n;\C)}  \left(\frac{\|f\|_{L_p(\{-1,1\}^n;\C)}}{\|f\|_{L_{q}(\{-1,1\}^n;\C)}}\right)^{\frac{pq}{2(p-q)}},
\end{equation}
which is valid for any $p>2>q$ and any function $f:\{-1,1\}^n\to\C$. Then, to obtain an $L_q-L_p$ moment comparison as in the proof of Theorem \ref{thm:momentcomp1}, one should use the general $L_q-L_p$ complex hypercontractivity of \cite{Wei79} and explicitly compute the conformal map of the domain provided by Weissler's theorem. In fact, such a computation could also provide an improvement of Corollary \ref{cor:momentcompR}. We did not attempt to optimize any of these computations as the domains of $L_q-L_p$ hypercontractivity for $q\notin\{p,p^\ast\}$ tend to be quite complicated. We also note that for $p>2$, the least constant $C_p$ for which every function $f:\{-1,1\}^n\to\C$ of degree at most $d$ satisfies
\begin{equation} \label{sqrtp-1}
\|f\|_{L_p(\{-1,1\}^n;\C)} \leq C_p^{d} \|f\|_{L_2(\{-1,1\}^n;\C)}
\end{equation}
is known to be $C_p=\sqrt{p-1}$ (see \cite{IT18}), yet the sharp constant in \eqref{eq:momentcomp1} is still unknown. Inequality \eqref{sqrtp-1} with $C_p=\sqrt{p-1}$ is usually proven via an orthogonality argument (see \cite[Theorem~9.21]{O'D14}), but a duality based proof can be given using the result of Weissler \cite{Wei79} who showed that for $p\geq2$
\begin{equation} \label{www}
|w|\leq \frac{1}{\sqrt{p-1}} \ \ \ \Longleftrightarrow \ \ \ 
\sup_{n\in\mb{N}} \|w^{\Delta}\|_{{L_p(\{-1,1\}^n;\C)}\to {L_2(\{-1,1\}^n;\C)}}=1.
\end{equation}
A straightfoward adaptation of the proof of Proposition \ref{prop:reverseheatcrucial} then implies that for every function $f:\{-1,1\}^n\to\C$ of degree at most $d$, we have
\begin{equation}
\forall \ t\geq0, \ \ \ \|e^{-t\Delta}f\|_{L_2(\{-1,1\}^n;\C)} \geq \Big(\frac{e^{-t}}{\sqrt{p-1}}\Big)^d \|f\|_{L_p(\{-1,1\}^n;\C)},
\end{equation}
which for $t=0$ coincides with \eqref{sqrtp-1} with $C_p=\sqrt{p-1}$.
\medskip

\noindent {\it Proof of Theorem \ref{thm:laplacianR}.}
The proof is a mechanical adaptation of the proof of Theorem \ref{thm:laplacianKconvex}, where \eqref{eq:usecomplexdomain} is replaced by the characterization \eqref{eq:weissler} of the complex domain where the heat flow is a contraction. The fact that the underlying constant $K=K(\Omega,w)$ in Szeg\"o's theorem can be taken to be the absolute constant 10 if $\Omega$ is a lens domain of the form \eqref{eq:Omega(r)def} and $w=1$ was shown in \cite[Proposition~15]{EI18}. This proves the Bernstein--Markov inequality \eqref{eq:gooddomain}.
\hfill$\Box$

\medskip

To derive the Bernstein--Markov inequalities for the discrete gradient presented in Theorem \ref{thm:gradR}, we will need to make use of Lust-Piquard's Riesz transform inequalities \cite{LP98} (see also \cite{BELP08} where the implicit dependence in $p$ was improved).

\begin{theorem} [Lust-Piquard]
For every $p\in[2,\infty)$, there exist $c_p,C_p\in(0,\infty)$ such that for every $n\in\N$, every function $f:\{-1,1\}^n\to\C$ satisfies
\begin{equation} \label{eq:lust}
c_p\|\Delta^{1/2}f\|_{L_p(\{-1,1\}^n;\C)} \leq \|\nabla f\|_{L_p(\{-1,1\}^n;\C)}\leq C_p \|\Delta^{1/2}f\|_{L_p(\{-1,1\}^n;\C)}.
\end{equation}
\end{theorem}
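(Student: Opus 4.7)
I would prove this classical Riesz transform equivalence by combining a heat-semigroup Littlewood--Paley $g$-function with the Khintchine--Kahane inequality, in the spirit of Stein's approach to Riesz transforms for symmetric Markov semigroups. The key auxiliary object is the $g$-function
\begin{equation*}
g(f)(\e) \eqdef \Big(\int_0^\infty t\sum_{i=1}^n \big(\partial_i e^{-t\Delta}f(\e)\big)^2\,\ud t\Big)^{1/2},
\end{equation*}
which by Stein's theorem for symmetric submarkovian semigroups satisfies $\|g(f)\|_{L_p(\{-1,1\}^n;\C)}\asymp_p \|\Delta^{1/2}f\|_{L_p(\{-1,1\}^n;\C)}$ for every $p\in(1,\infty)$, since $\{e^{-t\Delta}\}_{t\geq 0}$ is such a semigroup on the Hamming cube.

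For the lower bound in \eqref{eq:lust}, I would exploit the commutation relation $\partial_i e^{-t\Delta}=e^{-t}\,e^{-t\Delta}\partial_i$ (immediate from the Walsh expansion) together with Cauchy--Schwarz applied to the Markov operator $e^{-t\Delta}$, yielding
\begin{equation*}
\sum_{i=1}^n \big(\partial_i e^{-t\Delta}f\big)^2 \leq e^{-2t}\,e^{-t\Delta}\Big(\sum_{i=1}^n (\partial_i f)^2\Big).
\end{equation*}
Since $p\geq 2$, taking $L_{p/2}$-norms, invoking $L_{p/2}$-contractivity of the heat semigroup and integrating $\int_0^\infty t\,e^{-2t}\,\ud t<\infty$ would give $\|g(f)\|_p \lesssim \|\nabla f\|_p$, which combined with the $g$-function equivalence produces $\|\Delta^{1/2}f\|_p\lesssim_p \|\nabla f\|_p$. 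For the upper bound, I would first apply Khintchine--Kahane to obtain
\begin{equation*}
\|\nabla f\|_{L_p(\{-1,1\}^n;\C)} \lesssim_p \Big(\E_\delta\Big\|\sum_{i=1}^n\delta_i\partial_i f\Big\|_{L_p(\{-1,1\}^n;\C)}^p\Big)^{1/p},
\end{equation*}
reducing matters to a uniform bound on the directional Riesz transform $R_\delta f=\sum_i\delta_i\partial_i f$ by $\|\Delta^{1/2}f\|_p$. The Bochner subordination identity applied to $h=\Delta^{1/2}f$ expresses $R_\delta f$ as a vector-valued singular integral in $t$, which I would control via a dual square-function argument ultimately reducing back to the $g$-function of $\Delta^{1/2}f$.

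\textbf{Main obstacle.} The upper bound is the genuine difficulty: the equivalence with the pointwise $\ell^2$ gradient is not a formal duality, since $\nabla f$ depends nonlinearly (through an $\ell^2$ sum) on the partial derivatives, so one cannot simply invoke the lower bound for the conjugate exponent. Lust-Piquard's original argument circumvents this via an operator-valued Fourier multiplier analysis crucially exploiting Bonami's hypercontractive estimates and the symmetry of the semigroup; the later work \cite{BELP08} streamlines this and tracks an improved dependence on $p$.
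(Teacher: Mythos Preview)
The paper does not prove this theorem; it is quoted from Lust-Piquard \cite{LP98} (with the sharper constants from \cite{BELP08}) and used as a black box. So there is no ``paper's own proof'' to compare against.

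That said, your sketch contains a genuine error that would make the argument collapse on the Hamming cube. The commutation relation you invoke,
\[
\partial_i e^{-t\Delta} = e^{-t}\, e^{-t\Delta}\partial_i,
\]
is \emph{false} here: since $\partial_i$ is a projection ($\partial_i^2=\partial_i$) that commutes with $\Delta$, one simply has $\partial_i e^{-t\Delta}=e^{-t\Delta}\partial_i$ with no extra factor $e^{-t}$. The identity you wrote is the Ornstein--Uhlenbeck commutation relation $\partial_i e^{-tL}=e^{-t}e^{-tL}\partial_i$, which relies on $\partial_i$ being a genuine first-order derivative; the discrete $\partial_i$ does not satisfy it. Without that damping factor your integral $\int_0^\infty t\,\diff t$ in the lower-bound step diverges, and more fundamentally your $g$-function does not have the $L_2$ identity you need: a direct spectral computation gives
\[
\|g(f)\|_{L_2}^2=\int_0^\infty t\sum_{A}|A|e^{-2t|A|}|\widehat{f}(A)|^2\,\diff t=\tfrac14\sum_{A\neq\emptyset}\frac{|\widehat{f}(A)|^2}{|A|}=\tfrac14\|\Delta^{-1/2}f\|_{L_2}^2,
\]
not $\|\Delta^{1/2}f\|_{L_2}^2$. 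So the basic Littlewood--Paley scaffolding, transplanted verbatim from the Gaussian setting, does not line up on the cube.

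Your diagnosis of the difficulty is correct, however: the upper bound $\|\nabla f\|_p\lesssim_p\|\Delta^{1/2}f\|_p$ for $p\geq 2$ is the substantive part and is precisely what fails for $p<2$ (cf.\ the Naor--Schechtman obstruction recalled just after the theorem in the paper). Lust-Piquard's actual proof proceeds via noncommutative/operator-valued methods and Khintchine-type inequalities rather than a direct Meyer-style commutation argument; if you want a semigroup proof you must replace the missing commutation by a more delicate pointwise inequality for $\Gamma(e^{-t\Delta}f)$, which is exactly the content of the Bakry--\'Emery machinery used later in the paper (Lemma \ref{lem:bgl}).
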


\medskip

\noindent {\it Proof of Theorem \ref{thm:gradR} for $p\geq2$.}
Fix $n\in\N$, $d\in\{1,\ldots,n\}$ and let $f:\{-1,1\}^n\to \C$ be a function of degree at most $d$. Then, by Lust-Piquard's inequality \eqref{eq:lust}, we have
\begin{equation} \label{eq:lust1}
\|\nabla f\|_{L_p(\{-1,1\}^n;\C)}\leq C_p \|\Delta^{1/2}f\|_{L_p(\{-1,1\}^n;\C)}.
\end{equation}
Combining \eqref{eq:lust1}, Naor and Schechtman's inequality \eqref{eq:naos} for $\beta=\frac{1}{2}$ and Theorem \ref{thm:laplacianR}, we derive the Bernstein--Markov inequalities of Theorem \ref{thm:gradR} for $p\geq2$.
\hfill$\Box$

\medskip

Even though the one-sided Riesz transform inequality 
\begin{equation} \label{eq:lust2}
\forall \ p\in(1,\infty), \ \ \ c_p\|\Delta^{1/2}f\|_{L_p(\{-1,1\}^n;\C)} \leq \|\nabla f\|_{L_p(\{-1,1\}^n;\C)}
\end{equation}
is true for $p\in(1,2)$ (see \cite{LP98}), its reverse is known to be false in this range. In fact, it has been shown by Naor and Schechtman (see \cite[Lemma~5.5]{BELP08}) that, if $p\in(1,2)$, a dimension independent inequality of the form
\begin{equation}
\|\nabla f\|_{L_p(\{-1,1\}^n;\C)}\leq C \|\Delta^{\beta}f\|_{L_p(\{-1,1\}^n;\C)}
\end{equation}
implies that $\beta\geq \frac{1}{p}$. We will now show that this is {\it almost} optimal. The following proposition is due to A.~Naor, to whom we are grateful for allowing us to include it here. The proof presented here is different than Naor's original proof, which will appear elsewhere.

\begin{proposition} [Naor] \label{prop:naor}
For every $p\in(1,2)$ and every $\e\in\big(0,\frac{1}{2}\big)$, there exists $C_p\in(0,\infty)$ such that for every $n\in\N$, every function $f:\{-1,1\}^n\to\C$ satisfies
\begin{equation} \label{eq:naor}
\|\nabla f\|_{L_p(\{-1,1\}^n;\C)}\leq \frac{C_p}{\e} \|\Delta^{1/p+\e}f\|_{L_p(\{-1,1\}^n;\C)}.
\end{equation}
\end{proposition}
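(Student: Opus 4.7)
The plan is to combine the subordination formula for fractional powers of $\Delta$ with a careful short-time estimate on the heat-smoothed gradient. We may assume without loss of generality that $\widehat{f}(\emptyset)=0$, since $\nabla$ annihilates constants. Set $\alpha=\tfrac{1}{p}+\e$ and $g=\Delta^{\alpha}f$. For mean-zero functions, subordination gives
\begin{equation*}
f=\Delta^{-\alpha}g=\frac{1}{\Gamma(\alpha)}\int_0^{\infty}t^{\alpha-1}e^{-t\Delta}g\,dt,
\end{equation*}
so differentiating termwise, taking a Rademacher average in an auxiliary $\delta\in\{-1,1\}^n$, and combining Khintchine's inequality (with dimension-free constants, since the target is scalar valued) with Minkowski's integral inequality, it suffices to prove
\begin{equation*}
\frac{1}{\Gamma(\alpha)}\int_0^{\infty}t^{\alpha-1}\Big\|\sum_{i=1}^n\delta_i\partial_i e^{-t\Delta}g\Big\|_{L_p(\{-1,1\}^n\times\{-1,1\}^n;\C)}\,dt\leq \frac{C_p}{\e}\|g\|_{L_p(\{-1,1\}^n;\C)}.
\end{equation*}

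The central technical estimate is the pointwise bound, with $C_p\in(0,\infty)$ depending only on $p$,
\begin{equation*}
\Big\|\sum_{i=1}^n\delta_i\partial_i e^{-t\Delta}g\Big\|_{L_p(\{-1,1\}^n\times\{-1,1\}^n;\C)}\leq C_p\bigl(t^{-1/p}\mathbf{1}_{\{t\leq 1\}}+e^{-t/2}\mathbf{1}_{\{t>1\}}\bigr)\|g\|_{L_p(\{-1,1\}^n;\C)}.
\end{equation*}
For the long-time part $t\geq 1$, I would push $g$ into $L_2$ using Bonami's hypercontractivity (for $t\geq s_p:=\tfrac{1}{2}\log\tfrac{1}{p-1}$ one has $\|e^{-s_p\Delta}g\|_{L_2}\leq \|g\|_{L_p}$), then combine Lust-Piquard at $p=2$ with the analytic-semigroup bound $\|\Delta^{1/2}e^{-u\Delta}\|_{L_2\to L_2}\lesssim u^{-1/2}$ to obtain $\|\nabla e^{-t\Delta}g\|_{L_2(\ell_2)}\lesssim (t-s_p)^{-1/2}\|g\|_{L_p}$; Jensen's inequality ($\|\cdot\|_{L_p(\ell_2)}\leq \|\cdot\|_{L_2(\ell_2)}$ since $p<2$) returns to $L_p$, and the extra exponential factor $e^{-t/2}$ is produced by the spectral gap of $\Delta$ on mean-zero functions. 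For the short-time part $0<t\leq 1$, which is the main difficulty, I would exploit Weissler's complex hypercontractivity (Theorem \ref{thm:weissler}) in the lens domain $\overline{\Omega(r_p)}$ with $r_p=p/(2\sqrt{p-1})$, which has opening angle $\theta_p=2\arcsin(2\sqrt{p-1}/p)$ at $w=1$. The two-variable noise identity
\begin{equation*}
\sum_{i=1}^n\e_i\delta_i\partial_i w^{\Delta}g(\e)=\frac{w}{1-w}\,\msf{Rad}_{\delta}\Big[\sum_{A\subseteq\{1,\ldots,n\}}\widehat{g}(A)\prod_{i\in A}\bigl(w\e_i+(1-w)\delta_i\bigr)\Big](\e,\delta),
\end{equation*}
analytically continued to complex $w\in\overline{\Omega(r_p)}$ and combined with a Cauchy-type contour argument around $w_0=e^{-t}$ exploiting that $\dist(w_0,\partial\Omega(r_p))\asymp t\sin(\theta_p/2)$, should extract the sharp exponent $1/p$, in direct analogy with the derivations of Theorems \ref{thm:smoothingR} and \ref{thm:laplacianR}.

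Granted the key estimate, the integration
\begin{equation*}
\frac{1}{\Gamma(\alpha)}\Big(\int_0^1 t^{\alpha-1-1/p}\,dt+\int_1^{\infty}t^{\alpha-1}e^{-t/2}\,dt\Big)=\frac{1}{\Gamma(\alpha)}\Big(\frac{1}{\e}+O_p(1)\Big)
\end{equation*}
completes the proof of $\|\nabla f\|_{L_p}\leq (C_p/\e)\|\Delta^{1/p+\e}f\|_{L_p}$. The singularity $\int_0^1 t^{\e-1}dt=1/\e$ at the lower endpoint is what produces the $1/\e$ factor in the final bound and directly reflects the Naor--Schechtman sharpness result stated just before the proposition: one cannot take $\e=0$, i.e.~push $\alpha$ below $1/p$, without losing dimension independence.

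The main obstacle will be the short-time rate $t^{-1/p}$. The naive estimate obtained from the two-variable noise identity together with the boundedness of the Rademacher projection on $L_p(\{-1,1\}^{2n};\C)$ alone gives only $\|\sum_i\delta_i\partial_i e^{-t\Delta}g\|_{L_p}\lesssim (e^t-1)^{-1}\|g\|_{L_p}$, which has rate $t^{-1}$ as $t\to 0^+$ and is too weak to integrate against $t^{\alpha-1}$ once $\alpha<1$. Recovering the correct exponent $1/p$ requires a careful exploitation of the corner geometry of Weissler's domain $\Omega(r_p)$ at $w=1$, in the spirit of Szeg\H{o}'s inequality as used in Theorem \ref{thm:laplacianR}; this is the central analytic ingredient in Naor's argument, alluded to but not reproduced in the text.
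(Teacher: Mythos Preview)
Your overall architecture matches the paper's: write $f=\Delta^{-\alpha}g$ via the subordination integral with $\alpha=\tfrac{1}{p}+\e$, reduce to a pointwise-in-$t$ bound on $\|\nabla e^{-t\Delta}g\|_{L_p}$, and observe that a short-time rate of exactly $t^{-1/p}$ is what makes $\int_0^1 t^{\alpha-1-1/p}\,dt=1/\e$ converge with the claimed blow-up. This is precisely how the paper proceeds.

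The gap is in the short-time estimate itself, which you do not actually prove. You correctly note that the naive bound from the two-variable noise identity plus boundedness of $\msf{Rad}_\delta$ gives only $(e^t-1)^{-1}\asymp t^{-1}$, and you propose recovering the missing factor $t^{1-1/p}$ via a Cauchy contour in Weissler's lens domain $\Omega(r_p)$, ``in the spirit of Szeg\H{o}'s inequality.'' But this remains speculative: since $\mathrm{dist}(e^{-t},\partial\Omega(r_p))\asymp t\sin(\theta_p/2)$ is \emph{linear} in $t$, a direct Cauchy bound on a derivative again produces $t^{-1}$. Szeg\H{o}'s inequality, as used in Theorem~\ref{thm:laplacianR}, converts the corner angle into a fractional power of the \emph{degree}, not of the distance to the vertex; you have not explained how a fractional power of $t$ would emerge, and it is not clear that it does along these lines. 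It is also not obvious that the two-variable function $\sum_A\widehat{g}(A)\prod_{i\in A}(w\e_i+(1-w)\delta_i)$ remains bounded in $L_p(\{-1,1\}^{2n})$ for complex $w\in\Omega(r_p)$, which your contour argument would need.

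The paper's route to the short-time bound (its Lemma~\ref{lem:naor}) is far more elementary and avoids complex hypercontractivity entirely. One sets
\[
S_t(g)(\e,\delta)=\sum_{A}\widehat{g}(A)\prod_{i\in A}\bigl(e^{-t}\e_i+(1-e^{-t})\delta_i\bigr)-e^{-t\Delta}g(\e),
\]
i.e.\ the two-variable noise function with its $\delta$-constant part removed. Trivially $\|S_t\|_{L_1\to L_1}\le 2$ by the averaging representation, while a direct orthogonality computation on Walsh characters gives $\|S_t\|_{L_2\to L_2}\le\sqrt{1-e^{-t}}$. Riesz--Thorin interpolation then yields $\|S_t\|_{L_p\to L_p}\lesssim(1-e^{-t})^{1-1/p}$. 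Since the subtracted term is constant in $\delta$, one has $\msf{Rad}_\delta$ of the full two-variable function equal to $\msf{Rad}_\delta S_t(g)$, so the Rademacher projection bound plus Khintchine give
\[
\|\nabla e^{-t\Delta}g\|_{L_p}\ \lesssim_p\ \frac{1}{e^t-1}\,\|S_t(g)\|_{L_p}\ \lesssim_p\ \frac{(1-e^{-t})^{1-1/p}}{e^t-1}\,\|g\|_{L_p}\ \asymp\ (e^{pt}-1)^{-1/p}\,\|g\|_{L_p},
\]
valid uniformly for all $t>0$. This single estimate supplies both the $t^{-1/p}$ short-time rate and exponential long-time decay; your separate long-time argument via hypercontractivity and the $L_2$ Riesz transform is correct but unnecessary once this uniform bound is in hand.
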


For the proof of Proposition \ref{prop:naor} we will need the following lemma.

\begin{lemma} \label{lem:naor}
For every $p\in(1,2]$, there exists $A_p\in(0,\infty)$ such that for every $n\in\N$, every function $f:\{-1,1\}^n\to\C$ satisfies
\begin{equation} \label{eq:naor2}
\forall \ t>0, \ \ \ \| \nabla e^{-t\Delta} f\|_{L_p(\{-1,1\}^n;\C)} \leq \frac{A_p}{(e^{pt}-1)^{1/p}}\cdot \|f\|_{L_p(\{-1,1\}^n;\C)}.
\end{equation}
\end{lemma}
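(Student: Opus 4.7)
I would split into the case $p=2$, which is a direct Plancherel computation, and the case $p\in(1,2)$, which I would handle via complex interpolation using the $K$-convexity of $\mathbb{C}$.

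\textbf{The case $p=2$.} By Plancherel,
\[
\|\nabla e^{-t\Delta}f\|_2^2 \;=\; \sum_{A\neq\varnothing}|A|\,e^{-2t|A|}\,|\widehat f(A)|^2,
\]
so the bound reduces to the elementary claim that $k(e^{2t}-1)e^{-2tk}\leq 1$ for every integer $k\geq 1$ and every $t>0$. The continuous maximizer $x=1/(2t)$ of $x\mapsto x(e^{2t}-1)e^{-2tx}$ gives value $(e^{2t}-1)/(2et)$, which is always $\leq 1$, and at the boundary $k=1$ the expression is $1-e^{-2t}\leq 1$; no integer $k$ beats these. Thus the bound holds with $A_2=1$.

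\textbf{The case $p\in(1,2)$.} My strategy is to combine the $L_2$ estimate with a second $L_q$-endpoint estimate (for $q$ close to $1$) via Stein's complex interpolation theorem. For the $L_q$-endpoint I use Khintchine's inequality to reduce to
\[
\|\nabla e^{-t\Delta}f\|_q \;\approx_q\; \Big\|\sum_{i=1}^n \delta_i\,\partial_i e^{-t\Delta}f(\varepsilon)\Big\|_{L_q(\{-1,1\}^n\times\{-1,1\}^n;\mathbb{C})},
\]
followed by a direct expansion (as in the proof of Theorem \ref{thm:Pisierlowfreq}) giving the identity
\[
(e^t-1)\sum_{i=1}^n \varepsilon_i\delta_i\,\partial_i e^{-t\Delta}f(\varepsilon) \;=\; \mathsf{Rad}_\delta F_t(\varepsilon,\delta),
\]
where $F_t(\varepsilon,\delta):=\sum_A\widehat f(A)\prod_{i\in A}\bigl(e^{-t}\varepsilon_i+(1-e^{-t})\delta_i\bigr)$ satisfies $\|F_t\|_{L_q(\varepsilon,\delta)}\leq\|f\|_{L_q(\varepsilon)}$ by Jensen (since $F_t(\varepsilon,\delta)=\mathbb{E}_\zeta[f(\zeta)]$ for $\zeta_i\in\{\varepsilon_i,\delta_i\}$ chosen with probabilities $e^{-t}$ and $1-e^{-t}$). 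Pisier's $K$-convexity theorem applied to $\mathbb{C}$ then yields $\|\mathsf{Rad}_\delta F_t\|_{L_q(\varepsilon,\delta)}\leq K_q\|f\|_q$, producing the endpoint bound $\|\nabla e^{-t\Delta}f\|_q\leq C_q(e^t-1)^{-1}\|f\|_q$ for every $q\in(1,\infty)$. Interpolating this with the $L_2$ bound $(e^{2t}-1)^{-1/2}$, via an analytic family built from the complex hypercontractivity of $e^{-z\Delta}$ on $L_p(\mathbb{C})$ guaranteed by Theorems \ref{thm:pisKconv} and \ref{thm:weissler}, should yield the target bound $A_p(e^{pt}-1)^{-1/p}\|f\|_p$.

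\textbf{Main obstacle.} The sharp exponent $(e^{pt}-1)^{-1/p}$ corresponds to the endpoint $q\to 1^+$ in this interpolation, where the Rademacher-projection constant $K_q$ blows up. Plain Riesz--Thorin interpolation with any fixed $q>1$ gives an interpolated exponent $(1+\theta)/2$ with $\theta=(1/p-1/2)/(1/q-1/2)>2/p-1$, which exceeds the target $1/p$ and thus yields the wrong rate $t^{-(1+\theta)/2}$ at small $t$. This endpoint degeneration is circumvented by replacing Riesz--Thorin with Stein's \emph{analytic} interpolation: embedding $T_t=\nabla e^{-t\Delta}$ in a complex-analytic family parametrized by $\mathrm{Im}(z)$ (so that the effective $q$ varies along the boundary of the strip), and absorbing the blowup of $K_q$ into admissible polynomial growth on the imaginary axis, which is permitted by the complex analyticity of the semigroup in the Weissler--Ivanisvili--Nazarov domain $\Omega(r_p)$ from \eqref{eq:Omega(r)def}. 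Executing this analytic-interpolation scheme and verifying that the resulting exponent collapses to the desired $(e^{pt}-1)^{-1/p}$ is the main technical step of the proof.
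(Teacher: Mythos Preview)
Your $p=2$ case and your use of the two-variable expansion $F_t(\e,\delta)$ together with the identity $(e^t-1)\sum_i\e_i\delta_i\partial_ie^{-t\Delta}f=\msf{Rad}_\delta F_t$ are exactly the right ingredients. The gap is in how you interpolate. Bounding $\|\msf{Rad}_\delta F_t\|_q\le K_q\|F_t\|_q\le K_q\|f\|_q$ wastes the decay: it gives only $(e^t-1)^{-1}$ at every $q$, and as you correctly diagnose, no Riesz--Thorin between this and the $L_2$ bound $(e^{2t}-1)^{-1/2}$ can recover $(e^{pt}-1)^{-1/p}$. Your proposed repair via Stein's analytic interpolation is not spelled out, and I do not see how to build an analytic family here whose boundary behaviour would produce the correct exponent; in particular the complex hypercontractivity domains $\Omega(r_p)$ govern $w^\Delta$, not $\nabla e^{-t\Delta}$, and it is unclear what the ``varying $q$'' parameter would be.

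The paper's fix is simpler and avoids Stein interpolation entirely: interpolate the \emph{right} operator. Set $S_t(f)(\e,\delta)\eqdef F_t(\e,\delta)-e^{-t\Delta}f(\e)$, i.e.\ subtract off the $\delta$-constant part of $F_t$. This does not change $\msf{Rad}_\delta$, but it dramatically improves the $L_2$ operator norm: since $S_t(w_A)$ is orthogonal across $A$ and $\|S_t(w_A)\|_2^2=(e^{-2t}+(1-e^{-t})^2)^{|A|}-e^{-2t|A|}\le 1-e^{-t}$, one gets $\|S_t\|_{L_2\to L_2}\le\sqrt{1-e^{-t}}$, while trivially $\|S_t\|_{L_1\to L_1}\le 2$. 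Plain Riesz--Thorin then gives $\|S_t\|_{L_p\to L_p}\le 2(1-e^{-t})^{1-1/p}$, and a \emph{single} application of the Rademacher projection at the target $p$ yields
\[
\Big\|\sum_i\delta_i\partial_ie^{-t\Delta}f\Big\|_{L_p(\e,\delta)}\le\frac{K_p\,2(1-e^{-t})^{1-1/p}}{e^t-1}\|f\|_p=\frac{2K_p}{e^{t(1-1/p)}(e^t-1)^{1/p}}\|f\|_p\le\frac{4K_p}{(e^{pt}-1)^{1/p}}\|f\|_p.
\]
Khintchine then finishes. The missing idea is thus not a fancier interpolation theorem but the observation that subtracting the $\delta$-independent term before interpolating captures the extra factor $(1-e^{-t})^{1-1/p}$ you need.
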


\begin{proof}
For $t\geq0$, consider the operator\mbox{ $S_t:L_p(\{-1,1\}^n;\C) \to L_p(\{-1,1\}^n\times\{-1,1\}^n;\C)$ given by}
\begin{equation}
S_t(f)(\e,\delta) \eqdef \sum_{A\subseteq\{1,\ldots,n\}} \widehat{f}(A) \prod_{i\in A} \big(e^{-t}\e_i + (1-e^{-t})\delta_i\big) - e^{-t\Delta} f(\e),
\end{equation}
where $(\e,\delta)\in\{-1,1\}^n\times\{-1,1\}^n$. Notice that, because of \eqref{eq:twovarcontract1}, \eqref{eq:twovarcontract2} and the contractivity of the heat semigroup, we have
\begin{equation} \label{200}
\|S_t(f)\|_{L_1(\{-1,1\}^n\times\{-1,1\}^n;\C)} \leq 2\|f\|_{L_1(\{-1,1\}^n;\C)}.
\end{equation}
Furthermore, if $f=w_A$ for a subset $A\subseteq\{1,\ldots,n\}$,
\begin{equation}
S_t(w_A)(\e,\delta) = \sum_{B\subsetneq A} e^{-t|B|} (1-e^{-t})^{|A\setminus B|}w_B(\e) w_{A\setminus B}(\delta),
\end{equation}
which by orthogonality implies that
\begin{equation}
\begin{split}
\|S_t(w_A)\|^2_{L_2(\{-1,1\}^n\times\{-1,1\}^n;\C)} & = \sum_{k=0}^{|A|-1} \binom{|A|}{k} e^{-2tk}(1-e^{-t})^{2(|A|-k)}
\\ & = \big(e^{-2t}+(1-e^{-t})^2\big)^{|A|} - e^{-2t|A|} \leq 1-e^{-t},
\end{split}
\end{equation}
where the last inequality is elementary. Therefore,
\begin{equation} \label{203}
\begin{split}
\|S_t\|_{{L_2(\{-1,1\}^n;\C)}\to L_2(\{-1,1\}^n\times\{-1,1\}^n;\C)} = \max_{A\subseteq\{1,\ldots,n\}} \|S_t(w_A)\|&_{L_2(\{-1,1\}^n\times\{-1,1\}^n;\C)}\\ & \leq \sqrt{1-e^{-t}}.
\end{split}
\end{equation}
Using the Riesz--Thorin interpolation theorem, we conclude that for every $p\in[1,2]$,
\begin{equation} \label{eq:gotdecay}
\|S_t\|_{{L_p(\{-1,1\}^n;\C)}\to L_p(\{-1,1\}^n\times\{-1,1\}^n;\C)} \stackrel{\eqref{200}\wedge\eqref{203}}{\leq} 2^{\frac{2}{p}-1} (1-e^{-t})^{1-\frac{1}{p}} \leq 2(1-e^{-t})^{1-\frac{1}{p}}
\end{equation}
and, by \eqref{eq:takeradproj}, we get
\begin{equation} \label{eq:almostthere!}
\begin{split}
\Big(\frac{1}{2^n}& \sum_{\delta\in\{-1,1\}^n} \Big\| \sum_{i=1}^n \delta_i  \partial_i e^{-t\Delta} f\Big\|_{L_p(\{-1,1\}^n;\C)}^p\Big)^{1/p} \stackrel{\eqref{eq:takeradproj}}{=} \frac{1}{e^t-1} \big\|\mathrm{Rad}_\delta S_t(f)\big\|_{L_p(\{-1,1\}^n\times\{-1,1\}^n;\C)}
\\& \leq \frac{K_p}{e^t-1} \|S_t(f)\|_{L_p(\{-1,1\}^n\times\{-1,1\}^n;\C)} \stackrel{\eqref{eq:gotdecay}}{\leq} \frac{2K_p\|f\|_{L_p(\{-1,1\}^n;\C)}}{e^{t(1-\frac{1}{p})}(e^t-1)^{\frac{1}{p}}} \leq \frac{4K_p\|f\|_{L_p(\{-1,1\}^n;\C)}}{(e^{pt}-1)^{\frac{1}{p}}},
\end{split}
\end{equation}
where $K_p=\sup_{n\in\N} \|\mathrm{Rad}\|_{L_p(\{-1,1\}^n;\C)\to L_p(\{-1,1\}^n;\C)}$. Finally, by Khintchine's inequality,
\begin{equation}
\| \nabla e^{-t\Delta} f\|_{L_p(\{-1,1\}^n;\C)} \leq B_p \Big(\frac{1}{2^n} \sum_{\delta\in\{-1,1\}^n} \Big\| \sum_{i=1}^n \delta_i  \partial_i e^{-t\Delta} f\Big\|_{L_p(\{-1,1\}^n;\C)}^p\Big)^{1/p}
\end{equation}
for some $B_p\in[1,\sqrt{2}]$, which combined with \eqref{eq:almostthere!} completes the proof.
\end{proof}

\smallskip

\noindent {\it Proof of Proposition \ref{prop:naor}.}
Fix $\e\in\big(0,\frac{1}{2}\big)$, $n\in\N$, $d\in\{1,\ldots,n\}$ and let $f:\{-1,1\}^n\to\C$. A change of variables gives the integral representation
\begin{equation} \label{eq:irep}
\Delta^{-1/p-\e} f = \frac{1}{\Gamma\big(\frac{1}{p}+\e\big)} \int_0^\infty t^{\frac{1}{p}+\e-1} e^{-t\Delta} f \diff t.
\end{equation}
Therefore, we can write
\begin{equation} \label{eq:userepre}
\begin{split}
\|\nabla \Delta^{-1/p-\e}f\|_{L_p(\{-1,1\}^n;\C)}& \stackrel{\eqref{eq:irep}}{=} \frac{1}{\Gamma\big(\frac{1}{p}+\e\big)} \Big\| \nabla \Big( \int_0^\infty t^{\frac{1}{p}+\e-1} e^{-t\Delta} f \diff t\Big) \Big\|_{L_p(\{-1,1\}^n;\C)}
\\ & \leq \frac{1}{\Gamma\big(\frac{1}{p}+\e\big)} \int_0^\infty t^{\frac{1}{p}+\e-1} \|\nabla e^{-t\Delta} f\|_{L_p(\{-1,1\}^n;\C)} \diff t
\\ & \stackrel{\eqref{eq:naor2}}{\leq} \frac{A_p}{\Gamma(1/2)} \Big( \int_0^\infty \frac{t^{\frac{1}{p}+\e-1}}{(e^{pt}-1)^{1/p}} \diff t \Big) \|f\|_{L_p(\{-1,1\}^n;\C)}.
\end{split}
\end{equation}
To conclude the proof, notice that
\begin{equation}
\int_0^\infty \frac{t^{\frac{1}{p}+\e-1}}{(e^{pt}-1)^{1/p}} \diff t  = \int_0^1 \frac{t^{\frac{1}{p}+\e-1}}{(e^{pt}-1)^{1/p}} \diff t  + \int_1^\infty \frac{t^{\frac{1}{p}+\e-1}}{(e^{pt}-1)^{1/p}} \diff t  = O(1/\e) + O(1)= O(1/\e)
\end{equation}
and thus \eqref{eq:userepre} becomes
\begin{equation}
\|\nabla \Delta^{-1/p-\e}f\|_{L_p(\{-1,1\}^n;\C)} \leq \frac{C_p}{\e}  \|f\|_{L_p(\{-1,1\}^n;\C)},
\end{equation}
which is equivalent to \eqref{eq:naor}.
\hfill$\Box$ 

\begin{remark}
Combining \eqref{eq:naor} with \eqref{eq:naos} and the estimate $\|\Delta f\|_{L_p(\{-1,1\}^n;\C)} \leq n\|f\|_{L_p(\{-1,1\}^n;\C)}$ which holds true for every function $f:\{-1,1\}^n\to\C$, we get 
\begin{equation}
\|\nabla f\|_{L_p(\{-1,1\}^n;\C)} \leq \frac{4C_p n^{\e}}{\e} \|\Delta^{1/p}f\|_{L_p(\{-1,1\}^n;\C)},
\end{equation}
which for $\e=\frac{1}{\log (n+1)}$ becomes
\begin{equation} \label{eq:almostriesz}
\|\nabla f\|_{L_p(\{-1,1\}^n;\C)} \leq 4eC_p \log (n+1) \|\Delta^{1/p}f\|_{L_p(\{-1,1\}^n;\C)}.
\end{equation}
In the upcoming manuscript \cite{EN20}, the first named author and Naor use a new Littlewood--Paley--Stein inequality \cite{Ste70} on the discrete hypercube, which allows to improve the logarithmic term in \eqref{eq:almostriesz} to $(\log n)^{c_p}$ for some $c_p\in(0,1)$, where $p\in(1,2)$. We conjecture that \eqref{eq:almostriesz} holds true with a dimension independent constant.
\end{remark}

\noindent {\it Proof of Theorem \ref{thm:gradR} for $p\in(1,2)$.}
Fix $\e\in(0,1)$, $n\in\N$, $d\in\{1,\ldots,n\}$ and let $f:\{-1,1\}^n\to \C$ be a function of degree at most $d$. Combining \eqref{eq:naor}, \eqref{eq:naos} for $\beta=\frac{1}{p}+\e$ and \eqref{eq:gooddomain}, we deduce that
\begin{equation}
\|\nabla f\|_{L_p(\{-1,1\}^n;\C)} \leq \frac{40 C_p}{\e} d^{(2-\frac{\theta_p}{\pi})(\frac{1}{p}+\e)} \|f\|_{L_p(\{-1,1\}^n;\C)}.
\end{equation}
Choosing $\e=\frac{1}{\log (d+1)}$ gives the conclusion.
\hfill$\Box$

\medskip

The next proposition is an asymptotically sharp endpoint Bernstein--Markov inequality for the discrete gradient and $p=\infty$.

\begin{proposition} \label{prop:gradp=infty}
For every $n,d\in\N$ with $d\in\{1,\ldots,n\}$ and every function $f:\{-1,1\}^n\to\C$,
\begin{equation} \label{eq:gradp=infty}
\|\nabla f\|_{L_\infty(\{-1,1\}^n;\C)} \leq 2d \|f\|_{L_\infty(\{-1,1\}^n;\C)}.
\end{equation}
Furthermore, the factor $d$ is asymptotically optimal.
\end{proposition}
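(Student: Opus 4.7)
The plan is to bound $|\nabla f(\e)|_2$ pointwise in $\e \in \{-1,1\}^n$ by combining an estimate for partial Laplacians with a sign-decomposition of the gradient, assuming (as fits the context of the paper and is necessary for the estimate -- e.g.\ $f = \e_1\cdots\e_5$ violates the inequality for $d=1$) that $f$ has degree at most $d$.

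The central ingredient is the following partial-Laplacian analogue of Theorem \ref{thm:laplaciangeneral}: for every $S \subseteq \{1,\ldots,n\}$ and every $f:\{-1,1\}^n\to\C$ of degree at most $d$,
\begin{equation} \label{eq:partialLap}
\|\Delta_S f\|_{L_\infty(\{-1,1\}^n;\C)} \leq d^2 \|f\|_{L_\infty(\{-1,1\}^n;\C)}, \qquad \Delta_S f \eqdef \sum_{k\in S} \partial_k f.
\end{equation}
The proof is a direct adaptation of Theorem \ref{thm:laplaciangeneral}. The operator $P_S(x) g \eqdef \sum_A x^{|A\cap S|}\widehat{g}(A) w_A$ is a contraction on $L_\infty$ for $x \in [0,1]$ -- it realizes the heat semigroup acting independently on the coordinates in $S$ with flip probability $(1-x)/2$, so $P_S(x)g(\e) = \E[g(\xi)]$ for a suitable random vector $\xi$; contractivity extends to $x \in [-1,0]$ via the identity $P_S(-x) = \tau_S \circ P_S(x)$, where $\tau_S$ flips every coordinate in $S$. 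For each fixed $\e$, the map $x \mapsto P_S(x)f(\e)$ is a polynomial in $x$ of degree at most $d$ bounded by $\|f\|_\infty$ on $[-1,1]$; Markov's inequality \eqref{eq:markov} at $x = 1$ yields $|P_S'(1)f(\e)| \leq d^2\|f\|_\infty$, which is \eqref{eq:partialLap} since $P_S'(1)f = \Delta_S f$.

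Granting \eqref{eq:partialLap}, fix $\e \in \{-1,1\}^n$ and write $\partial_k f(\e) = \alpha_k + i\beta_k$ with $\alpha_k, \beta_k \in \R$. Applying \eqref{eq:partialLap} with $S = \{k : \alpha_k \geq 0\}$ and taking real parts gives $\sum_k \alpha_k^+ = \mathrm{Re}\,\Delta_S f(\e) \leq d^2\|f\|_\infty$; the complementary set yields $\sum_k \alpha_k^- \leq d^2\|f\|_\infty$, hence $\sum_k |\alpha_k| \leq 2d^2\|f\|_\infty$, and an analogous argument with imaginary parts gives $\sum_k |\beta_k| \leq 2d^2\|f\|_\infty$. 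Combined with the trivial pointwise bound $|\alpha_k|, |\beta_k| \leq |\partial_k f(\e)| \leq \|f\|_\infty$, this yields
\begin{equation*}
|\nabla f(\e)|_2^2 = \sum_k \alpha_k^2 + \sum_k \beta_k^2 \leq \|f\|_\infty \sum_k |\alpha_k| + \|f\|_\infty \sum_k |\beta_k| \leq 4 d^2 \|f\|_\infty^2,
\end{equation*}
establishing \eqref{eq:gradp=infty}. For asymptotic optimality I would use the Chebyshev-type example $f(\e) = T_d\big((\e_1+\cdots+\e_n)/n\big)$ with $n$ of order $d^2$: at $\e = (1,\ldots,1)$ each partial derivative equals $\tfrac{1}{2}(1 - T_d(1-2/n))$, which is of constant order by the identity $T_d(\cos\theta) = \cos(d\theta)$, so $|\nabla f(\e)|_2 = \Theta(\sqrt{n}) = \Theta(d)$ while $\|f\|_\infty \leq 1$.

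The only substantive step is the partial-Laplacian estimate \eqref{eq:partialLap}, but its proof is a verbatim adaptation of the argument for Theorem \ref{thm:laplaciangeneral} with the standard semigroup replaced by its partial version $P_S(x)$, so I anticipate no real obstacle.
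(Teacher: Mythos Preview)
Your proof is correct and takes a genuinely different route from the paper's.

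The paper proceeds via the semigroup estimate
\[
\|\nabla e^{-t\Delta} f\|_{L_\infty} \le \frac{1}{\sqrt{e^{2t}-1}}\,\|f\|_{L_\infty},
\]
proved through the Bakry--Ledoux $\Gamma$-calculus identity \eqref{eq:useinnextremark}, and then applies the reverse heat bound of Theorem~\ref{thm:reverseheatgeneral} to undo the smoothing. Optimising $t=1/d^2$ yields the constant $\tfrac{e}{\sqrt{2}}d<2d$.

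Your argument is more elementary and entirely pointwise: you replace the full heat semigroup by its partial version $P_S(x)$ acting only on the coordinates in $S$, observe that $x\mapsto P_S(x)f(\e)$ is still a polynomial of degree $\le d$ bounded by $\|f\|_\infty$ on $[-1,1]$, and apply Markov's inequality to obtain $|\Delta_S f(\e)|\le d^2\|f\|_\infty$ uniformly in $S$. The sign-splitting of real and imaginary parts, together with the trivial bound $|\partial_k f(\e)|\le\|f\|_\infty$, then gives $|\nabla f(\e)|_2\le 2d\|f\|_\infty$. This avoids all semigroup machinery and the BGL lemma; the price is a slightly worse numerical constant ($2$ versus $e/\sqrt{2}\approx 1.92$), though both establish the stated bound. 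Your optimality argument via the Chebyshev example coincides with the paper's. A small bonus of your approach is that it actually proves the stronger $\ell_1$-type estimate $\sum_k |\mathrm{Re}\,\partial_k f(\e)|\le 2d^2\|f\|_\infty$ (and similarly for imaginary parts), which the semigroup argument does not immediately yield.
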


We will need the following lemma, which is a special case of a more general semigroup statement from \cite[Proposition~8.6.1]{BGL14} and is the $L_\infty$ analogue of Lemma \ref{lem:naor}.

\begin{lemma} \label{lem:bgl}
For every $n\in\N$ and every function $f:\{-1,1\}^n\to\C$, we have
\begin{equation}
\forall \ t>0,  \ \ \ \|\nabla e^{-t\Delta} f\|_{L_\infty(\{-1,1\}^n;\C)} \leq \frac{1}{\sqrt{e^{2t}-1}} \|f\|_{L_\infty(\{-1,1\}^n;\C)}.
\end{equation}
\end{lemma}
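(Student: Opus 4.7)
The plan is to adapt the Bakry--\'Emery semigroup argument to the discrete cube in three steps: first, establish the pointwise forward gradient bound $\sum_{i=1}^n |\partial_i e^{-t\Delta}f|^2 \leq e^{-2t}\,e^{-t\Delta}\big(\sum_{i=1}^n |\partial_i f|^2\big)$; second, derive from it the reverse local Poincar\'e inequality $e^{-t\Delta}(|f|^2) - |e^{-t\Delta}f|^2 \geq (e^{2t}-1)\sum_i |\partial_i e^{-t\Delta}f|^2$; third, invoke $L_\infty$-contractivity to conclude. The substantive step is the first; the other two are a routine transcription of the classical continuous-space derivation.

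For Step~1, the key point is the factorization $e^{-t\Delta} = \prod_{j=1}^n P_t^{(j)}$ into commuting one-coordinate operators $P_t^{(j)} = e^{-t}I + (1-e^{-t})M_j$, where $M_j$ denotes the averaging operator in coordinate $j$, namely $M_j f(\e) = \tfrac{1}{2}\big(f(\e)+f(\e_1,\ldots,-\e_j,\ldots,\e_n)\big)$. Since $\partial_j = I - M_j$ and $M_j^2 = M_j$, a direct computation gives $P_t^{(j)}\partial_j = e^{-t}\partial_j$. Writing $\tilde P_t^{(i)} := \prod_{j \neq i} P_t^{(j)}$, this yields
\[
\partial_i e^{-t\Delta}f \;=\; e^{-t\Delta}\partial_i f \;=\; e^{-t}\,\tilde P_t^{(i)}\partial_i f.
\]
Because $\partial_i f$ is antisymmetric in coordinate $i$, the function $|\partial_i f|^2$ does not depend on $\e_i$, so Jensen's inequality applied to the positive averaging operator $\tilde P_t^{(i)}$ gives $|\partial_i e^{-t\Delta}f|^2 \leq e^{-2t}\,\tilde P_t^{(i)}(|\partial_i f|^2) = e^{-2t}\,e^{-t\Delta}(|\partial_i f|^2)$. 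Summing over $i$ completes Step~1.

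For Step~2, set $u(s) := e^{-s\Delta}\big(|e^{-(t-s)\Delta}f|^2\big)$ for $s \in [0,t]$. A direct calculation on the cube yields the carr\'e-du-champ identity $\Delta(|g|^2) - \bar g\,\Delta g - g\,\Delta\bar g = -2\sum_i |\partial_i g|^2$, from which $u'(s) = 2\,e^{-s\Delta}\big(\sum_i |\partial_i e^{-(t-s)\Delta}f|^2\big)$. Applying Step~1 to the function $e^{-(t-s)\Delta}f$ with time parameter $s$ gives $e^{-s\Delta}\big(\sum_i |\partial_i e^{-(t-s)\Delta}f|^2\big) \geq e^{2s}\sum_i |\partial_i e^{-t\Delta}f|^2$, and integrating over $s \in [0,t]$ produces $e^{-t\Delta}(|f|^2) - |e^{-t\Delta}f|^2 \geq (e^{2t}-1)\sum_i |\partial_i e^{-t\Delta}f|^2$. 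For Step~3, the $L_\infty$-contractivity of $e^{-t\Delta}$ gives $e^{-t\Delta}(|f|^2)(\e) \leq \|f\|_{L_\infty}^2$ pointwise, whence $(e^{2t}-1)\sum_i |\partial_i e^{-t\Delta}f(\e)|^2 \leq \|f\|_{L_\infty}^2$; taking square roots and the supremum over $\e$ finishes the proof. The only potentially tricky point is Step~1, which hinges on the specific product structure of the cube heat semigroup and the eigenrelation $P_t^{(j)}\partial_j = e^{-t}\partial_j$; once it is in hand, one does not need any abstract $CD(1,\infty)$ input.
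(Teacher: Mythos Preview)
Your proof is correct and follows essentially the same Bakry--\'Emery interpolation argument as the paper: both differentiate $u(s)=e^{-s\Delta}\big(|e^{-(t-s)\Delta}f|^2\big)$, use the carr\'e-du-champ identity, apply Jensen to the semigroup to obtain $e^{-s\Delta}\big(\sum_i|\partial_i e^{-(t-s)\Delta}f|^2\big)\geq e^{2s}\sum_i|\partial_i e^{-t\Delta}f|^2$, and integrate. The only cosmetic difference is that you package the Jensen step as a standalone forward gradient bound (your Step~1) derived from the product factorization $e^{-t\Delta}=\prod_j P_t^{(j)}$, whereas the paper derives the same inequality via the Fourier-side intertwining $e^{s}\e_i\partial_i e^{-t\Delta}f = e^{-s\Delta}\big(\e_i\partial_i e^{-(t-s)\Delta}f\big)$.
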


\begin{proof}
Fix $n\in\N$ and consider a function $f:\{-1,1\}^n\to\C$. Then, we have the pointwise identity
\begin{equation} \label{eq:bglident}
(e^{2t}-1) \sum_{i=1}^n (\partial_i e^{-t\Delta}f)^2 = 2\int_0^t e^{2s} \sum_{i=1}^n (\partial_i e^{-t\Delta}f)^2 \diff s.
\end{equation}
Denote by $g_i(\e) = \e_i \partial_i e^{-(t-s)\Delta}f(\e)$ and notice that
\begin{equation}
\forall \ \e\in\{-1,1\}^n, \ \ \ \e_i e^s \partial_i e^{-t\Delta} f(\e) = e^{-s\Delta} g_i(\e).
\end{equation}
Therefore, \eqref{eq:bglident} implies that
\begin{equation}
\begin{split} \label{eq:bglineq}
(e^{2t}-1) \sum_{i=1}^n (\partial_i e^{-t\Delta}f)^2 & = 2\int_0^t  \sum_{i=1}^n (e^{-s\Delta}g_i)^2 \diff s \\ & \stackrel{(\dagger)}{\leq} 2\int_0^t e^{-s\Delta}\Big( \sum_{i=1}^n g_i^2 \Big) \diff s
 = 2\int_0^t e^{-s\Delta} \Big( \sum_{i=1}^n (\partial_i e^{-(t-s)\Delta}f)^2\Big) \diff s,
\end{split}
\end{equation}
where inequality $(\dagger)$ follows from Jensen's inequality since $e^{-s\Delta}$ is an averaging operator. Using the definition of the Laplacian, one can check that for every function $h:\{-1,1\}^n\to\C$ the identity
\begin{equation} \label{eq:byparts}
\forall \ \e\in\{-1,1\}^n, \ \ \ 2h(\e) \cdot \Delta h(\e) - \Delta h^2(\e) = 2\sum_{i=1}^n (\partial_i h(\e))^2
\end{equation}
holds true. Combining \eqref{eq:bglineq} and \eqref{eq:byparts}, we conclude that
\begin{equation} \label{eq:bglineq2}
(e^{2t}-1) \sum_{i=1}^n (\partial_i e^{-t\Delta}f)^2  \leq \int_0^t  e^{-s\Delta}\big( 2e^{-(t-s)\Delta}f \cdot \Delta e^{-(t-s)\Delta}f - \Delta (e^{-(t-s)\Delta}f)^2 \big) \diff s.
\end{equation}
However, we also have
\begin{equation}
\frac{\diff}{\diff s} e^{-s\Delta}\big( e^{-(t-s)\Delta} f \big)^2 = e^{-s\Delta} \big( 2e^{-(t-s)\Delta}f\cdot \Delta e^{-(t-s)\Delta}f - \Delta (e^{-(t-s)\Delta}f)^2 \big),
\end{equation}
which implies that \eqref{eq:bglineq2} can be rewritten as
\begin{equation} \label{eq:useinnextremark}
(e^{2t}-1) \sum_{i=1}^n (\partial_i e^{-t\Delta}f)^2  \leq \int_0^t \frac{\diff}{\diff s} e^{-s\Delta}\big( e^{-(t-s)\Delta} f\big)^2 \diff s = e^{-t\Delta} f^2 - (e^{-t\Delta} f)^2.
\end{equation}
Therefore,
\begin{equation}
\sqrt{e^{2t}-1} \|\nabla e^{-t\Delta} f\|_{L_\infty(\{-1,1\}^n;\C)} \leq \|e^{-t\Delta} f^2\|_{L_\infty(\{-1,1\}^n;\C)}^{1/2} \leq \|f\|_{L_\infty(\{-1,1\}^n;\C)},
\end{equation}
which completes the proof of the lemma.
\end{proof}

\begin{remark}
In fact, it follows from \eqref{eq:useinnextremark} that for every scalar valued function $f:\{-1,1\}^n\to\C$ and $p\in[2,\infty]$, we have the inequality
\begin{equation}
\forall \ t>0,  \ \ \ \|\nabla e^{-t\Delta} f\|_{L_p(\{-1,1\}^n;\C)} \leq \frac{1}{\sqrt{e^{2t}-1}} \|f\|_{L_p(\{-1,1\}^n;\C)}.
\end{equation}
Combined with Lemma \ref{lem:naor}, these estimates provide the sharp dimension free decay of the operator norm of $\nabla e^{-t\Delta}$ from $L_p(\{-1,1\}^n;\C)$ to itself for every $p\in(1,\infty]$.
\end{remark}

\medskip

\noindent {\it Proof of Proposition \ref{prop:gradp=infty}.}
Fix $n\in\N$, $d\in\{1,\ldots,n\}$ and let $f:\{-1,1\}^n\to \C$ be a function of degree at most $d$. Then, by Lemma \ref{lem:bgl}, Theorem \ref{thm:reverseheatgeneral} and the estimate $T_d(e^t)\leq e^{td^2}$, we have
\begin{equation}
\|\nabla f\|_{L_\infty(\{-1,1\}^n;\C)} \leq \frac{1}{\sqrt{e^{2t}-1}} \|e^{t\Delta}f\|_{L_\infty(\{-1,1\}^n;\C)} \leq \frac{e^{td^2}}{\sqrt{e^{2t}-1}} \|f\|_{L_\infty(\{-1,1\}^n;\C)}.
\end{equation}
Plugging $t=\frac{1}{d^2}$ and using that $\sqrt{e^{2t}-1} \geq \sqrt{2t}$, we thus get
\begin{equation}
\|\nabla f\|_{L_\infty(\{-1,1\}^n;\C)} \leq \frac{ed}{\sqrt{2}} \|f\|_{L_\infty(\{-1,1\}^n;\C)} < 2d\|f\|_{L_\infty(\{-1,1\}^n;\C)}.
\end{equation}
To prove that a $O(d)$ factor is necessary, consider the function $f:\{-1,1\}^n\to\C$ given by
\begin{equation}
\forall \ \e=(\e_1,\ldots,\e_n)\in\{-1,1\}^n, \ \ \ f(\e) \eqdef T_d\Big(\frac{\e_1+\cdots+\e_n}{n}\Big),
\end{equation}
where $T_d(x)$ is the $d$-th Chebyshev polynomial of the first kind. Clearly $f$ has degree at most $d$ and furthermore
\begin{equation}
\|f\|_{L_\infty(\{-1,1\}^n;\C)} = \max_{\e\in\{-1,1\}^n} \Big| T_d\Big(\frac{\e_1+\cdots+\e_n}{n}\Big)\Big| = T_d(1)=1.
\end{equation}
On the other hand,
\begin{equation}
\|\nabla f\|_{L_\infty(\{-1,1\}^n;\C)} \geq \Big( \sum_{i=1}^n (\partial_if(1,\ldots,1))^2\Big)^{1/2}
\end{equation}
and for every $i\in\{1,\ldots,n\}$,
\begin{equation}
\partial_if(1,\ldots,1) = \frac{1}{2}\Big(1-T_d\Big(1-\frac{2}{n}\Big)\Big).
\end{equation}
Therefore,
\begin{equation}
\|\nabla f\|_{L_\infty(\{-1,1\}^n;\C)} \geq \frac{\sqrt{n}}{2} \Big(1-T_d\Big(1-\frac{2}{n}\Big)\Big),
\end{equation}
which for $n=d^2$ becomes
\begin{equation} \label{eq:inft1}
\|\nabla f\|_{L_\infty(\{-1,1\}^n;\C)} \geq \frac{d}{2} \Big(1-T_d\Big(1-\frac{2}{d^2}\Big)\Big).
\end{equation}
Finally notice that
\begin{equation} \label{eq:inft2}
1-T_d\Big(1-\frac{2}{d^2}\Big) = 1 - \cos\Big(d\arccos\Big(1-\frac{2}{d^2}\Big)\Big) \geq 1
\end{equation}
for large enough values of $d$, since
\begin{equation}
\lim_{d\to\infty} 1 - \cos\Big(d\arccos\Big(1-\frac{2}{d^2}\Big)\Big) = 1-\cos(2).
\end{equation}
The asymptotic optimality of \eqref{eq:gradp=infty} follows from \eqref{eq:inft1} and \eqref{eq:inft2}.
\hfill$\Box$

\medskip

\begin{remark} \label{rem:gradp=1}
Using Theorem \ref{thm:momentcomp1}, one can also derive an endpoint Bernstein--Markov inequality for $p=1$. Indeed, using Jensen's inequality and orthogonality, for every $n,d\in\N$ with $d\in\{1,\ldots,n\}$, every function $f:\{-1,1\}^n\to\C$ satisfies
\begin{equation}
\|\nabla f\|_{L_1(\{-1,1\}^n;\C)} \leq \|\nabla f\|_{L_2(\{-1,1\}^n;\C)} \leq \sqrt{d} \|f\|_{L_2(\{-1,1\}^n;\C)}
\end{equation}
and invoking Theorem \ref{thm:momentcomp1} we conclude that
\begin{equation}
\|\nabla f\|_{L_1(\{-1,1\}^n;\C)} \leq (2.69076)^d\sqrt{d} \|f\|_{L_1(\{-1,1\}^n;\C)}.
\end{equation}
It would be interesting if one could obtain a polynomial bound in this inequality.
\end{remark}

\begin{remark}
In our recent paper \cite{EI18}, we investigated Bernstein--Markov inequalities in the spirit of Theorem \ref{thm:gradR} for polynomials on $\R^n$ equipped with the Gaussian measure. The main tools we used were Epperson's Gaussian complex hypercontractivity \cite{Epp89} (see also \cite{Jan97}) and Meyer's Gaussian Riesz transform inequalities \cite{Mey84}. Even though in \cite{EI18} we did not consider vector valued inequalities, we note that all the results of Section \ref{sec:3} also hold true in the Gaussian setting by the holomorphicity of the Ornstein--Uhlenbeck semigroup $e^{-tL}$ on $K$-convex targets which follows from Pisier's $K$-convexity theorem \cite{Pis82}. Moreover, it has been shown by Pisier \cite{Pis88} that for every UMD Banach space $(X,\|\cdot\|_X)$ and $p\in(1,\infty)$ there exist $c(p,X),C(p,X)\in(0,\infty)$ such that for every $n\in\N$ and every function $f:\R^n\to X$, we have
\begin{equation}
\begin{split}
c(p,X)\|L^{1/2}f\|_{L_p((\R^n,\gamma_n);X)} \leq \Big( \int_{\R^n} \Big\|\sum_{i=1}^n y_i\partial_i f\Big\|&_{L_p((\R^n,\gamma_n);X)} \diff\gamma_n(y)\Big)^{1/p}
\\ & \leq C(p,X) \|L^{1/2}f\|_{L_p((\R^n,\gamma_n);X)}.
\end{split}
\end{equation}
Therefore, since UMD spaces have non-trivial type (e.g. by \cite{Pis73}), we then conclude that for every $p\in(1,\infty)$ there exist $\beta=\beta(p,X)\in[0,1)$ and $K=K(p,X)\in(0,\infty)$ such that if $P$ is a polynomial on $\R^n$ of degree at most $d$, then
\begin{equation} \label{eq:freud}
\|\nabla P\|_{L_p((\R^n,\gamma_n);X)} \eqdef \Big( \int_{\R^n} \Big\|\sum_{i=1}^n y_i\partial_i f\Big\|_{L_p((\R^n,\gamma_n);X)} \diff\gamma_n(y)\Big)^{1/p} \leq K d^{\beta} \|P\|_{L_p((\R^n,\gamma_n);X)}.
\end{equation}
Such a result for UMD-valued functions on the Hamming cube is unknown. Inequality \eqref{eq:freud} is a high dimensional vector valued analogue of Freud's inequality \cite{Fre71}, who showed \eqref{eq:freud} for $X=\C$ and $n=1$ with the optimal exponent $\beta=\frac{1}{2}$ for every $p\in[1,\infty)$. To the best of our knowledge no vector valued versions of Freud's inequality (even for $n=1$) were available in the literature.
\end{remark}

We will now prove Corollary \ref{cor:influences}, the improved estimate on the influences of bounded functions. We will need the following lemma, whose proof relies on an inequality of Sarantopoulos \cite{Sar91}.

\begin{lemma} \label{lem:usesarantopoulos}
Let $n\in\N$, $d\in\{1,\ldots,n\}$ and $p\in[1,\infty)$. Then, for every function $f:\{-1,1\}^n\to\C$ of degree at most $d$, we have
\begin{equation}
\mathrm{Inf}^{(p)} f \leq \inf_{t>0} \left( \sup_{\mathrm{deg}(g)\leq d} \frac{\|g\|_{L_p(\{-1,1\}^n;\C)}^p}{\|e^{-t\Delta}g\|_{L_p(\{-1,1\}^n;\C)}^p} \right) \frac{d}{\sqrt{e^{2t}-1}} \|f\|_{L_\infty(\{-1,1\}^n;\C)}^p.
\end{equation}
\end{lemma}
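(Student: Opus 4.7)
The plan is to carry out the proof in three steps, with the centerpiece being a pointwise Sarantopoulos-type estimate for the discrete $\ell_1$-gradient of the heat-smoothed function $e^{-t\Delta}f$.

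First I will establish that, for every $\e\in\{-1,1\}^n$ and $t>0$,
\begin{equation*}
\sum_{i=1}^n |\partial_i e^{-t\Delta}f(\e)| \leq \frac{d}{\sqrt{e^{2t}-1}}\|f\|_{L_\infty(\{-1,1\}^n;\C)}.
\end{equation*}
To this end I pass to the multilinear extension $\tilde f\colon\R^n\to\C$ of $f$, a polynomial of total degree at most $d$. Since the modulus of a complex-affine function of a real variable is convex, iterating in one variable at a time gives $\sup_{[-1,1]^n}|\tilde f|=\max_{\e\in\{-1,1\}^n}|\tilde f(\e)|=\|f\|_{L_\infty}$. Sarantopoulos's inequality applied to $\tilde f$ on the real Banach space $\ell_\infty^n$ then yields
\begin{equation*}
\|\nabla \tilde f(x)\|_{\ell_1^n}\leq \frac{d}{\sqrt{1-\|x\|_\infty^2}}\|f\|_{L_\infty}, \qquad x\in\R^n,\ \|x\|_\infty<1.
\end{equation*}
Evaluating at $x=e^{-t}\e$ and using the identity $\partial_{z_i}\tilde f(e^{-t}\e)=e^t\e_i\,\partial_i e^{-t\Delta}f(\e)$ (immediate from $\e_i^2=1$) gives the desired pointwise bound after rearrangement.

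Next, the trivial estimate $|\partial_i e^{-t\Delta}f(\e)|\leq \|e^{-t\Delta}f\|_{L_\infty}\leq \|f\|_{L_\infty}$ combined with the previous step gives
\begin{equation*}
\sum_{i=1}^n |\partial_i e^{-t\Delta}f(\e)|^p \leq \|f\|_{L_\infty}^{p-1}\sum_{i=1}^n|\partial_i e^{-t\Delta}f(\e)|\leq \frac{d}{\sqrt{e^{2t}-1}}\|f\|_{L_\infty}^p,
\end{equation*}
and averaging over $\e\in\{-1,1\}^n$ bounds $\mathrm{Inf}^{(p)}(e^{-t\Delta}f)$ by $d(e^{2t}-1)^{-1/2}\|f\|_{L_\infty}^p$. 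Since each $\partial_i f$ has degree at most $d$, the definition of the supremum ratio applied to $g=\partial_i f$ gives $\|\partial_i f\|_{L_p}^p\leq \bigl(\sup_{\deg(g)\leq d}\|g\|_{L_p}^p/\|e^{-t\Delta}g\|_{L_p}^p\bigr)\|\partial_i e^{-t\Delta}f\|_{L_p}^p$; summing over $i$ and taking the infimum over $t>0$ completes the proof.

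The hard part will be invoking Sarantopoulos's inequality in its sharp $(1-\|x\|_\infty^2)^{-1/2}$ form, rather than the cruder $(1-\|x\|_\infty)^{-1}$ estimate one gets from applying one-variable Bernstein on a secant of $[-1,1]^n$. The sharp form is what produces the asymptotic factor $d/\sqrt{e^{2t}-1}$ required so that the eventual optimization in $t$ recovers the exponent $2-\pi^{-1}\arcsin(2\sqrt{p-1}/p)$ of Corollary \ref{cor:influences}; the remaining manipulations are standard H\"older-type estimates and the definition of the supremum ratio.
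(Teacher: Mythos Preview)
Your proof is correct and follows essentially the same strategy as the paper: pass to the multilinear extension, apply Sarantopoulos's Bernstein inequality on $\ell_\infty^n$ at the point $e^{-t}\e$ to bound the $\ell_1$-gradient of $e^{-t\Delta}f$ pointwise by $d(e^{2t}-1)^{-1/2}\|f\|_{L_\infty}$, use the trivial bound $|\partial_i e^{-t\Delta}f|\leq\|f\|_{L_\infty}$ to pass from $\ell_1$ to $\ell_p$, and then invoke the supremum ratio on each $g=\partial_i f$ to undo the heat flow. The only cosmetic difference is the order of the steps (the paper applies the supremum ratio first and reduces to bounding $\mathrm{Inf}^{(1)}(e^{-t\Delta}f)$, whereas you establish the pointwise Sarantopoulos bound first), but the content is identical.
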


\begin{proof}
For $t>0$, denote by
\begin{equation}
C_d(t) \eqdef \sup_{\mathrm{deg}(g)\leq d}  \frac{\|g\|_{L_p(\{-1,1\}^n;\C)}^p}{\|e^{-t\Delta}g\|_{L_p(\{-1,1\}^n;\C)}^p}
\end{equation}
and notice that
\begin{equation} \label{eq:useCd}
\mathrm{Inf}^{(p)} f  = \sum_{i=1}^n \|\partial_if\|^p_{L_p(\{-1,1\}^n;\C)} \leq C_d(t) \sum_{i=1}^n \|\partial_ie^{-t\Delta}f\|^p_{L_p(\{-1,1\}^n;\C)}.
\end{equation}
Moreover, we have
\begin{equation} \label{eq:boundCd}
\begin{split}
 \sum_{i=1}^n \|\partial_ie^{-t\Delta}f\|^p_{L_p(\{-1,1\}^n;\C)} \leq \Big(\sum_{i=1}^n \|\partial_ie^{-t\Delta}f\|_{L_1(\{-1,1\}^n;\C)}\Big) & \|e^{-t\Delta}f\|_{L_\infty(\{-1,1\}^n;\C)}^{p-1} 
\\ &\leq \|f\|_{L_\infty(\{-1,1\}^n;\C)}^{p-1} \mathrm{Inf}^{(1)}(e^{-t\Delta}f),
\end{split}
\end{equation}
where the last inequality follows from the contractivity of the heat semigroup. Therefore, it suffices to show that
\begin{equation} \label{eq:saranto2}
\mathrm{Inf}^{(1)} (e^{-t\Delta}f) \leq \frac{d}{\sqrt{e^{2t}-1}} \|f\|_{L_\infty(\{-1,1\}^n;\C)}.
\end{equation}
Let $F(x_1,\ldots,x_n)$ be the multilinear polynomial on $\R^n$ given by
\begin{equation}
\forall \ (x_1,\ldots,x_n)\in\R^n, \ \ \ F(x_1,\ldots,x_n) \eqdef \sum_{A\subseteq\{1,\ldots,n\}} \widehat{f}(A) \prod_{i\in A} x_i.
\end{equation}
Notice that for $\e\in\{-1,1\}^n$, we have $e^{-t\Delta}f(\e) = F(e^{-t}\e)$ and for $i\in\{1,\ldots,n\}$, $|\partial_i e^{-t\Delta} f(\e)| = e^{-t} |\partial_i F(\e)|$, where $\partial_i F$ is the usual partial derivative of $F$. Consequently,
\begin{equation} \label{eq:multilin}
\begin{split}
\mathrm{Inf}^{(1)}(e^{-t\Delta}f) = \frac{1}{2^n} & \sum_{\e\in\{-1,1\}^n} \sum_{i=1}^n |\partial_i e^{-t\Delta}f(\e)| \leq e^{-t} \max_{\e\in\{-1,1\}^n} \sum_{i=1}^n |\partial_i F(e^{-t}\e)|
\\& = e^{-t} \max_{\e,\delta\in\{-1,1\}^n} \sum_{i=1}^n \delta_i \partial_i F(e^{-t}\e) = e^{-t} \max_{\|x\|_\infty \leq e^{-t}, \|y\|_\infty \leq 1} \sum_{i=1}^n y_i \partial_i F(x),
\end{split}
\end{equation}
where in the last equality we used the multilinearity of the polynomial $\sum_{i=1}^n y_i \partial_i F(x)$ in the variables $(x,y)\in\R^{2n}$. By Sarantopoulos' vector valued Bernstein inequality from \cite{Sar91}, we deduce
\begin{equation} \label{eq:saranto}
\max_{\|x\|_\infty\leq e^{-t}, \|y\|_\infty \leq 1} \sum_{i=1}^n y_i \partial_i F(x) \leq \frac{d}{\sqrt{1-e^{-2t}}} \max_{\|x\|_\infty \leq 1} |F(x)| \leq \frac{d}{\sqrt{1-e^{-2t}}} \|f\|_{L_\infty(\{-1,1\}^n;\C)},
\end{equation}
again by the multilinearity of $F$. Therefore, \eqref{eq:multilin} and \eqref{eq:saranto} imply \eqref{eq:saranto2}, which combined with \eqref{eq:useCd} and \eqref{eq:boundCd} completes the proof of the lemma.
\end{proof}

Equipped with Lemma \ref{lem:usesarantopoulos}, we can prove Corollary \ref{cor:influences}.

\medskip

\noindent {\it Proof of Corollary \ref{cor:influences}.}
Fix $n\in\N$, $d\in\{1,\ldots,n\}$, $p\in\big(1,\frac{4}{3}\big)$ and let $f:\{-1,1\}^n\to \C$ be a function of degree at most $d$. Denote by
\begin{equation}
\eta_p \eqdef \frac{\pi}{2\pi - 2\arcsin\Big( \frac{2\sqrt{p-1}}{p}\Big)}.
\end{equation}
It follows from \eqref{eq:lowersmoothingR} and \eqref{eq:boundconformalbypower2} that there exists a constant $C_p\in(0,\infty)$ such that for every function $g:\{-1,1\}^n\to\C$ be a function of degree at most $d$,
\begin{equation}
\forall \ 0\leq t\leq1, \ \ \ \|e^{-t\Delta}g\|_{L_p(\{-1,1\}^n;\C)} \geq e^{-C_p t^{\eta_p}d}\|g\|_{L_p(\{-1,1\}^n;\C)}.
\end{equation}
Therefore, by Lemma \ref{lem:usesarantopoulos}, we conclude that
\begin{equation}
\mathrm{Inf}^{(p)} f \leq  \inf_{t>0} \frac{e^{pC_p t^{\eta_p}d}d}{\sqrt{e^{2t}-1}} \cdot \|f\|_{L_\infty(\{-1,1\}^n;\C)}^p.
\end{equation}
Choosing $t = d^{-1/\eta_p}$ and using that $\sqrt{e^{2t}-1}\geq \sqrt{2t}$, we deduce that there exists a constant $K_p\in(0,\infty)$ such that
\begin{equation} \label{eq:finalinfluencebound}
\mathrm{Inf}^{(p)} f \leq K_p d^{1+\frac{1}{2\eta_p}} \|f\|_{L_\infty(\{-1,1\}^n;\C)}^p = K_pd^{2-\frac{1}{\pi}\arcsin\big(\frac{2\sqrt{p-1}}{p}\big)} \|f\|_{L_\infty(\{-1,1\}^n;\C)}^p,
\end{equation}
which concludes the proof.
\hfill$\Box$

\medskip

Finally, we present the proof of Theorem \ref{thm:reversebernstein}.
\medskip

\noindent {\it Proof of Theorem \ref{thm:reversebernstein}.} Fix $n,d,m\in\N$ with $d+m\leq n$ and let $f:\{-1,1\}^n\to \C$ be a function of degree at most $d+m$ which is also in the $d$-th tail space. Then, by Lust-Piquard's lower Riesz transform inequalities \cite{LP98}, for every $p\in(1,\infty)$ there exists $c_p\in(0,\infty)$ such that
\begin{equation}
\|\nabla f\|_{L_p(\{-1,1\}^n;\C)} \stackrel{\eqref{eq:lust2}}{\geq} c_p \|\Delta^{1/2}f\|_{L_p(\{-1,1\}^n;\C)}.
\end{equation}
The conclusion of the theorem now follows from \eqref{eq:delta1/2}.
\hfill$\Box$

\bibliography{bernstein}

\newcommand{\etalchar}[1]{$^{#1}$}
\begin{thebibliography}{FHH{\etalchar{+}}14}

\bibitem[AA14]{AA14}
S.~Aaronson and A.~Ambainis.
\newblock The need for structure in quantum speedups.
\newblock {\em Theory Comput.}, 10:133--166, 2014.

\bibitem[BB14]{BB14}
A.~Ba\v{c}kurs and M.~Bavarian.
\newblock On the sum of {$L_1$} influences.
\newblock In {\em I{EEE} 29th {C}onference on {C}omputational
  {C}omplexity---{CCC} 2014}, pages 132--143. IEEE Computer Soc., Los Alamitos,
  CA, 2014.

\bibitem[BE95]{BE95}
P.~Borwein and T.~Erd\'{e}lyi.
\newblock {\em Polynomials and polynomial inequalities}, volume 161 of {\em
  Graduate Texts in Mathematics}.
\newblock Springer-Verlag, New York, 1995.

\bibitem[Bec75]{Bec75}
W.~Beckner.
\newblock Inequalities in {F}ourier analysis.
\newblock {\em Ann. of Math. (2)}, 102(1):159--182, 1975.

\bibitem[BELP08]{BELP08}
L.~Ben~Efraim and F.~Lust-Piquard.
\newblock Poincar\'{e} type inequalities on the discrete cube and in the {CAR}
  algebra.
\newblock {\em Probab. Theory Related Fields}, 141(3-4):569--602, 2008.

\bibitem[BGL14]{BGL14}
D.~Bakry, I.~Gentil, and M.~Ledoux.
\newblock {\em Analysis and geometry of {M}arkov diffusion operators}, volume
  348 of {\em Grundlehren der Mathematischen Wissenschaften}.
\newblock Springer, Cham, 2014.

\bibitem[Bon70]{Bon70}
A.~Bonami.
\newblock \'{E}tude des coefficients de {F}ourier des fonctions de
  {$L^{p}(G)$}.
\newblock {\em Ann. Inst. Fourier (Grenoble)}, 20(fasc. 2):335--402 (1971),
  1970.

\bibitem[Bor79]{Bor79}
C.~Borell.
\newblock On the integrability of {B}anach space valued {W}alsh polynomials.
\newblock In {\em S\'{e}minaire de {P}robabilit\'{e}s, {XIII}}, volume 721 of
  {\em Lecture Notes in Math.}, pages 1--3. Springer, Berlin, 1979.

\bibitem[Bor84]{Bor84}
C.~Borell.
\newblock On polynomial chaos and integrability.
\newblock {\em Probab. Math. Statist.}, 3(2):191--203, 1984.

\bibitem[Bou80]{Bou80}
J.~Bourgain.
\newblock Walsh subspaces of {$L^{p}$}-product spaces.
\newblock In {\em Seminar on {F}unctional {A}nalysis, 1979--1980 ({F}rench)},
  pages Exp. No. 4A, 9. \'{E}cole Polytech., Palaiseau, 1980.

\bibitem[EI18]{EI18}
A.~Eskenazis and P.~Ivanisvili.
\newblock Dimension independent {B}ernstein--{M}arkov inequalities in {G}auss
  space.
\newblock To appear in {\it J. Approx. Theory}. Preprint available at
  \url{https://arxiv.org/abs/1808.01273}, 2018.

\bibitem[EN20]{EN20}
A.~Eskenazis and A.~Naor.
\newblock Discrete {L}ittlewood--{P}aley--{S}tein theory and {P}isier's
  inequality for superreflexive targets.
\newblock Preprint, 2020.

\bibitem[Epp89]{Epp89}
J.~B. Epperson.
\newblock The hypercontractive approach to exactly bounding an operator with
  complex {G}aussian kernel.
\newblock {\em J. Funct. Anal.}, 87(1):1--30, 1989.

\bibitem[Erd20]{Erd18}
T.~Erd\'{e}lyi.
\newblock Reverse {M}arkov- and {B}ernstein-type inequalities for incomplete
  polynomials.
\newblock {\em J. Approx. Theory}, 251:105341, 2020.

\bibitem[FHH{\etalchar{+}}14]{Simons}
Y.~Filmus, H.~Hatami, S.~Heilman, E.~Mossel, R.~O'Donnell, S.~Sachdeva, A.~Wan,
  and K.~Wimmer.
\newblock Real {A}nalysis in {C}omputer {S}cience: a collection of open
  problems.
\newblock Preprint available at
  \url{https://simons.berkeley.edu/sites/default/files/openprobsmerged.pdf},
  2014.

\bibitem[FHKL16]{FHKL16}
Y.~Filmus, H.~Hatami, N.~Keller, and N.~Lifshitz.
\newblock On the sum of the {$L_1$} influences of bounded functions.
\newblock {\em Israel J. Math.}, 214(1):167--192, 2016.

\bibitem[Fre71]{Fre71}
G.~Freud.
\newblock A certain inequality of {M}arkov type.
\newblock {\em Dokl. Akad. Nauk SSSR}, 197:790--793, 1971.

\bibitem[HMO17]{HMO17}
S.~Heilman, E.~Mossel, and K.~Oleszkiewicz.
\newblock Strong contraction and influences in tail spaces.
\newblock {\em Trans. Amer. Math. Soc.}, 369(7):4843--4863, 2017.

\bibitem[HN13]{HN13}
T.~Hyt\"onen and A.~Naor.
\newblock Pisier's inequality revisited.
\newblock {\em Studia Math.}, 215(3):221--235, 2013.

\bibitem[IN19]{IN19}
P.~Ivanisvili and F.~Nazarov.
\newblock On {W}eissler's conjecture on the {H}amming cube {I}.
\newblock Preprint available at \url{https://arxiv.org/abs/1907.11359}, 2019.

\bibitem[IT19]{IT18}
P.~Ivanisvili and T.~Tkocz.
\newblock Comparison of moments of {R}ademacher chaoses.
\newblock {\em Ark. Mat.}, 57(1):121--128, 2019.

\bibitem[Jan97]{Jan97}
S.~Janson.
\newblock On complex hypercontractivity.
\newblock {\em J. Funct. Anal.}, 151(1):270--280, 1997.

\bibitem[Kah64]{Kah64}
J.-P. Kahane.
\newblock Sur les sommes vectorielles {$\sum \pm u_{n}$}.
\newblock {\em C. R. Acad. Sci. Paris}, 259:2577--2580, 1964.

\bibitem[Khi23]{Khi23}
A.~Khintchine.
\newblock \"{U}ber dyadische {B}r\"{u}che.
\newblock {\em Math. Z.}, 18(1):109--116, 1923.

\bibitem[Kra06]{Kra06}
S.~G. Krantz.
\newblock {\em Geometric function theory}.
\newblock Cornerstones. Birkh\"{a}user Boston, Inc., Boston, MA, 2006.
\newblock Explorations in complex analysis.

\bibitem[KW92]{KW92}
S.~Kwapie\'{n} and W.~A. Woyczy\'{n}ski.
\newblock {\em Random series and stochastic integrals: single and multiple}.
\newblock Probability and its Applications. Birkh\"{a}user Boston, Inc.,
  Boston, MA, 1992.

\bibitem[Kwa76]{Kwa76}
S.~Kwapie\'{n}.
\newblock A theorem on the {R}ademacher series with vector valued coefficients.
\newblock In {\em Probability in {B}anach spaces}, pages 157--158. Lecture
  Notes in Math., Vol. 526. Springer, Berlin, 1976.

\bibitem[LP98]{LP98}
F.~Lust-Piquard.
\newblock Riesz transforms associated with the number operator on the {W}alsh
  system and the fermions.
\newblock {\em J. Funct. Anal.}, 155(1):263--285, 1998.

\bibitem[Mau03]{Mau03}
B.~Maurey.
\newblock Type, cotype and {$K$}-convexity.
\newblock In {\em Handbook of the geometry of {B}anach spaces, {V}ol.\ 2},
  pages 1299--1332. North-Holland, Amsterdam, 2003.

\bibitem[Mey84]{Mey84}
P.-A. Meyer.
\newblock Transformations de {R}iesz pour les lois gaussiennes.
\newblock In {\em Seminar on probability, {XVIII}}, volume 1059 of {\em Lecture
  Notes in Math.}, pages 179--193. Springer, Berlin, 1984.

\bibitem[MN14]{MN14}
M.~Mendel and A.~Naor.
\newblock Nonlinear spectral calculus and super-expanders.
\newblock {\em Publ. Math. Inst. Hautes \'Etudes Sci.}, 119:1--95, 2014.

\bibitem[MP76]{MP76}
B.~Maurey and G.~Pisier.
\newblock S\'{e}ries de variables al\'{e}atoires vectorielles ind\'{e}pendantes
  et propri\'{e}t\'{e}s g\'{e}om\'{e}triques des espaces de {B}anach.
\newblock {\em Studia Math.}, 58(1):45--90, 1976.

\bibitem[MS86]{MS86}
V.~D. Milman and G.~Schechtman.
\newblock {\em Asymptotic theory of finite-dimensional normed spaces}, volume
  1200 of {\em Lecture Notes in Mathematics}.
\newblock Springer-Verlag, Berlin, 1986.
\newblock With an appendix by M. Gromov.

\bibitem[Nao12]{Nao12}
A.~Naor.
\newblock An introduction to the {R}ibe program.
\newblock {\em Jpn. J. Math.}, 7(2):167--233, 2012.

\bibitem[NS02]{NS02}
A.~Naor and G.~Schechtman.
\newblock Remarks on non linear type and {P}isier's inequality.
\newblock {\em J. Reine Angew. Math.}, 552:213--236, 2002.

\bibitem[O'D14]{O'D14}
R.~O'Donnell.
\newblock {\em Analysis of {B}oolean functions}.
\newblock Cambridge University Press, New York, 2014.

\bibitem[Ole03]{Ole03}
K.~Oleszkiewicz.
\newblock On a nonsymmetric version of the {K}hinchine-{K}ahane inequality.
\newblock In {\em Stochastic inequalities and applications}, volume~56 of {\em
  Progr. Probab.}, pages 157--168. Birkh\"{a}user, Basel, 2003.

\bibitem[Pis73]{Pis73}
G.~Pisier.
\newblock Sur les espaces de {B}anach qui ne contiennent pas uniform\'{e}ment
  de {$l^{1}_{n}$}.
\newblock {\em C. R. Acad. Sci. Paris S\'{e}r. A-B}, 277:A991--A994, 1973.

\bibitem[Pis78]{Pis78}
G.~Pisier.
\newblock Les in\'{e}galit\'{e}s de {K}hintchine-{K}ahane, d'apr\`es {C}.
  {B}orell.
\newblock In {\em S\'{e}minaire sur la {G}\'{e}om\'{e}trie des {E}spaces de
  {B}anach (1977--1978)}, pages Exp. No. 7, 14. \'{E}cole Polytech., Palaiseau,
  1978.

\bibitem[Pis82]{Pis82}
G.~Pisier.
\newblock Holomorphic semigroups and the geometry of {B}anach spaces.
\newblock {\em Ann. of Math. (2)}, 115(2):375--392, 1982.

\bibitem[Pis86]{Pis86}
G.~Pisier.
\newblock Probabilistic methods in the geometry of {B}anach spaces.
\newblock In {\em Probability and analysis ({V}arenna, 1985)}, volume 1206 of
  {\em Lecture Notes in Math.}, pages 167--241. Springer, Berlin, 1986.

\bibitem[Pis88]{Pis88}
G.~Pisier.
\newblock Riesz transforms: a simpler analytic proof of {P}.-{A}. {M}eyer's
  inequality.
\newblock In {\em S\'eminaire de {P}robabilit\'es, {XXII}}, volume 1321 of {\em
  Lecture Notes in Math.}, pages 485--501. Springer, Berlin, 1988.

\bibitem[Pis07]{Pis07}
G.~Pisier.
\newblock A remark on hypercontractive semigroups and operator ideals.
\newblock Preprint available at \url{https://arxiv.org/abs/0708.3423}, 2007.

\bibitem[Sar91]{Sar91}
Y.~Sarantopoulos.
\newblock Bounds on the derivatives of polynomials on {B}anach spaces.
\newblock {\em Math. Proc. Cambridge Philos. Soc.}, 110(2):307--312, 1991.

\bibitem[Ste70]{Ste70}
E.~M. Stein.
\newblock {\em Topics in harmonic analysis related to the {L}ittlewood-{P}aley
  theory}.
\newblock Annals of Mathematics Studies, No. 63. Princeton University Press,
  Princeton, N.J.; University of Tokyo Press, Tokyo, 1970.

\bibitem[Sze25]{Sze25}
G.~Szeg\"{o}.
\newblock \"{U}ber einen {S}atz von {A}. {M}arkoff.
\newblock {\em Math. Z.}, 23(1):45--61, 1925.

\bibitem[Tal93]{Tal93}
M.~Talagrand.
\newblock Isoperimetry, logarithmic {S}obolev inequalities on the discrete
  cube, and {M}argulis' graph connectivity theorem.
\newblock {\em Geom. Funct. Anal.}, 3(3):295--314, 1993.

\bibitem[Wag00]{Wag00}
R.~Wagner.
\newblock Notes on an inequality by {P}isier for functions on the discrete
  cube.
\newblock In {\em Geometric aspects of functional analysis}, volume 1745 of
  {\em Lecture Notes in Math.}, pages 263--268. Springer, Berlin, 2000.

\bibitem[Wei79]{Wei79}
F.~B. Weissler.
\newblock Two-point inequalities, the {H}ermite semigroup, and the
  {G}auss-{W}eierstrass semigroup.
\newblock {\em J. Funct. Anal.}, 32(1):102--121, 1979.

\bibitem[Wol07]{Wol07}
P.~Wolff.
\newblock Hypercontractivity of simple random variables.
\newblock {\em Studia Math.}, 180(3):219--236, 2007.

\end{thebibliography}
\bibliographystyle{alpha}
\nocite{*}

\end{document}